\documentclass[11pt,reqno]{amsart}
\usepackage{amsmath,amssymb,amsthm,mathtools}
\usepackage{microtype}
\usepackage{enumitem}
\usepackage{booktabs}
\usepackage{array}
\usepackage{xcolor}
\usepackage{tikz}
\usepackage{float}
\usetikzlibrary{arrows.meta,positioning}
\usepackage[ruled,vlined,norelsize,noend]{algorithm2e}
\numberwithin{equation}{section}
\numberwithin{algocf}{section}
\usepackage[hidelinks]{hyperref}

\usepackage[nameinlink,capitalise,noabbrev]{cleveref}
\SetFuncSty{texttt}
\SetKw{Return}{return}
\crefname{algocf}{algorithm}{algorithms}
\Crefname{algocf}{Algorithm}{Algorithms}

\newtheorem{theorem}{Theorem}[section]
\newtheorem{lemma}[theorem]{Lemma}
\newtheorem{proposition}[theorem]{Proposition}
\newtheorem{corollary}[theorem]{Corollary}
\theoremstyle{definition}
\newtheorem{definition}[theorem]{Definition}
\theoremstyle{remark}
\newtheorem{remark}[theorem]{Remark}

\newcommand{\M}{\mathcal{M}}
\newcommand{\dist}{\operatorname{dist}}
\newcommand{\diam}{\operatorname{diam}}
\newcommand{\gen}{\operatorname{gen}}
\newcommand{\lvl}{\operatorname{lvl}}
\newcommand{\refedge}{e}
\newcommand{\mgen}{\operatorname{mgen}}
\newcommand{\card}{\#}

\newcommand{\BisectTet}{\textnormal{\texttt{BisectTet}}}
\newcommand{\BisectTets}{\textnormal{\texttt{BisectTets}}}
\newcommand{\RefineToConformity}{\textnormal{\texttt{RefineToConformity}}}
\SetKwBlock{ConformityBlock}{\RefineToConformity:}{}
\newcommand{\BanschBisectTet}{\textnormal{\texttt{B\"anschBisectTet}}}
\newcommand{\BanschBisectTets}{\textnormal{\texttt{B\"anschBisectTets}}}
\newcommand{\BanschRefine}{\textnormal{\texttt{B\"anschRefine}}}
\makeatletter
\newenvironment{fixedfigure}
  {\par\medskip\noindent\begin{minipage}{\linewidth}\def\@captype{figure}\centering}
  {\end{minipage}\par\medskip}
\makeatother

\title[Closure complexity for AMP and B\"ansch]
{Closure complexity of
B\"ansch-type algorithms for tetrahedral mesh refinement}
\author[Y. Li]{Yuwen Li}
\address{School of Mathematical Sciences, Zhejiang University,
866 Yuhangtang Road, Hangzhou, Zhejiang 310058, People's Republic of China}
\email{liyuwen@zju.edu.cn}
\author[Z. Yang]{Zhiyuan Yang}
\address{School of Mathematical Sciences, Zhejiang University,
866 Yuhangtang Road, Hangzhou, Zhejiang 310058, People's Republic of China}
\email{zhiyuanyang@zju.edu.cn}
\keywords{tetrahedral bisection, Arnold--Mukherjee--Pouly algorithm, B\"ansch algorithm, closure complexity estimate, horizontal propagation, adaptive finite element method}
\subjclass[2020]{65N50, 65Y20, 65N30}
\date{August 2026}

\begin{document}

\begin{abstract}
We prove, to our knowledge, the first unconditional cumulative closure
estimates for the Arnold--Mukherjee--Pouly (AMP) refinement algorithm and the
original face-marked tetrahedral algorithm of B\"ansch on arbitrary conforming
initial tetrahedral meshes.  Let
$\mathcal T_0,\ldots,\mathcal T_L$ be an adaptive mesh sequence generated by
either algorithm, with $\M_\ell\subseteq\mathcal T_\ell$ denoting the marking
set at step $\ell$.  Then
\[
  \card\mathcal{T}_L-\card\mathcal{T}_0
  \le C_{\mathrm{clos}}(\mathcal{T}_0)
      \sum_{\ell=0}^{L-1}\card\M_\ell.
\]
The proof is intrinsic to the physical three-dimensional mesh and requires neither an
initial compatibility condition nor a higher-dimensional embedding.  It
organizes conformity refinements into a causal forest and combines a uniform
horizontal-propagation estimate with a weighted packing argument to obtain an
explicit closure constant.  For the original B\"ansch algorithm, every
history-dependent resolution of the initial two-edge ambiguity is represented
by one of finitely many AMP histories.  The estimate therefore holds uniformly
for arbitrary deterministic or nondeterministic choices.  This resolves a
long-standing complexity question for the B\"ansch--AMP family.
\end{abstract}

\maketitle

\section{Introduction}\label{sec:introduction}

Adaptive finite element methods (AFEMs) rely on local mesh refinement to
concentrate computational effort near singularities, layers, interfaces, and
other fine-scale features.  A standard adaptive feedback loop is of the form
\[
\texttt{Solve}\rightarrow\texttt{Estimate}\rightarrow\texttt{Mark}\rightarrow\texttt{Refine}.
\]
Convergence and optimality of AFEMs have been established in
\cite{Dorfler1996,MNS2002,BDD2004,Stevenson2007,CasconEtAl2008,CFPP2014}.
Besides conformity and uniform shape regularity, optimal complexity of AFEMs
requires the \texttt{Refine} routine to satisfy a quantitative complexity
property.  In particular, conforming completion may create more elements than
were explicitly marked, but its cumulative overhead must remain proportional
to the cumulative number of marked elements.  Let $\M_\ell$ be the marking set
at the $\ell$th AFEM iteration, namely the set of elements selected for
refinement, and
\[
 \mathcal{T}_{\ell+1}=\texttt{Refine}(\mathcal{T}_\ell,\M_\ell),
 \qquad \ell=0,\ldots,L-1
\]
be a sequence of locally refined meshes output by an AFEM\@.
The desired closure estimate has the form
\begin{equation}\label{eq:introclosure}
 \card\mathcal{T}_L-\card\mathcal{T}_0
 \le C_{\mathrm{clos}}(\mathcal{T}_0)
      \sum_{\ell=0}^{L-1}\card\M_\ell.
\end{equation}
This closure estimate is one of the foundations of rate-optimal AFEM
theory~\cite{BDD2004,Stevenson2008,BonitoEtAl2024}.  In three dimensions, its
proof is particularly delicate because one bisection may launch a long chain
of conformity refinements through faces and edge stars, and repeated calls may
revisit the same physical region at many scales.

In modern AFEMs, bisection has become a standard local-refinement mechanism
because it preserves nestedness and reduces mesh management to a small number
of local operations.  For triangular meshes, longest-edge
bisection~\cite{RosenbergStenger1975,Rivara1984,RivaraVenere1996,SuarezPlazaCarey2005}
and newest-vertex bisection~\cite{Sewell1972,Mitchell1989,Mitchell2016} are the
predominant bisection algorithms.  The passage to tetrahedral and simplicial
meshes produced several distinct refinement paradigms.  B\"ansch proposed a
face-marked bisection method with an ambiguity in two and three
dimensions~\cite{Bansch1991} that works on arbitrary initial meshes. Liu and Joe proposed a related bisection algorithm with uniform shape
regularity and local grading \cite{LiuJoe1995}.
Arnold--Mukherjee--Pouly (AMP) later recast the B\"ansch rule in a particularly
transparent marked-tetrahedron data structure and proved conformity,
termination, and finitely many similarity classes~\cite{AMP2000}.

In a different paradigm, Kossaczk\'y
developed a recursive newest-vertex-type construction in two and three
dimensions under restrictions on the initial mesh~\cite{Kossaczky1994}.
Maubach and Traxler extended this construction to arbitrary dimension by
assigning an ordered vertex list to every simplex, which hinges on compatibility across neighboring simplices and imposes
a structural condition on the coarse mesh~\cite{Maubach1995,Traxler1997}.
By contrast, B\"ansch--AMP is face-based: the input data are edge marks
intrinsic to the triangular faces, and in three dimensions these marks can be
initialized directly on any conforming tetrahedral mesh from a single global
edge order, without a compatible simplex-ordering construction.

Closure-complexity estimates for bisection algorithms developed along a
somewhat different line.  Binev, Dahmen, and DeVore~\cite{BDD2004} proved the
decisive cumulative estimate for two-dimensional newest-vertex bisection under
an initial compatibility condition.  Stevenson extended such estimates to
arbitrary dimension under a matching-neighbor
condition~\cite{Stevenson2008}.  In two dimensions, Karkulik, Pavlicek, and
Praetorius proved that newest-vertex bisection satisfies
\eqref{eq:introclosure} on every conforming initial triangular mesh, without
any compatibility assumption~\cite{KPP2013,KPP2015Erratum}.

In dimensions higher than two, the usual matching-neighbor assumptions may
not hold on general triangulations.  Alk\"amper--Gaspoz--Kl\"ofkorn (AGK)
introduced a weak compatibility condition and a linear-time relabeling
procedure that ensures termination of iterative refinement in arbitrary
dimension~\cite{AGK2018}.  Gehring subsequently proved
\eqref{eq:introclosure} for every initial labeling produced by the AGK
initialization algorithm~\cite{Gehring2023}.  Most recently,
Diening--Gehring--Storn (DGS) supplied the Maubach routine with an
initialization valid for every conforming initial triangulation and proved
closure by relating the physical mesh to a suitably colored triangulation in
a higher-dimensional space~\cite{DGS}.

Existing closure-complexity analyses follow the line of Maubach--Traxler.
The B\"ansch--AMP family occupies a distinctive place in this history.  The
B\"ansch algorithm, introduced in 1991, acts directly on a given conforming tetrahedral mesh,
without a coloring or matching-neighbor assumption~\cite{Bansch1991}. The AMP algorithm is its
later finite-state realization.  Indeed, every initial mesh can be
initialized by marking each face with its maximal edge in one global edge
order.  Thus no preliminary refinement or global compatibility construction
is required.  This unusually simple initialization is one of the principal
advantages of B\"ansch--AMP\@.

Despite that early generality, the B\"ansch--AMP closure complexity of the form
\eqref{eq:introclosure} has remained an open question since the original
works~\cite{Bansch1991,AMP2000}.  The present manuscript resolves this
long-standing problem in modern AFEM optimality theory.  To our
knowledge, it gives the first unconditional cumulative closure estimates for
B\"ansch and AMP on arbitrary conforming initial tetrahedral meshes, including
the nondeterministic choice allowed by the original B\"ansch formulation.  The
proof is intrinsic to the physical three-dimensional mesh: it uses neither a
higher-dimensional embedding nor an initial compatibility hypothesis.  The
proof requires a new causal analysis of conformity propagation rather than a direct
application of classical newest-vertex-bisection completion theory.

The main contributions are the following.
\begin{enumerate}[label=\textnormal{(\roman*)},leftmargin=2.4em]
\item We construct intrinsic marked-face refinement trees and a causal forest
for conformity events.  The construction gives the macro-generation
monotonicity needed to control propagation.  A finite analysis of the five
physical edge classes, two transition tables, and vertex- and edge-star bounds
then proves uniform horizontal propagation within a fixed macro-generation.
Writing $\nu=\nu(\mathcal T_0)$ for the resulting uniform macro-vertex valence,
the bound is
\[
 H_{\mathrm{AMP}}
 =\max\{448\,\card\mathcal{T}_0,630\,\nu\}.
\]

\item We convert horizontal boundedness into geometric locality by showing
that every causal descendant remains in a controlled same-scale neighborhood
of its root event.  Let $c_V$ and $C_h$ denote, respectively, the lower
volume-scale and upper diameter-scale constants from \Cref{lem:scale}.  An
optimized self-contained weighted packing argument then charges the resulting
refinement cost to the marked elements and yields
\[
 C_{\mathrm{clos}}(\mathcal{T}_0)
 \le36\,\frac{\pi C_h^3}{c_V}
 (4H_{\mathrm{AMP}}+1)(4H_{\mathrm{AMP}}+3)^2.
\]

\item We resolve the two-edge choice in the original B\"ansch formulation.
Every B\"ansch child has a unique global refinement edge, and a
deferred-resolution argument represents every history-dependent B\"ansch
history by one of finitely many AMP histories.  The AMP closure estimate
therefore transfers uniformly to arbitrary deterministic or nondeterministic
choices.
\end{enumerate}

The result is three-dimensional: both the AMP marking rules and the finite
transition analysis are specific to tetrahedra.  \Cref{sec:amp-algorithm}
restates the algorithm and its three-bisection macrostructure.
\Cref{sec:amp-analysis} develops the intrinsic causal structure.
\Cref{sec:horizontal-propagation} proves the horizontal-propagation bound by
a finite macrostep analysis, and \Cref{sec:amp-closure} derives locality and
closure from that bound.
Finally, \Cref{sec:bansch-analysis} states the B\"ansch rules intrinsically and
derives the uniform closure consequence.
\Cref{sec:numerics} reports the numerical experiments.

\section{The AMP refinement algorithm}
\label{sec:amp-algorithm}

This section fixes the AMP data structure and refinement rules used throughout
the paper.  The material is restated from
Arnold--Mukherjee--Pouly~\cite{AMP2000}, with notation adapted
to the present analysis.  Their marked-tetrahedron formulation is essentially
equivalent to the earlier face-marked algorithm of B\"ansch once one
refinement edge is selected in the ambiguous two-edge case.  We return to the original B\"ansch formulation in
\Cref{sec:bansch-analysis}.

\subsection{Tetrahedral meshes and marked tetrahedra}

Let $\mathcal{T}_0$ be an arbitrary conforming tetrahedral triangulation of a polyhedral
domain $\Omega\subset\mathbb R^3$.  A tetrahedral mesh $\mathcal{T}$ is a finite
collection of closed tetrahedra such that distinct tetrahedra have disjoint
interiors and $\bigcup_{K\in\mathcal{T}}K=\overline\Omega$.  For a tetrahedron or a
mesh $X$, write $\mathcal{V}(X)$, $\mathcal{E}(X)$, and $\mathcal{F}(X)$ for its sets of vertices,
edges, and faces.  The mesh is \emph{conforming} if the intersection of two
distinct tetrahedra is empty or is a common vertex, a common edge, or a common
face.

\begin{definition}[Marked tetrahedron]\label{def:markedtetrahedron}
A marked tetrahedron $K$ consists of its four vertices, a refinement edge
$\refedge(K)\in\mathcal{E}(K)$, a marked edge $\mu_F\in\mathcal{E}(F)$ for every
$F\in\mathcal{F}(K)$, and a Boolean flag.  The two faces containing
$\refedge(K)$ are the \emph{refinement faces}, and on each of them
$\mu_F=\refedge(K)$.  The remaining two faces are the
\emph{nonrefinement faces}.  The flag may be set only when the four face marks
are coplanar.
\end{definition}

The marked edges of the two nonrefinement faces are each either adjacent or
opposite to the refinement edge.  This gives the AMP types:
\begin{itemize}[leftmargin=2.1em]
\item type $P$ (planar), if all face marks are coplanar; the type is denoted by
      $P_u$ or $P_f$ according as the flag is unset or set;
\item type $A$ (adjacent), if the marked edges of both nonrefinement faces meet the refinement edge
      but the four marks are not coplanar;
\item type $O$ (opposite), if neither nonrefinement face has its marked edge
      meeting the refinement edge;
\item type $M$ (mixed), if the marked edge of exactly one nonrefinement face meets the refinement
      edge.
\end{itemize}
Thus $P_f$ is the only type whose flag is set; the flags of $P_u,A,O$, and $M$
are unset.
\Cref{fig:AMPtypes} displays one representative of each type both as a marked
tetrahedron and as an unfolded face net.

\begin{figure}[!ht]
\centering
\begin{tikzpicture}[
  faceedge/.style={line width=.38pt},
  facemark/.style={line width=1.15pt},
  tetedge/.style={line width=.45pt,line join=round},
  tethidden/.style={densely dashed,line width=.40pt},
  tetmark/.style={line width=1.15pt,line cap=round},
  dualmark/.style={double=white,double distance=.75pt,line width=.42pt,
                   line cap=round},
  every node/.style={font=\footnotesize}
]
\newcommand{\AMPtetra}{%
  \coordinate (ta) at (-.72,-.22);
  \coordinate (tb) at ( .00,-.60);
  \coordinate (tc) at ( .04, .80);
  \coordinate (td) at ( .72,-.18);
  \draw[tethidden] (ta)--(td);
  \draw[tetedge] (ta)--(tb)--(td);
  \draw[tetedge] (tc)--(ta) (tc)--(tb) (tc)--(td);
}
\newcommand{\AMPnet}{%
  \draw[faceedge] (-.55,0)--(.55,0)--(0,-.95)--cycle;
  \draw[faceedge] (-.55,0)--(0,.95)--(.55,0);
  \draw[faceedge] (-.55,0)--(-1.10,-.95)--(0,-.95);
  \draw[faceedge] (.55,0)--(1.10,-.95)--(0,-.95);
}
\newcommand{\AMPr}{%
  \draw[facemark] (-.47,.07)--(.47,.07);
  \draw[facemark] (-.47,-.07)--(.47,-.07);
}
\newcommand{\AMPac}{\draw[facemark] (-.57,-.10)--(-.10,-.90);}
\newcommand{\AMPbc}{\draw[facemark] (.57,-.10)--(.10,-.90);}
\newcommand{\AMPbd}{\draw[facemark] (.53,-.10)--(1.00,-.90);}
\newcommand{\AMPcdleft}{\draw[facemark] (-1.00,-.88)--(-.10,-.88);}
\newcommand{\AMPcdright}{\draw[facemark] (.10,-.88)--(1.00,-.88);}

\begin{scope}[xshift=0cm]
  \begin{scope}[yshift=1.15cm,scale=1.15]
    \AMPtetra
    \draw[dualmark] (ta)--(tb);
    \draw[tetmark] (ta)--(tc) (tb)--(tc);
  \end{scope}
  \begin{scope}[yshift=-.65cm]
    \AMPnet\AMPr\AMPac\AMPbc
  \end{scope}
  \node at (0,-1.78) {$P$ ($P_u$ or $P_f$)};
\end{scope}
\begin{scope}[xshift=3.0cm]
  \begin{scope}[yshift=1.15cm,scale=1.15]
    \AMPtetra
    \draw[dualmark] (ta)--(tb);
    \draw[tetmark] (ta)--(tc) (tb)--(td);
  \end{scope}
  \begin{scope}[yshift=-.65cm]
    \AMPnet\AMPr\AMPac\AMPbd
  \end{scope}
  \node at (0,-1.78) {$A$};
\end{scope}
\begin{scope}[xshift=6.0cm]
  \begin{scope}[yshift=1.15cm,scale=1.15]
    \AMPtetra
    \draw[dualmark] (ta)--(tb);
    \draw[dualmark] (tc)--(td);
  \end{scope}
  \begin{scope}[yshift=-.65cm]
    \AMPnet\AMPr\AMPcdleft\AMPcdright
  \end{scope}
  \node at (0,-1.78) {$O$};
\end{scope}
\begin{scope}[xshift=9.0cm]
  \begin{scope}[yshift=1.15cm,scale=1.15]
    \AMPtetra
    \draw[dualmark] (ta)--(tb);
    \draw[tetmark] (ta)--(tc);
    \draw[tetmark] (tc)--(td);
  \end{scope}
  \begin{scope}[yshift=-.65cm]
    \AMPnet\AMPr\AMPac\AMPcdright
  \end{scope}
  \node at (0,-1.78) {$M$};
\end{scope}
\end{tikzpicture}
\caption{The four AMP types, shown as marked tetrahedra (top) and unfolded face
nets (bottom).  Thick strokes are face marks, with double strokes for an edge
marked on both incident faces.  The states $P_u$
and $P_f$ differ only in the Boolean flag.}
\label{fig:AMPtypes}
\end{figure}

An \emph{AMP marked mesh} is a tetrahedral mesh whose tetrahedra carry the
marked-tetrahedron structure of \Cref{def:markedtetrahedron}.  We use the same
symbol for the marked mesh and its underlying tetrahedral collection whenever
only membership or cardinality is concerned.  It is \emph{conformingly marked}
if its underlying mesh is conforming and the markings induced by two incident
tetrahedra on any shared face $F$ agree; that is, both assign the same edge
$\mu_F$ to $F$.

\subsection{The marked-tetrahedron bisection}

Fix $K=[v_0,v_1,v_2,v_3]$ with refinement edge $[v_0,v_1]$, and write
$m:=\operatorname{mid}([v_0,v_1])$.  In the rules below, an inherited face is
a face of $K$, a cut face is a proper subtriangle of a face of $K$, and the
new face common to the children is $[m,v_2,v_3]$.
\begin{enumerate}[label=\textnormal{(B\arabic*)},leftmargin=2.8em]
\item The inherited face keeps its marked edge from $K$; this edge is the
      refinement edge of the child.
\item A cut face is marked by its edge opposite the new vertex $m$.
\item The common new face is marked by the edge opposite the new vertex $m$,
      except when $K$ is of type $P_f$; then it is marked by $[m,w]$, where
      $w\in\{v_2,v_3\}$ is the common old endpoint of the two child
      refinement edges.
\item The child flags are set if and only if $K$ is of type $P_u$.
\end{enumerate}

\begin{algorithm}[H]
\caption{One AMP marked-tetrahedron bisection \BisectTet}
\label{alg:bisecttet}
\KwIn{A marked tetrahedron $K$ with refinement edge $[v_0,v_1]$}
\KwOut{the two marked children $\{K_0, K_1\}=\BisectTet(K)$}
$m\gets\operatorname{mid}([v_0,v_1])$\;
$K_0\gets[v_0,m,v_2,v_3]$ and
$K_1\gets[m,v_1,v_2,v_3]$\;
Assign marks and flags according to
\textnormal{(B1)}--\textnormal{(B4)}\;
\Return $\{K_0,K_1\}$\;
\end{algorithm}

\begin{fixedfigure}
\begin{tikzpicture}[scale=1.0,
  every node/.style={font=\small},
  oldedge/.style={line width=1.2pt},
  newedge/.style={densely dashed,line width=0.9pt}]
\coordinate (v0) at (-2.2,-0.8);
\coordinate (v1) at ( 2.2,-0.8);
\coordinate (v2) at ( 0.8, 1.5);
\coordinate (v3) at (-0.1, 0.15);
\coordinate (m)  at ( 0.0,-0.8);
\draw (v0)--(v2)--(v1)--cycle;
\draw (v0)--(v3)--(v1);
\draw[densely dashed] (v3)--(v2);
\draw[oldedge] (v0)--(v1);
\fill (v0) circle (1.3pt) node[below left] {$v_0$};
\fill (v1) circle (1.3pt) node[below right] {$v_1$};
\fill (v2) circle (1.3pt) node[above] {$v_2$};
\fill (v3) circle (1.3pt) node[above left] {$v_3$};
\fill (m) circle (1.7pt) node[below] {$m$};
\draw[newedge] (m)--(v2) (m)--(v3);
\node[align=center] at (0,-1.65)
 {$K_0=[v_0,m,v_2,v_3]$\qquad$K_1=[m,v_1,v_2,v_3]$};
\end{tikzpicture}
\caption{Bisection of a tetrahedron along its refinement edge.  The dashed
segments from $m$ are the new edges; the two children share the new face
$[m,v_2,v_3]$.}
\label{fig:bisecttet}
\end{fixedfigure}

For an explicit instance of the four rules, take a type-$P_u$ tetrahedron
with unset flag and face marks
\[
 \begin{aligned}
 \mu_{[v_0,v_1,v_2]}&=\mu_{[v_0,v_1,v_3]}=[v_0,v_1],\\
 \mu_{[v_0,v_2,v_3]}&=[v_0,v_2],\qquad \mu_{[v_1,v_2,v_3]}=[v_1,v_2].
 \end{aligned}
\]
The child data are listed in \Cref{tab:Pu-bisection}.
\begin{table}[!ht]
\centering
\small
\renewcommand{\arraystretch}{1.15}
\begin{tabular}{@{}c@{\qquad}cc@{\qquad}cc@{}}
\toprule
& \multicolumn{2}{c}{$K_0=[v_0,m,v_2,v_3]$}
& \multicolumn{2}{c}{$K_1=[m,v_1,v_2,v_3]$}\\
\cmidrule(lr){2-3}\cmidrule(l){4-5}
face kind & face & mark & face & mark\\
\midrule
inherited & $[v_0,v_2,v_3]$ & $[v_0,v_2]$
          & $[v_1,v_2,v_3]$ & $[v_1,v_2]$\\
cut       & $[v_0,m,v_2]$   & $[v_0,v_2]$
          & $[m,v_1,v_2]$   & $[v_1,v_2]$\\
cut       & $[v_0,m,v_3]$   & $[v_0,v_3]$
          & $[m,v_1,v_3]$   & $[v_1,v_3]$\\
new       & $[m,v_2,v_3]$   & $[v_2,v_3]$
          & $[m,v_2,v_3]$   & $[v_2,v_3]$\\
flag/type & \multicolumn{2}{c}{set; $P_f$}
          & \multicolumn{2}{c}{set; $P_f$}\\
\bottomrule
\end{tabular}
\caption{The rules \textnormal{(B1)}--\textnormal{(B4)} for a representative
$P_u\to P_f$ bisection.}
\label{tab:Pu-bisection}
\end{table}

Both children have the same type.  The complete type transition is shown in
\Cref{fig:AMPtypetransition}.  In particular, types $M$ and $O$ occur only in
the initial transient.

\begin{fixedfigure}
\begin{tikzpicture}[
  every node/.style={font=\small},
  flow/.style={->,line width=.45pt}
]
\node (A)  at (0,1.15) {$A$};
\node (Pf) at (2.60,1.15) {$P_f$};
\node (Pu) at (1.30,0) {$P_u$};
\node (M)  at (0,-1.15) {$M$};
\node (O)  at (2.60,-1.15) {$O$};
\draw[flow] (A)--(Pu);
\draw[flow] (Pu)--(Pf);
\draw[flow] (Pf)--(A);
\draw[flow] (M)--(Pu);
\draw[flow] (O)--(Pu);
\end{tikzpicture}
\caption{The AMP type transition under one bisection.  Each arrow points from
the parent type to the common type of its two children.}
\label{fig:AMPtypetransition}
\end{fixedfigure}

Repeated bisection organizes the descendants of each initial tetrahedron into
a binary refinement tree.  The \emph{generation} $\gen(K)$ of a descendant
$K$ is its depth in this tree, namely the number of bisections from its unique
ancestor in $\mathcal{T}_0$.  Thus every bisection increases generation by one and
halves volume.  At an intermediate stage, an \emph{active tetrahedron} is a
leaf of one of these trees: it has been created and has not itself been
bisected.  The active tetrahedra form the current, possibly nonconforming,
mesh $\mathcal{T}$.  A current mesh vertex $z\in\mathcal{V}(\mathcal{T})$ is a
\emph{hanging vertex} of an active tetrahedron $K\in\mathcal{T}$ if
\[
 z\in\partial K\setminus\mathcal{V}(K).
\]
Equivalently, $z$ lies in the relative interior of an edge or a face of $K$;
an arbitrary point there is not a hanging vertex.

\begin{fixedfigure}
\begin{tikzpicture}[
  every node/.style={font=\footnotesize},
  tetedge/.style={line width=.45pt,line join=round},
  hidden/.style={densely dashed,black!55,line width=.45pt,line join=round},
  refedge/.style={line width=1.05pt,line cap=round},
  newedge/.style={line width=.85pt,line cap=round}
]
\begin{scope}[xshift=-2.85cm,yshift=.18cm]
  \coordinate (a) at (-1.55,0);
  \coordinate (b) at ( 1.55,0);
  \coordinate (c) at ( .35,.75);
  \coordinate (p) at (-.25,1.65);
  \coordinate (q) at (-.35,-.95);

  \fill[black!4] (a)--(b)--(c)--cycle;
  \draw[hidden] (q)--(a) (q)--(b) (q)--(c);
  \draw[tetedge] (a)--(c)--(b);
  \draw[tetedge] (p)--(a) (p)--(b) (p)--(c);
  \draw[refedge] (a)--(b);

  \fill (a) circle (1.15pt) node[below left] {$a$};
  \fill (b) circle (1.15pt) node[below right] {$b$};
  \fill (c) circle (1.15pt) node[right] {$c$};
  \fill (p) circle (1.15pt) node[above] {$p$};
  \fill (q) circle (1.15pt) node[below] {$q$};

  \node[anchor=east] at (-.82,1.28) {$K^+$};
  \node[anchor=west] at (.18,-.78) {$K^-$};
  \node[align=center] at (0,-1.60)
    {\textnormal{(a)} before bisection};
\end{scope}

\begin{scope}[xshift=2.85cm,yshift=.18cm]
  \coordinate (a) at (-1.55,0);
  \coordinate (b) at ( 1.55,0);
  \coordinate (c) at ( .35,.75);
  \coordinate (p) at (-.25,1.65);
  \coordinate (q) at (-.35,-.95);
  \coordinate (m) at (0,0);

  \fill[black!3] (a)--(m)--(c)--cycle;
  \fill[black!7] (m)--(b)--(c)--cycle;
  \draw[hidden] (q)--(a) (q)--(b) (q)--(c);
  \draw[tetedge] (a)--(c)--(b);
  \draw[tetedge] (p)--(a) (p)--(b) (p)--(c);
  \draw[tetedge] (a)--(m)--(b);
  \draw[newedge] (m)--(c) (m)--(p);

  \fill (a) circle (1.15pt) node[below left] {$a$};
  \fill (b) circle (1.15pt) node[below right] {$b$};
  \fill (c) circle (1.15pt) node[right] {$c$};
  \fill (p) circle (1.15pt) node[above] {$p$};
  \fill (q) circle (1.15pt) node[below] {$q$};
  \fill (m) circle (1.55pt) node[below right] {$m$};

  \node[align=right,anchor=east] at (-.58,1.38)
    {children\\
     of $K^+$};
  \node[anchor=west] at (.18,-.78) {$K^-$};
  \node[align=center] at (0,-1.60)
    {\textnormal{(b)} after bisection; $m$ is hanging for $K^-$};
\end{scope}
\end{tikzpicture}

\caption{A hanging vertex created on a tetrahedral interface.  Initially,
$K^+=[a,b,c,p]$ and $K^-=[a,b,c,q]$ share the face $[a,b,c]$, and $[a,b]$
is the refinement edge of $K^+$.  After $K^+$ is bisected, its midpoint $m$
is a vertex of the two children but not of $K^-$.  Dashed segments indicate
edges of $K^-$ lying behind the shared face.}
\label{fig:hangingvertex}
\end{fixedfigure}

In \Cref{fig:hangingvertex}, $K^-$ and the two children of $K^+$ are all
active, but only $K^-$ has $m$ as a hanging vertex.  Thus the active
tetrahedra with hanging vertices form the subset selected in the next
conformity batch.

\subsection{Conforming initialization and local refinement}

An \emph{initial AMP marked mesh} is a finite conformingly marked AMP mesh whose flags
are all unset.  A convenient canonical initialization imposes one strict
global order on all edges of $\mathcal{T}_0$.  The maximal edge of each tetrahedron is
its refinement edge, the maximal edge of each face is its marked edge, and
every flag is unset.  A shared face therefore receives the same mark from both
sides.  No coloring or matching-neighbor condition is imposed on $\mathcal{T}_0$.
Henceforth, dependence on an initial AMP marked mesh $\mathcal{T}_0$ includes its fixed
refinement edges, face marks, and flags; the global edge order is only one way
to construct these data.

Let $\mathcal{T}$ be a current AMP marked mesh and let $\mathcal S\subseteq\mathcal{T}$.  We write
\BisectTets{} for the routine that replaces every tetrahedron of $\mathcal S$
by the two children returned by \Cref{alg:bisecttet}; all other tetrahedra are
unchanged.

\begin{algorithm}[H]
\caption{Simultaneous AMP bisection \BisectTets}
\label{alg:bisecttets}
\KwIn{A current AMP marked mesh $\mathcal{T}$ and a set $\mathcal S\subseteq\mathcal{T}$}
\KwOut{the AMP marked mesh $\BisectTets(\mathcal{T},\mathcal S)$ obtained by bisecting
all tetrahedra in $\mathcal S$}

$\widehat{\mathcal{T}}\gets\mathcal{T}\setminus\mathcal S$\;

\For{$K\in\mathcal S$}{
  $\{K_1, K_2\}\gets\BisectTet(K)$\;
  $\widehat{\mathcal{T}}\gets\widehat{\mathcal{T}}\cup\{K_1,K_2\}$\;
}

\Return $\widehat{\mathcal{T}}$\;
\end{algorithm}

The AMP local-refinement procedure~\cite{AMP2000} is summarized in
\Cref{alg:localrefine}.

\begin{algorithm}[H]
\caption{One AMP refinement call \texttt{RefineAMP}}
\label{alg:localrefine}
\KwIn{A conformingly marked AMP mesh $\mathcal{T}$ and a marking set $\M\subseteq\mathcal{T}$}
\KwOut{the AMP marked mesh $\mathcal{T}^+=\texttt{RefineAMP}(\mathcal{T},\M)$ generated
from $\M$}
$\mathcal{T}^+\gets\BisectTets(\mathcal{T},\M)$\;

\ConformityBlock{
  $\mathcal S\gets\{K\in\mathcal{T}^+:K\text{ has a hanging vertex}\}$\;
  \While{$\mathcal S\ne\varnothing$}{
    $\mathcal{T}^+\gets\BisectTets(\mathcal{T}^+,\mathcal S)$\;
    $\mathcal S\gets\{K\in\mathcal{T}^+:K\text{ has a hanging vertex}\}$\;
  }
}
\Return $\mathcal{T}^+$\;
\end{algorithm}

Thus the input--output map of \Cref{alg:localrefine} is
\begin{equation*}
 \texttt{RefineAMP}(\mathcal{T},\M)
 :=\RefineToConformity\bigl(\BisectTets(\mathcal{T},\M)\bigr).
\end{equation*}
Despite its name, the \RefineToConformity{} module need not return a conforming mesh for
an arbitrary marked input: in three dimensions, a mesh without hanging
vertices may still be nonconforming.  However, \Cref{thm:AMPtermination}
shows that, along a refinement history issued from an initial AMP marked mesh,
the module terminates and returns a conformingly marked AMP mesh.

\begin{theorem}[AMP termination and conformity]\label{thm:AMPtermination}
Let $\mathcal{T}_0$ be an initial AMP marked mesh.  Every \texttt{\emph{RefineAMP}} call along the resulting
refinement history terminates and produces a conformingly marked AMP mesh.
Moreover, if
\[
 \mathcal{T}_{\ell+1}=\texttt{\emph{RefineAMP}}(\mathcal{T}_\ell,\M_\ell),
 \qquad \ell=0,1,\ldots,
\]
then every tetrahedron of $\mathcal{T}_\ell$ has generation at most $3\ell$.
\end{theorem}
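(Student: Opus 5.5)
This is the classical Arnold--Mukherjee--Pouly result. I would reprove it along their lines, organized around a \emph{generation-compatibility} invariant.

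\textbf{Step 1: invariants of a single bisection.} First, I check that the marks produced by (B1)--(B4) on a shared face are intrinsic to that face. Concretely, if two marked tetrahedra share a face $F$ with the same mark $\mu_F$, and both are bisected so that $F$ is cut, then the two induced cut faces and their marks agree. When $F$ is a refinement face of both tetrahedra, (B2) gives the same marks on both sides. When $\mu_F$ is the refinement edge on one side only, the other side must first bisect along a different edge of $F$, and then later along $\mu_F$ on the appropriate subface. Next, from the type-transition diagram I extract the key finite fact: within three consecutive bisections of a tetrahedron of type $P_u$, $A$ or $P_f$, every face mark $\mu_F$ of $K$ is itself bisected. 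I also need the companion fact for initial types $M,O$, which pass to $P_u$ after one step. Put differently, each marked edge of a face at generation $g$ is refined by generation $g+3$ among the descendants containing the corresponding subface. This is the standard AMP lemma. I would verify it by the finite case table of the four types (B1)--(B3), tracking the two nonrefinement faces.

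\textbf{Step 2: invariant for conformity.} I maintain, along the history, that $\mathcal{T}_\ell$ is conformingly marked and that every tetrahedron of $\mathcal{T}_\ell$ has generation at most $3\ell$. The inductive step is to show that inside one call, every tetrahedron bisected by \RefineToConformity{} has generation $<3(\ell+1)$. Take $K$ with a hanging vertex $z$. Then $z$ arose from bisecting a neighbour across a face $F$ of $K$, or across an edge of $K$. Using the intrinsic face marks, I argue that the edge containing $z$ is the marked edge of some face of $K$, or a descendant of one. By Step~1, $K$ then reaches $z$ after at most three more bisections. Since marked elements have generation $\le 3\ell$, and conformity refinement only bisects tetrahedra whose face marks were bisected by a neighbour of generation at most the bound, induction over batches gives the bound $3\ell+3$ for all created tetrahedra. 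This step requires comparing generations across faces. The mechanism is that a conformingly marked mesh satisfies: neighbours sharing a face $F$ split $F$ into the same subtriangulation up to the depth fixed by the marks. Hence the hanging vertex lies on the common marked-edge descendant.

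\textbf{Step 3: termination and conformity.} Since generations stay bounded by $3\ell+3$, each batch strictly increases the total number of bisections, and the refinement trees have bounded depth. The while loop therefore terminates. At termination there are no hanging vertices. Intrinsic face marking (Step~1) shows that two tetrahedra meeting on parts of faces induce the same refinement of the common face. With no hanging vertices, this forces face-to-face intersection, hence a conforming mesh with agreeing marks.

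\textbf{Main obstacle.} I expect Step~2 to be the hardest: proving that a hanging vertex on $K$ always lies on a mark-edge descendant, so that conformity bisections never exceed three generations beyond the marked ones. Hanging vertices can arrive across edges rather than faces, and the $P_f$ rule (B3) changes the new face mark. I would first try to establish the lemma for faces only, carrying the face's own refinement history as a 2D newest-vertex tree. I would then reduce edge-hanging vertices to face-hanging ones via the edge star.
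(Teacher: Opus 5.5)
The paper does not reprove this theorem; it cites Arnold--Mukherjee--Pouly. Their argument rests on the conformity of the uniform macro-refinements $\mathcal U_q$, recorded here as items (A1)--(A2) of \Cref{prop:AMPmacrostructure}. Your proposal has a genuine gap at Step~2, which is where the whole generation bound lives, and the mechanism you sketch there does not supply it.

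\begin{itemize}
\item Your Step~1 fact is too weak to carry Step~2. It is also immediate: by (B1) the marks of the two nonrefinement faces become the children's refinement edges, so all four face marks are bisected within two bisections. It says nothing about edges of $K$ that are not face marks, about deeper face-tree vertices, or about the generation of the tetrahedron that receives a hanging vertex.
\item Your intermediate claim, that a hanging vertex of $K$ lies on a face mark of $K$ or a descendant of one, is false for edge-star propagation. If a tetrahedron sharing only the edge $e$ with $K$ bisects $e$, then $\operatorname{mid}(e)$ is hanging for $K$, yet $e$ need not mark either face of $K$ containing it.
\item Most importantly, generation is not monotone along closure chains. A generation-$3q$ event bisecting $e$ may force the bisection of a generation-$(3q+2)$ neighbour whose refinement edge is also $e$. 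So a batch induction of the form ``only tetrahedra whose marks were split by a neighbour of admissible generation get bisected'' gives no reason why a tetrahedron of generation exactly $3(\ell+1)$ can never acquire a hanging vertex.
\item Your description of the one-sided face case is wrong. If $\refedge(K)\not\subset F$, bisecting $K$ does not cut $F$ at all; $F$ is inherited and its mark becomes the child's refinement edge. The first cut of $F$ from either side is therefore always along $\mu_F$, which is exactly why the face tree is intrinsic (\Cref{lem:facetree}).
\end{itemize}

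The missing idea is AMP's three-bisection structure. Three uniform bisections of an unflagged tetrahedron (of type $A$ or $P_u$, or initially $M$ or $O$) bisect all six edges, induce on each face the depth-two nodes of its intrinsic face tree, and return unflagged types $A$ or $P_u$. By induction every $\mathcal U_q$ is conformingly marked, and with this the bound is short:
\begin{itemize}
\item After batch zero all generations are at most $3\ell+1$.
\item While all generations are at most $3(\ell+1)$, every current vertex is a vertex of $\mathcal U_{\ell+1}$, since each vertex of a parent is a vertex of one of its children.
\item A tetrahedron of generation exactly $3(\ell+1)$ is an element of the conforming mesh $\mathcal U_{\ell+1}$, so it has no hanging vertex and is never selected.
\item Hence the bound persists batch by batch, and termination follows from finiteness of the depth-bounded forest, as in your Step~3.
\end{itemize}
Equivalently, via (A2) and \Cref{lem:redge}: the first-split vertex below a hanging vertex is the midpoint of an already bisected edge of $K$, which has macro-generation at most $\ell$. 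But every edge of a generation-$3(\ell+1)$ tetrahedron has macro-generation $\ell+1$. For Step~3 you should also note that faces created inside the call, including the exceptional (B3) mark for type $P_f$, receive the same mark from both children, so the face-tree argument covers them too.
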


\begin{proof}
This is the termination and generation theorem of
Arnold--Mukherjee--Pouly~\cite[Theorem~3.1]{AMP2000}.
\end{proof}

\subsection{The three-bisection macrostructure}

Let $\mathcal{U}_q$ be the mesh obtained by applying $\BisectTet$ uniformly $3q$
times to every initial tetrahedron.  We call $\mathcal{U}_q$ the $q$th
\emph{macro-mesh}.  The AMP three-bisection structure partitions the
geometric edges occurring in the uniform refinement tree into the sets
$\mathcal{E}(\mathcal{U}_q)$, $q\ge0$, see
\cite{AMP2000}.  We call $q$ the
\emph{macro-generation} of an edge $e\in\mathcal{E}(\mathcal{U}_q)$ and write
\[
 \mgen(e)=q
 \quad\Longleftrightarrow\quad
 e\in\mathcal{E}(\mathcal{U}_q).
\]
We call the passage from $\mathcal{U}_q$ to $\mathcal{U}_{q+1}$ one AMP
\emph{macrostep}.
For a tetrahedron $K$, set
\begin{equation*}
 \lvl(K):=\left\lfloor\frac{\gen(K)}3\right\rfloor.
\end{equation*}

\begin{proposition}[AMP macrostructure]\label{prop:AMPmacrostructure}
The uniform AMP refinement tree has the following properties.
\begin{enumerate}[label=(A\arabic*),leftmargin=2.8em]
\item \label{A1}
Every $\mathcal{U}_q$ is conformingly marked and unflagged.  For $q\ge1$, each
tetrahedron of $\mathcal{U}_q$ is of type $A$ or $P_u$.
\item \label{A2}
If $\gen(K)=3q+s$, $s\in\{0,1,2\}$, then the possible types of $K$ and
the macro-generations of its edges are
\[
\begin{array}{c|c|c}
s&\text{types of }K&\text{macro-generations of the edges of }K\\ \hline
0&A,\ P_u\text{; also }M,O\text{ if }q=0&q,\\
1&P_u,\ P_f&q\text{ or }q+1,\\
2&P_f,\ A&q\text{ or }q+1.
\end{array}
\]
\item \label{A3}
The descendants of the fixed finite initial mesh belong to finitely many
similarity classes and are uniformly shape regular.
\end{enumerate}
\end{proposition}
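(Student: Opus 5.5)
The plan is a direct finite analysis of three consecutive bisections, using the type transition diagram in \Cref{fig:AMPtypetransition} and the rules (B1)--(B4). I would argue by induction on $q$.

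\textbf{Step 1: transient types.} Starting from an unflagged initial mesh (types $P_u,A,M,O$), the diagram shows that the type sequences along three consecutive bisections are fixed:
\begin{itemize}[leftmargin=2.1em]
\item from $A$: $A\to P_u\to P_f\to A$;
\item from $P_u$: $P_u\to P_f\to A\to P_u$;
\item from $M$ or $O$: $M,O\to P_u\to P_f\to A$.
\end{itemize}
Since siblings always share their type, after three bisections every descendant has type $A$ or $P_u$. Since $P_f$ never appears at generations $\equiv 0 \pmod 3$, all flags are unset there. This yields the type columns of \ref{A2} and the type assertion of \ref{A1}.

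\textbf{Step 2: conforming marks on $\mathcal{U}_q$.} I would show by induction that the marks two tetrahedra of $\mathcal{U}_q$ induce on a shared face agree.
\begin{itemize}[leftmargin=2.1em]
\item \emph{Inherited and cut faces.} Marks come from (B1)--(B2), which depend only on the parent face mark and on the face itself, not on the tetrahedron. If the parent marks agree on a face, both sides bisect that face's marked edge. The resulting cut faces therefore coincide and receive the same marks.
\item \emph{New interior faces.} These lie inside one initial tetrahedron, so both sides are siblings with the same data, and agreement is immediate.
\end{itemize}
Uniform refinement means every tetrahedron is bisected, and each face is split along its marked edge by both neighbors. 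Hence the face triangulations match, which gives conformity.

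The delicate case is (B3) for $P_f$ parents, whose new face is marked by $[m,w]$ rather than by the edge opposite $m$. This face is shared only by the two siblings, so it causes no inconsistency.

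\textbf{Step 3: edge macro-generations and the table in \ref{A2}.} I would track which edges of $K$ at generation $3q+s$ already exist in $\mathcal{U}_q$ and which are new. The parts of the table follow as:
\begin{itemize}[leftmargin=2.1em]
\item \emph{$s=0$.} All edges of $K$ are edges of $\mathcal{U}_q$ by definition.
\item \emph{$s=1,2$.} Each edge is either an (unsplit) edge of $\mathcal{U}_q$ or a new edge created during the macrostep. I need that new edges are never bisected again before $\mathcal{U}_{q+1}$ and that they are edges of $\mathcal{U}_{q+1}$.
\end{itemize}
The main obstacle is this last point. It requires checking, for each of the types $A$ and $P_u$ (and the transient $M,O$), that the three refinement edges used in the macrostep are old edges or halves of old edges lying in $\mathcal{E}(\mathcal{U}_q)$-descendant position. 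Concretely, these are the three pairwise edges associated with the marked structure: the refinement edge first, then the face-marked edges. A new edge such as $[m,v_2]$ only becomes a refinement edge at the next macrostep.

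I would verify this by writing out the $P_u$ case, as in \Cref{tab:Pu-bisection}, and the $A$ case for three bisections symbolically. Rather than reproving the partition claim ourselves, I would cite \cite{AMP2000} for the fact that the edges partition into the sets $\mathcal{E}(\mathcal{U}_q)$.

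\textbf{Step 4: similarity classes and shape regularity (\ref{A3}).} After one macrostep, each tetrahedron of $\mathcal{U}_1$ is congruent to a scaled copy of one of finitely many configurations, determined by its parent's type and marks. For types $A$ and $P_u$, the three-step refinement is a fixed affine subdivision of the reference marked tetrahedron, which maps similarity classes to finitely many classes, as in the AMP/Kossaczk\'y argument. So finitely many classes arise per initial tetrahedron. The intermediate generations $s=1,2$ add only finitely many more classes. Shape regularity follows from finiteness. Here I would again rely on \cite[Theorem~4.1]{AMP2000}, verifying only that the reduction to that theorem is valid.
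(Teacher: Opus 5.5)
\textbf{Gap in Step 2.} Your Step 2 does not prove that $\mathcal{U}_{q+1}$ is conforming.

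Agreement of the rules (B1)--(B3) on the two sides of a shared face $F$ (equivalently, \Cref{lem:facetree}) only shows that both traces are subtrees of one common marked-face tree. Likewise, ``each face is split along its marked edge by both neighbors'' only shows that both subtrees contain the root split. Conformity needs the two subtrees to be \emph{equal} at generation $3q+3$.

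Nothing in your argument uses that a macrostep consists of exactly three bisections. The same reasoning would apply after a single uniform bisection, where the conclusion is false: if $F=K^+\cap K^-$ contains $\refedge(K^+)$ but not $\refedge(K^-)$, which is easily arranged by a global edge order, only $K^+$ splits $F$.

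Within a macrostep the timings genuinely differ. In the canonical $K_{P_u}$:
\begin{itemize}
\item the face $[0,1,2]$ is split at generations one and two;
\item the face $[0,2,3]$ is split at generations one and three;
\item the face $[0,1,3]$ is split at generations two and three.
\end{itemize}
So a neighbor could, a priori, reach a different depth on the shared face.

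The missing ingredient is a finite synchronization check. For each parent type $A$ and $P_u$ (and also $M$ and $O$ when $q=0$), three uniform bisections must refine every face of the parent to depth exactly two in its face tree (four trace triangles). In addition, every face created inside the parent during the macrostep must be refined to the same depth from both sides. This does hold, and it is where the number three enters. It is the content of the AMP results that the paper's proof cites for (A1)--(A2), and your proposal neither performs nor invokes it.

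\textbf{Interior faces.} Your treatment of interior faces has the same defect. After generations two and three, sub-faces of the first new face $[m,v_2,v_3]$ are shared by descendants of different children, and these need not split them at the same time. So ``siblings with the same data'' does not settle the question. The $P_f$ exception in (B3) is indeed harmless for marks, but marks were never the issue; depth is.

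\textbf{Smaller points.}
\begin{itemize}
\item In Step 3, the fact you need is that every refinement edge used at generations $3q$, $3q+1$, $3q+2$ is an edge of the macro-ancestor itself, not a half of one. This follows from your Step 1 together with \Cref{lem:firstmicrofacemarks}. You also need that all six edges of the macro-ancestor are bisected during the macrostep, so that the sets $\mathcal{E}(\mathcal{U}_q)$ are disjoint.
\item In Step 4, a fixed affine subdivision does not by itself keep the number of similarity classes finite under iteration. Finiteness comes from self-similarity in reference coordinates (the correspondence with Kossaczk\'y--Maubach bisection established by AMP). So the citation, not the heuristic, is doing the work.
\end{itemize}
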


\begin{proof}
Items~\ref{A1}--\ref{A2} are proved
in~\cite[Lemmas~3.2--3.3 and Proposition~3.4]{AMP2000}.  Item~\ref{A3} follows
from the finite-similarity-class theorem; see
\cite[Proposition~4.1 and Theorems~4.2 and~4.5]{AMP2000}.
\end{proof}

\section{Intrinsic causal structure}
\label{sec:amp-analysis}

The algorithm and macrostructure recalled in the preceding section are due to
Arnold--Mukherjee--Pouly.  We now derive the intrinsic refinement trees and
causal chains associated with one refinement call and prove their
macro-generation monotonicity, together with the macro-star counting
consequences used below.  The resulting horizontal-propagation bound is proved
in \Cref{sec:horizontal-propagation}; its consequences for geometric locality
and closure complexity are developed in \Cref{sec:amp-closure}.

\subsection{Geometry of AMP descendants}

By the shape-regularity statement in \Cref{prop:AMPmacrostructure}, the
descendants of the fixed finite initial mesh are uniformly shape regular, with
constants depending only on $\mathcal{T}_0$ and its initial markings.

\begin{lemma}[Scale bounds]\label{lem:scale}
There are constants $c_V,C_V,c_h,C_h>0$, depending only on the fixed initial
mesh $\mathcal{T}_0$, such that every AMP descendant $K$ satisfies
\begin{align}
 c_V2^{-3\lvl(K)}&\le |K|\le C_V2^{-3\lvl(K)},\label{eq:volscale}\\
 c_h2^{-\lvl(K)}&\le \diam(K)\le C_h2^{-\lvl(K)}.\label{eq:diamscale}
\end{align}
\end{lemma}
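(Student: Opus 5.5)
The volume bound follows directly from the fact that each bisection halves volume. The diameter bound needs more: I will combine that volume identity with the uniform shape regularity of \Cref{prop:AMPmacrostructure}~\ref{A3}.

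\emph{Volume bound.} Let $K$ be an AMP descendant with ancestor $K_0\in\mathcal T_0$. Each bisection halves volume, so
\[
 |K|=2^{-\gen(K)}|K_0|.
\]
Write $\gen(K)=3\lvl(K)+s$ with $s\in\{0,1,2\}$. Then $2^{-\gen(K)}$ lies between $\tfrac14\,2^{-3\lvl(K)}$ and $2^{-3\lvl(K)}$. Since $\mathcal T_0$ is finite, we may set
\[
 c_V:=\tfrac14\min_{K_0\in\mathcal T_0}|K_0|,\qquad C_V:=\max_{K_0\in\mathcal T_0}|K_0|,
\]
and \eqref{eq:volscale} follows.

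\emph{Diameter bound.} By \ref{A3}, all descendants fall into finitely many similarity classes. Hence the shape ratio $\rho(K):=\diam(K)^3/|K|$ takes only finitely many positive values. Let $\rho_{\min}$ and $\rho_{\max}$ denote the smallest and largest of these. Then
\[
 \rho_{\min}|K|\le \diam(K)^3\le \rho_{\max}|K|.
\]
Inserting \eqref{eq:volscale} and taking cube roots gives \eqref{eq:diamscale} with
\[
 c_h=(\rho_{\min}c_V)^{1/3},\qquad C_h=(\rho_{\max}C_V)^{1/3}.
\]
The exponent comes out right because $(2^{-3\lvl(K)})^{1/3}=2^{-\lvl(K)}$.

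If one prefers to avoid similarity classes, the same argument works with uniform shape regularity alone. In that form one only uses $\diam(K)^3\ge|K|$ up to a constant and $\diam(K)^3\le C|K|$ from the inradius bound.

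The only point needing care is the dependence of the constants. All constants depend only on $\mathcal T_0$ and its initial markings, since the similarity classes are determined by the initial marked mesh. There is no genuine obstacle here; the substantive input is the finite-similarity theorem of \cite{AMP2000}, which we take as given.
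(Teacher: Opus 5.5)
Your proof is correct and is the same as the paper's. In both, volume halving under each bisection gives \eqref{eq:volscale}, and the finite-similarity (shape-regularity) statement of \Cref{prop:AMPmacrostructure} turns $|K|\simeq\diam(K)^3$ into \eqref{eq:diamscale}. You only make explicit the constants $c_V,C_V,c_h,C_h$ that the paper leaves implicit.
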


\begin{proof}
Let $K_0\in\mathcal{T}_0$ be the ancestor of $K$ and write
$\gen(K)=3q+s$ with $s\in\{0,1,2\}$.  Since each bisection halves volume,
\[
 |K|=2^{-3q-s}|K_0|.
\]
As $\mathcal{T}_0$ is finite, this immediately gives \eqref{eq:volscale}.  Uniform shape
regularity implies $|K|\simeq\diam(K)^3$, with equivalence constants depending
only on the finite family of similarity classes and on $\mathcal{T}_0$.  Combining this
with \eqref{eq:volscale} gives \eqref{eq:diamscale}.
\end{proof}

For a vertex $z\in\mathcal{V}(\mathcal{U}_q)$ and an edge
$e\in\mathcal{E}(\mathcal{U}_q)$, set
\[
 \omega_q(z):=\{K\in\mathcal{U}_q:z\in\mathcal{V}(K)\},
 \qquad
 \omega_q(e):=\{K\in\mathcal{U}_q:e\in\mathcal{E}(K)\}.
\]

\begin{lemma}[Uniform vertex and edge valence]\label{lem:valence}
The quantity
\[
 \nu=\nu(\mathcal{T}_0)
 :=\sup_{q\ge0}\sup_{z\in\mathcal{V}(\mathcal{U}_q)}\card\omega_q(z)<\infty.
\]
Moreover, it holds that
\[
 \card\omega_q(e)\le\nu
 \qquad\text{for every }q\ge0,\ e\in\mathcal{E}(\mathcal{U}_q).
\]
\end{lemma}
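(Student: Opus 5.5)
The plan is to bound $\card\omega_q(z)$ by a volume-packing argument that uses only \Cref{lem:scale}, \Cref{prop:AMPmacrostructure}\ref{A1}, and the conformity of $\mathcal{U}_q$. The edge bound will then follow from the vertex bound.

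Fix $q\ge0$ and $z\in\mathcal{V}(\mathcal{U}_q)$. Every $K\in\mathcal{U}_q$ has generation $3q$, hence $\lvl(K)=q$. By \eqref{eq:volscale} and \eqref{eq:diamscale},
\[
 |K|\ge c_V2^{-3q}
 \quad\text{and}\quad
 \diam(K)\le C_h2^{-q}.
\]
If $K\in\omega_q(z)$, then $z\in K$, so $K\subseteq \overline{B}(z,C_h2^{-q})$. The tetrahedra of $\mathcal{U}_q$ have pairwise disjoint interiors, because $\mathcal{U}_q$ is a mesh obtained by bisection from $\mathcal{T}_0$. Summing volumes over $\omega_q(z)$ therefore gives
\[
 \card\omega_q(z)\,c_V2^{-3q}\le \tfrac{4\pi}{3}C_h^3 2^{-3q},
 \qquad\text{so}\qquad
 \card\omega_q(z)\le \frac{4\pi C_h^3}{3c_V}.
\]
This bound is independent of $q$ and $z$. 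Hence $\nu<\infty$, and moreover $\nu\le 4\pi C_h^3/(3c_V)$.

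For the edge statement, let $e\in\mathcal{E}(\mathcal{U}_q)$ and choose an endpoint $z$ of $e$. Then $z\in\mathcal{V}(\mathcal{U}_q)$. Every $K$ with $e\in\mathcal{E}(K)$ has $z$ as a vertex, so $\omega_q(e)\subseteq\omega_q(z)$. It follows that $\card\omega_q(e)\le\card\omega_q(z)\le\nu$.

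There is no real obstacle in this argument. The only points needing care are the following.
\begin{itemize}
\item The constants in \Cref{lem:scale} must be uniform over all descendants. This is exactly what \Cref{prop:AMPmacrostructure}\ref{A3} supplies.
\item Membership $e\in\mathcal{E}(K)$ should be read as an edge of $K$ in the sense of the paper's definitions. Conformity of $\mathcal{U}_q$ from \ref{A1} ensures that vertices and edges of $\mathcal{U}_q$ are genuine vertices and edges of the incident tetrahedra, so the stars are well defined.
\end{itemize}

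An alternative route is to use the finiteness of similarity classes, which bounds solid angles at $z$ from below. It would give a comparable bound, but the volume argument above is simpler.
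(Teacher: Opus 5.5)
Your proof is correct, but it follows a different route from the paper's. The paper uses the alternative you mention at the end. The finite-similarity-class theorem gives a uniform lower bound $\theta_{\min}>0$ on all solid angles of descendants. The tetrahedra of a vertex star have interior-disjoint solid-angle sectors at $z$, so $\card\omega_q(z)\le 4\pi/\theta_{\min}$. The edge bound then follows exactly as in your argument, via $\omega_q(e)\subset\omega_q(z)$.

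Your volume-packing argument is essentially \Cref{lem:packing} specialized to a vertex star of the uniform mesh $\mathcal{U}_q$. It is not circular, since \Cref{lem:scale} is derived from item~\ref{A3} alone. It has two advantages. It gives the explicit bound $\nu\le 4\pi C_h^3/(3c_V)$ in the constants that already appear in $C_{\mathrm{clos}}(\mathcal{T}_0)$. It also uses a single packing tool for both the valence and the closure estimates.

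It has two costs. First, it depends on every element of the star having the same level, that is, on the quasi-uniformity of $\mathcal{U}_q$. Second, the ratio $C_h^3/c_V$ pairs the worst diameter with the worst volume over all of $\mathcal{T}_0$, so your bound deteriorates with the size spread of the initial mesh.

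The solid-angle bound depends only on the finitely many shapes. It is purely local and holds unchanged for vertex stars in graded, nonuniform descendant meshes. It is therefore both sharper and more robust, although less explicit in terms of the constants used later.
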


\begin{proof}
The AMP finite-similarity-class theorem gives a positive lower bound, depending
only on $\mathcal{T}_0$, for every solid angle of every descendant tetrahedron.  The
solid-angle sectors of the tetrahedra in a vertex star have disjoint interiors
on the unit sphere and their total area is at most $4\pi$.  This proves the
uniform vertex bound.  For either endpoint $z$ of an edge $e$,
$\omega_q(e)\subset\omega_q(z)$, which proves the edge bound.
\end{proof}

We first isolate a direct consequence of the face-marking rules.  Besides
shortening the proof of the old-edge property, it will later exclude the second
microgeneration from a different-edge horizontal transition.

\begin{lemma}[Old face marks in the first microgeneration]
\label{lem:firstmicrofacemarks}
Let $K$ be an unflagged marked tetrahedron and let $T$ be a child of $K$.
Every face mark of $T$ belongs to $\mathcal{E}(K)$.  Moreover, if $Q$ is a child of
$T$, then
\begin{equation}\label{eq:depthtwointersection}
 \mathcal{E}(K)\cap\mathcal{E}(Q)=\{\refedge(Q)\}.
\end{equation}
\end{lemma}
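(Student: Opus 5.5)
The plan is a direct verification from the rules (B1)--(B4), with no earlier results beyond the definitions. Write $K=[v_0,v_1,v_2,v_3]$ with $\refedge(K)=[v_0,v_1]$ and $m=\operatorname{mid}([v_0,v_1])$. By the symmetry $v_0\leftrightarrow v_1$ it suffices to treat $T=K_0=[v_0,m,v_2,v_3]$.

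\textbf{Face marks of $T$.} I would list the four faces of $T$ and their marks, as in \Cref{tab:Pu-bisection}.
\begin{itemize}
\item The inherited face $[v_0,v_2,v_3]$ keeps its mark from $K$ by (B1), and that mark lies in $\mathcal{E}(K)$.
\item The cut faces $[v_0,m,v_2]$ and $[v_0,m,v_3]$ are marked by (B2) with the edges opposite $m$, namely $[v_0,v_2]$ and $[v_0,v_3]$.
\item The new face $[m,v_2,v_3]$ is governed by (B3). Because $K$ is unflagged, it is not of type $P_f$, so the exceptional clause does not apply and the mark is the edge opposite $m$, namely $[v_2,v_3]$.
\end{itemize}
All four marks therefore lie in $\mathcal{E}(K)$, which proves the first claim. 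Two features of this list are used below: no face mark of $T$ has $m$ as an endpoint, and every mark is an edge of $K$.

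\textbf{Edges of a grandchild $Q$.} By (B1), $\refedge(T)$ is the mark of the inherited face $[v_0,v_2,v_3]$. Hence $\refedge(T)=[a,b]$ with $a,b\in\{v_0,v_2,v_3\}$, and $m\notin\refedge(T)$. Let $c$ be the remaining vertex of $\{v_0,v_2,v_3\}$ and $m'=\operatorname{mid}([a,b])$. By symmetry, take $Q=[a,m',m,c]$.

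Every edge of $Q$ with endpoint $m$ or $m'$ is not in $\mathcal{E}(K)$, since neither point is a vertex of $K$. The only edge of $Q$ whose endpoints are both vertices of $K$ is $[a,c]$, and it is an edge of $K$. Hence $\mathcal{E}(K)\cap\mathcal{E}(Q)=\{[a,c]\}$.

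It remains to identify $\refedge(Q)$. By (B1) applied to the bisection of $T$, $\refedge(Q)$ is the mark in $T$ of the face of $T$ that $Q$ inherits, which is $[a,m,c]$. That face is one of the faces listed above: a cut face if $a$ or $c$ equals $v_0$, or the new face if $\{a,c\}=\{v_2,v_3\}$. Its mark is the edge opposite $m$, namely $[a,c]$. Therefore $\refedge(Q)=[a,c]$, which gives \eqref{eq:depthtwointersection}.

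\textbf{Main obstacle.} The only delicate point is the bookkeeping behind $\refedge(Q)=[a,c]$: the face of $T$ inherited by $Q$ always contains $m$, and one must confirm that its mark in $T$ avoids $m$. This is exactly where the unflagged hypothesis enters. If $K$ were of type $P_f$, (B3) would mark the new face by an edge $[m,w]$, and the second claim could fail.
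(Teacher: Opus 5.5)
Your proposal is correct and follows essentially the same route as the paper: you check from (B1)--(B3) that every face mark of $T$ is an edge of $K$, using unflaggedness only for the new face, and then observe that $Q$ has just two vertices of $K$ while $\refedge(Q)$, being the mark of its inherited face, is an edge of $K$. Where the paper concludes with the counting observation ``at most one edge of $K$'' together with the first part, you identify $\refedge(Q)=[a,c]$ explicitly, which makes the same step slightly more concrete.
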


\begin{proof}
An inherited face of $T$ keeps a marked edge of $K$.  A cut face is marked by
the edge opposite the new midpoint, which is an uncut edge of $K$.  Finally,
because $K$ is unflagged and hence is not of type $P_f$, the new face is marked
by the edge opposite the new midpoint, again an edge of $K$.  Thus every face
mark of $T$ is an edge of $K$.

The refinement edge of $T$ is the mark of its inherited face and is therefore
an edge of $K$.  The tetrahedron $T$ has three vertices of $K$ and the midpoint
created in the bisection of $K$.  Bisecting $T$ along its refinement edge leaves
each child $Q$ with exactly two vertices of $K$, and hence with at most one edge
of $K$.  On the other hand, $\refedge(Q)$ is the mark of its inherited face;
the first part shows that this is an edge of $K$.  This proves
\eqref{eq:depthtwointersection}.
\end{proof}

The following observation is the basic combinatorial fact behind the causal
reduction.

\begin{lemma}[Macro-generation of the refinement edge]\label{lem:redge}
Let $K$ be an AMP descendant with
\[
  \gen(K)=3q+s,\qquad s\in\{0,1,2\},
\]
and let $Q\in\mathcal{U}_q$ be its generation-$3q$ macro-ancestor.  Then
\begin{equation}\label{eq:redgeancestor}
  \refedge(K)\in\mathcal{E}(Q),
  \qquad
  \mgen(\refedge(K))=q=\lvl(K).
\end{equation}
\end{lemma}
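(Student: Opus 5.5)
We split according to $s\in\{0,1,2\}$ and show directly that $\refedge(K)$ is an edge of the macro-ancestor $Q$. The macro-generation claim then follows at once. Indeed, $Q\in\mathcal{U}_q$ gives $\mathcal{E}(Q)\subseteq\mathcal{E}(\mathcal{U}_q)$, hence $\mgen(\refedge(K))=q$, and $\lvl(K)=\lfloor(3q+s)/3\rfloor=q$ by definition. Here we use that the sets $\mathcal{E}(\mathcal{U}_q)$ partition the edges of the uniform tree, as recalled before \Cref{prop:AMPmacrostructure}, so that $\mgen$ is well defined.

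\emph{Case $s=0$.} Here $K=Q$, and $\refedge(K)\in\mathcal{E}(K)$ by \Cref{def:markedtetrahedron}.

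\emph{Case $s=1$.} Here $K$ is a child of $Q$. By \ref{A1}, $Q$ is unflagged (of type $A$ or $P_u$ for $q\ge1$, and unflagged initially for $q=0$). The refinement edge of $K$ is the mark of its inherited face. By (B1) this is a face mark of $Q$, hence an edge of $Q$. This is also the first part of \Cref{lem:firstmicrofacemarks} applied with $Q$ in the role of $K$ there.

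\emph{Case $s=2$.} Here $K$ is a grandchild of $Q$ through an intermediate child $T$. Since $Q$ is unflagged, \Cref{lem:firstmicrofacemarks} applies to the chain $Q\to T\to K$, and \eqref{eq:depthtwointersection} gives
\[
 \mathcal{E}(Q)\cap\mathcal{E}(K)=\{\refedge(K)\}.
\]
In particular $\refedge(K)\in\mathcal{E}(Q)$. The only point needing care is that the hypothesis ``unflagged'' of \Cref{lem:firstmicrofacemarks} is imposed on the grandparent $Q$, not on $T$. This matters because $T$ may be of type $P_f$ (by \ref{A2} at $s=1$), so the exceptional rule (B3) can act when $T$ is bisected. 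The lemma's proof handles this: $\refedge(K)$ is the mark of an inherited face of $K$, that is, a face mark of $T$, and all face marks of $T$ lie in $\mathcal{E}(Q)$.

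The main obstacle, such as it is, is checking that every macro-ancestor is unflagged, including at $q=0$. For $q=0$ this holds because initial AMP marked meshes have all flags unset. For $q\ge1$ it follows from \ref{A1}. With this verified, the three cases are immediate.
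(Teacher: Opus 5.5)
Your proof is correct and follows the paper's argument essentially verbatim. It uses the same three-case split: \textnormal{(B1)} for $s=1$, and \Cref{lem:firstmicrofacemarks} applied to the macro-ancestor $Q$, unflagged by \ref{A1}, for $s=2$, followed by $\mathcal{E}(Q)\subseteq\mathcal{E}(\mathcal{U}_q)$. Your appeal to unflaggedness in the case $s=1$ is unnecessary, since \textnormal{(B1)} keeps inherited face marks regardless of the flag, but it does no harm.
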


\begin{proof}
For $s=0$, one has $K=Q$, so every edge of $K$ is an edge of $Q$ and has
macro-generation $q$.

Let now $s=1$.  Then $K$ is a child of $Q$.  By the AMP bisection
rule, the refinement edge of a child is the marked edge of its inherited face.
That edge is an edge of $Q$, hence has macro-generation $q$.

For $s=2$, let $T$ be the generation-$3q+1$ parent of $K$.  By item~\ref{A1},
$Q$ is unflagged, and \Cref{lem:firstmicrofacemarks} therefore shows that the
refinement edge of $K$ is an edge of $Q$.

This proves the first assertion in \eqref{eq:redgeancestor}.  Since $Q$
belongs to $\mathcal{U}_q$, its edges have macro-generation $q$, which proves the
second assertion.
\end{proof}

\begin{remark}
The proof uses exactly the feature that AMP organizes refinement into cycles of
three bisections.  New macro-edges of generation $q+1$ may already occur in
generation $3q+1$ or $3q+2$ tetrahedra, but these new edges are not used as the
refinement edge until the next macrostep.
\end{remark}

\subsection{Causal refinement chains and monotonicity}

\subsubsection{The intrinsic refinement tree of a marked face}

For a marked triangular face $F$, denote its marked edge by $\mu_F$.  Every
triangular face $G\subseteq F$ produced by bisections of tetrahedra incident
to $F$ is called a \emph{trace triangle}.  At any stage, the current trace
triangles form the triangulation induced on $F$ from that side.  A
hanging vertex in the relative interior of $F$ need not itself be the midpoint
of an edge of the event tetrahedron.  We therefore trace the induced face
refinement back to its first split.

\begin{lemma}[Intrinsic face-refinement tree]\label{lem:facetree}
Let $F$ be a face in a conformingly marked AMP mesh.  The trace triangles
induced on $F$ from either side form the same intrinsic binary marked-face
refinement tree $\mathbb F(F)$ rooted at $F$.  An ambient tetrahedral
bisection either leaves a current trace triangle $G$ unchanged with the same
mark, or splits $G$ at the midpoint of its marked edge.  Consequently, every
nontrivial trace refinement of $F$ contains
\[
  b_F:=\operatorname{mid}(\mu_F)
\]
and $b_F$ is created before every vertex introduced at a proper descendant of
the root.
\end{lemma}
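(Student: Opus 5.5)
The plan is to prove one local claim about a single bisection, acting on a single trace triangle, and then obtain every assertion of \Cref{lem:facetree} by induction on the number of ambient bisections.

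\emph{Local claim.} Let $G$ be a current trace triangle carrying a mark $\mu_G$, and let $K$ be an active tetrahedron having $G$ as a face, or a face containing $G$. When $K$ is bisected along $\refedge(K)$, the midpoint $m$ of $\refedge(K)$ is added. Either $m\notin G$, so $G$ remains a face of a child with its mark kept, or $G$ is a face of $K$ and $\refedge(K)$ is an edge of $G$.

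In the second case we must show $\refedge(K)=\mu_G$. The mark agreement across $G$ maintained along the history gives $\mu_G=\mu_{G}^{K}$, the mark $K$ assigns to $G$. Since $m$ lies on an edge of $G$, the face $G$ contains $\refedge(K)$ and is therefore a refinement face of $K$. By \Cref{def:markedtetrahedron}, $\mu^K_G=\refedge(K)$. So $G$ splits at $\operatorname{mid}(\mu_G)$ into two cut faces.

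The marks of the two halves are then fixed by rule (B2): each is the edge opposite $m$, namely the old edges of $G$ other than $\mu_G$. These data depend only on $(G,\mu_G)$. Hence the children with their marks are determined intrinsically by the marked triangle. This defines a binary tree $\mathbb F(F)$ by the newest-vertex rule on marked triangles, where each child is marked by its edge opposite the new vertex.

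\emph{Same tree from both sides.} Let $K^\pm$ be the two tetrahedra incident to $F$. Both start with the common mark $\mu_F$, by conforming marking. Inductively, each side's current trace triangulation is a subtree cut of $\mathbb F(F)$, because each split applies the same intrinsic rule. Note that the two sides may be at different depths at intermediate stages. This is exactly the hanging-vertex situation, and it is harmless because only the tree, not the cut, is claimed to coincide.

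\emph{Consequence.} Every nontrivial trace refinement has first split the root $F$, and that split is at $b_F=\operatorname{mid}(\mu_F)$. Every vertex belonging to a proper descendant of the root lies in a subtree below this split, so it is created later than $b_F$.

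\emph{Main obstacle.} The subtle step is the second case of the local claim when $G$ is a proper sub-triangle of an original face of $K$. Then the mark $\mu_G$ was produced inside a descendant tetrahedron, not stored as a face of the old $K$. I would handle this by strengthening the induction hypothesis: every current trace triangle $G$ is an actual face of each active tetrahedron on its side that contains it, and it carries the mark that tetrahedron assigns to it.

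This hypothesis is preserved by bisection. If $G$ contains none of $\refedge(K)$, it is an inherited face or lies in one, and its mark is kept by (B1). If $G$ is split, its halves become cut faces, marked by (B2).

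Under this hypothesis, a tetrahedron bisecting across $G$ must contain $G$ as a face with $\refedge(K)\subset G$. The argument above then applies, and no ambiguity arises from the $P_f$ rule (B3), since (B3) concerns only the interior new face $[m,v_2,v_3]$, never a trace on $F$.
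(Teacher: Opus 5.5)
Your proposal is correct and follows essentially the same route as the paper's proof: the same two-case analysis (a refinement face split at $\operatorname{mid}(\mu_G)$ with (B2) marks, or an inherited face keeping its mark by (B1)), the same recursively defined intrinsic tree, and the same induction from the common root mark $\mu_F$. Your explicit invariant, that each trace triangle is an actual face of the active tetrahedron on its side and carries the tree mark, only spells out what the paper assumes when it begins with ``a marked tetrahedron having a current trace triangle $G$ as a face.''
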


\begin{proof}
Let $K$ be a marked tetrahedron having a current trace triangle $G$ as a face.
There are two cases.
If $\refedge(K)\subset G$, then $G$ is a refinement face and, by the
definition of a marked tetrahedron, its marked edge is
$\mu_G=\refedge(K)$.  The bisection of $K$ therefore introduces
$\operatorname{mid}(\mu_G)$ and cuts $G$ into two triangles.  AMP's rule for cut
faces marks each of these triangles by its edge opposite the new midpoint;
the result depends only on the marked triangle $(G,\mu_G)$.

If $\refedge(K)\not\subset G$, then exactly one child of $K$ inherits $G$.
No new vertex is introduced on $G$, and AMP's inherited-face rule preserves
$\mu_G$; moreover that inherited mark is the refinement edge of the child.
Thus, if the branch containing $G$ is bisected again, its first bisection that
changes the trace on $G$ necessarily bisects $\mu_G$.

The two alternatives recursively define a binary marked-face tree depending
only on the marked triangle $(G,\mu_G)$.  The two sides start from the same
marked root $F$; induction over successive splits therefore gives the same
descendant triangles and marks from both sides.  A nontrivial finite subtree
splits its root before a proper descendant can occur, which proves the final
assertion.
\end{proof}

\subsubsection{A causal forest for one call of \texttt{\emph{RefineAMP}}}

We now organize the bisections generated by one call of $\texttt{RefineAMP}$ into a
causal forest.  Fix a conformingly marked AMP mesh $\mathcal{T}$, a marking set $\M\subseteq\mathcal{T}$,
and one call
\[
 \mathcal{T}^+=\texttt{RefineAMP}(\mathcal{T},\M)
\]
and write the calculation as successive batches.  Batch zero bisects the marked
elements $\M$.  Each later batch bisects all active tetrahedra having a hanging
vertex in the intermediate mesh produced by the preceding batches.  Regard
every actual bisection as an \emph{event}; the batch-zero events will be the
roots.

\begin{lemma}[Current-edge causal predecessor]\label{lem:causaledge}
Let $K$ be the tetrahedron bisected by a non-root event.  Immediately before
that event there exist an edge $e\in\mathcal{E}(K)$ and a hanging vertex
$b=\operatorname{mid}(e)$ of $K$ such that an earlier event in the same call
bisected the geometric edge $e$ and created $b$.
\end{lemma}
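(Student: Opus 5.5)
The plan is to start from the selection rule of \RefineToConformity{}. A non-root event bisects a tetrahedron $K$ because, at the start of its batch, $K$ has a hanging vertex $z\in\partial K\setminus\mathcal{V}(K)$. So $z$ lies in the relative interior of either an edge $e'$ or a face $G$ of $K$, and I treat these two cases separately. In each case I want to replace $z$ by a canonical hanging vertex that is the midpoint of an edge of $K$, and then show that this midpoint was created inside the current call.

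\emph{Face case.} Suppose $z$ lies in the relative interior of a face $G$ of $K$. Then $G$ is a current trace triangle from the side of $K$, and the triangulation induced on $G$ from the opposite side has been refined nontrivially, since it contains $z$. By \Cref{lem:facetree}, applied to the node $G$ of the intrinsic tree $\mathbb F(F)$ (where $F$ is the face of $\mathcal{T}$, or the new interior face, containing $G$), the following hold:
\begin{itemize}[leftmargin=2em]
\item any nontrivial refinement of the marked triangle $(G,\mu_G)$ first splits $G$ at $b_G=\operatorname{mid}(\mu_G)$;
\item $b_G$ is therefore created no later than $z$.
\end{itemize}
Since $\mu_G\in\mathcal{E}(G)\subseteq\mathcal{E}(K)$, I take $e=\mu_G$ and $b=b_G$. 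This $b$ is a current vertex in the relative interior of $e$, so it is hanging for $K$, and it was created by an event that bisected the geometric edge $e$.

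\emph{Edge case.} Suppose $z$ lies in the relative interior of an edge $e'$ of $K$. I will use the one-dimensional analogue of \Cref{lem:facetree}: along a geometric edge, bisections only ever insert midpoints of current collinear subsegments. So the first event that puts a vertex in the relative interior of $e'$ must bisect a segment containing $e'$. Because the endpoints of $e'$ are already vertices and there is no earlier interior vertex, that segment must be $e'$ itself. Hence $\operatorname{mid}(e')$ exists and is hanging for $K$, and I take $e=e'$.

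\emph{Same call.} It remains to show that the event creating $b$ belongs to the current call. The input $\mathcal{T}$ is conforming, and edges created by bisection never contain old vertices in their interiors. Hence a vertex of $\mathcal{T}$ cannot lie in the relative interior of an edge of a descendant of some $K_{\mathcal{T}}\in\mathcal{T}$. So $b$ was created by an event of this call, and that event occurred in an earlier batch than the bisection of $K$.

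The main obstacle is the edge case: excluding an edge that crosses $e'$ transversally, or one that contains $e'$ without being nested in it dyadically. I would handle this by noting that every geometric edge in the history lies in the nested dyadic hierarchy of edges of the uniform refinement tree. Two such edges are therefore either nested collinearly or meet at most at vertices, which reduces the first-split argument to the collinear situation.
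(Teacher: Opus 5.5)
Your proposal is correct and follows the paper's proof essentially step for step. It uses the same edge/face case split on the hanging vertex $z$, the intrinsic face tree (\Cref{lem:facetree}) to replace a face-interior $z$ by $\operatorname{mid}(\mu_G)$, the first-split argument on the dyadic tree of the edge in the edge case, and conformity of the input mesh $\mathcal{T}$ to place the creating event in the current call. The dyadic-nesting fact you single out as the main obstacle is exactly what the paper asserts in one line (``the subdivision induced on $e_0$ \ldots\ belongs to the dyadic binary tree rooted at $e_0$''), so your treatment is, if anything, slightly more explicit there.
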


\begin{proof}
Let $\widetilde{\mathcal T}$ be the intermediate mesh immediately before the batch
containing the event $K$.  Since $K$ is a non-root event tetrahedron, it has a
hanging vertex
$z\in\mathcal{V}(\widetilde{\mathcal T})\cap(\partial K\setminus\mathcal{V}(K))$.
By definition, $z$ lies in the relative interior of an edge or a face of $K$.

If $z$ lies in the relative interior of an edge $e_0\in\mathcal{E}(K)$, the
subdivision induced on $e_0$ is obtained recursively by midpoint bisection and
hence belongs to the dyadic binary tree rooted at $e_0$.  Since $z$ is one of
its interior vertices, this subdivision is proper, and its first split
introduces
\[
  b:=\operatorname{mid}(e_0).
\]
Consequently $b$ is a current mesh vertex but is not a vertex of $K$, and we
take $e=e_0$.

Otherwise, $z$ lies in the relative interior of a face $F$ of $K$.  This cannot
be an exposed boundary face, so the trace from the other side is a proper
refinement of $F$.  By the face rules \textnormal{(B1)}--\textnormal{(B3)},
tracing $F$ backward through its face ancestry reaches either a shared face
of the input mesh $\mathcal{T}$ or a common new face created by an earlier
bisection.  In the former case the two incident input tetrahedra induce the
same mark, while in the latter case \textnormal{(B3)} assigns the same mark
from both children.  Hence the argument of \Cref{lem:facetree} applies to the
corresponding intrinsic face tree and, in particular, to its subtree rooted
at $F$.  Its first split therefore introduces
\[
  b:=\operatorname{mid}(\mu_F)
\]
and bisects $\mu_F$.  Thus $b$ is a current mesh vertex but not a vertex of
$K$, and in this case we take $e=\mu_F\in\mathcal{E}(K)$.

In both cases $b$ is the first-split vertex below the trace-tree node $e$.
By the construction of the edge and face trees, that split is realized only
when an incident tetrahedron is bisected along exactly $e$.  Hence some event
that created $b$ bisected $e$.

The creator belongs to the present call.  Indeed, let $K^0\in\mathcal{T}$ be the input
ancestor of $K$.  If $b$ were already a vertex of $\mathcal{T}$, then
$b\in K\subseteq K^0$.  Choose $T\in\mathcal{T}$ having $b$ as a vertex.  If
$T=K^0$, there is nothing to prove; otherwise conformity implies that
$T\cap K^0$ is a common vertex, edge, or face containing $b$, and hence
$b\in\mathcal{V}(K^0)$.  Every child that contains a vertex of its parent retains
that vertex.  Induction from $K^0$ to $K$ would therefore give
$b\in\mathcal{V}(K)$, a contradiction.  Since $b$ is present before the current
batch, it was created in batch zero or in an earlier conformity batch.
\end{proof}

\begin{definition}[Causal forest and causal chains]\label{def:causalforest}
For every non-root event choose one pair $(e,b)$ and one earlier creator event
provided by \Cref{lem:causaledge}, and declare that creator to be its parent.
The resulting directed graph is the \emph{causal forest} of the call.  Its roots
are the batch-zero events, and a \emph{causal chain} is a directed path in this
forest.  The forest depends on the choices of creator events and need not be
canonical.
\end{definition}

Each parent precedes its child, so the directed graph has no cycle; every
non-root event has exactly one chosen parent.  Let $K$ and $T$ be the
tetrahedra bisected by a parent event and its causal child, respectively, and
set
\[
 e_1:=\refedge(K),
 \qquad
 e_2:=\refedge(T).
\]
Then $e_1\in\mathcal{E}(T)$.

\begin{lemma}[One-step macro-generation monotonicity]\label{lem:monotone}
Let a parent event and its causal child bisect $K$ and $T$, respectively, and
set $e_1:=\refedge(K)$ and $e_2:=\refedge(T)$.  If $\mgen(e_1)=q$, then
\begin{equation*}
  \mgen(e_2)\in\{q,q-1\}.
\end{equation*}
In particular, along every root-to-node causal chain the macro-generations of the
successive bisection edges are nonincreasing and can decrease by at most one at
a time.
\end{lemma}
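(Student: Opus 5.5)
The plan is to reduce the statement to two facts already available: \Cref{lem:redge}, which identifies the macro-generation of a refinement edge with the level of its tetrahedron, and the edge table in item~\ref{A2} of \Cref{prop:AMPmacrostructure}, which says which macro-generations the edges of a tetrahedron can have relative to its level. The bridge between the parent $K$ and the child $T$ is the edge $e_1$, which the discussion after \Cref{def:causalforest} shows to be an edge of both. So the edge $e_1$ pins the level of $T$ to within one of the level of $K$.

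\textbf{Step 1: identify the common edge.} By \Cref{def:causalforest}, the parent event is the creator chosen via \Cref{lem:causaledge} for a pair $(e,b)$ with $e\in\mathcal{E}(T)$ and $b=\operatorname{mid}(e)$. That lemma says the creator event bisected the geometric edge $e$. An event bisecting $K$ bisects exactly $\refedge(K)$, so $e=\refedge(K)=e_1$, and hence $e_1\in\mathcal{E}(T)$. This is the observation recorded just before the statement; I would restate it so that the argument is self-contained.

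\textbf{Step 2: bound the level of $T$.} Both $K$ and $T$ are nodes of the uniform AMP refinement tree, since every actual bisection follows the same rule \BisectTet. Their edges are therefore edges occurring in that tree, and $\mgen$ is well defined on them because the sets $\mathcal{E}(\mathcal{U}_q)$ partition these geometric edges. Write $\gen(T)=3p+s$ with $s\in\{0,1,2\}$, so that $p=\lvl(T)$. Item~\ref{A2} then gives $\mgen(e')\in\{p,p+1\}$ for every $e'\in\mathcal{E}(T)$, and in fact $\mgen(e')=p$ when $s=0$. Applying this to $e'=e_1$, whose macro-generation is $q$ by hypothesis, yields $q\in\{p,p+1\}$, that is, $p\in\{q,q-1\}$.

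\textbf{Step 3: pass to the refinement edge of $T$.} \Cref{lem:redge} applied to $T$ gives $\mgen(e_2)=\lvl(T)=p\in\{q,q-1\}$, which is the claim. For the chain statement, I would apply this one-step bound to each consecutive parent--child pair along a root-to-node path and induct on the length of the path. This shows the successive bisection edges have nonincreasing macro-generations, with each step dropping by at most one.

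The only delicate point is Step~1 together with the well-definedness of $\mgen$ on geometric edges. One must make sure the creator event bisected $e$ itself, rather than some other edge that merely produced the vertex $b$. This is exactly what \Cref{lem:causaledge} guarantees through the edge and face trace trees, so I expect no real obstacle beyond citing it carefully. The rest is a direct reading of the table in item~\ref{A2}.
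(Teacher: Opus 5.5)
Your proposal is correct and follows essentially the same route as the paper. The paper also writes $\gen(T)=3j+s$, uses $e_1\in\mathcal{E}(T)$ together with the edge-generation table in item~\ref{A2} to get $q\in\{j,j+1\}$, and applies Lemma~\ref{lem:redge} to obtain $\mgen(e_2)=j$; your Step~1, which derives $e_1\in\mathcal{E}(T)$ from Lemma~\ref{lem:causaledge}, only spells out the observation the paper records just before the lemma.
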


\begin{proof}
Write $\gen(T)=3j+s$, $s\in\{0,1,2\}$.  By
\Cref{lem:redge},
\[
 \mgen(e_2)=j.
\]
By $e_1\in\mathcal{E}(T)$, $e_1$ is an edge of $T$.  If
$s=0$, all edges of $T$ have macro-generation $j$, so $q=j$.  If $s=1$ or
$s=2$, AMP's edge-generation table says that every edge of $T$ has
macro-generation $j$ or $j+1$.  Hence $q\in\{j,j+1\}$ and therefore
$j\in\{q,q-1\}$.
\end{proof}

Let $K_0,\ldots,K_J$ be the event tetrahedra along a causal chain and set
\[
  q_j:=\mgen(\refedge(K_j)),\qquad j=0,\ldots,J.
\]
A \emph{horizontal block} is a maximal consecutive subsequence
$K_r,\ldots,K_s$ for which
\[
  q_r=q_{r+1}=\cdots=q_s.
\]
By \Cref{lem:monotone}, all occurrences of any fixed macro-generation
$q$ along a causal chain are consecutive.

\subsection{Macro-star counting along causal chains}

The following two consequences of the macro-ancestry and valence bounds will
be used repeatedly in the horizontal analysis.

\begin{lemma}[Counting possible events over macro-elements]\label{lem:eventcount}
Let $q\ge0$ and $\mathcal P\subset\mathcal{U}_q$.  A causal chain in one call of
\texttt{\emph{RefineAMP}} contains at most $7\,\card\mathcal P$ events of generations
$3q$, $3q+1$, or $3q+2$ whose generation-$3q$ ancestors belong to
$\mathcal P$.
\end{lemma}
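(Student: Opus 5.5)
The plan is a direct counting argument: events on a causal chain are pairwise distinct bisections of pairwise distinct tetrahedra, and each macro-element of $\mathcal P$ has only seven possible descendants in the three generations $3q$, $3q+1$, $3q+2$.

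First I will observe that the refinement of each initial tetrahedron is a sub-tree of the uniform AMP refinement tree. \BisectTet{} is deterministic: the children and their marks and flags depend only on the marked parent, by (B1)--(B4). Hence every tetrahedron produced by \texttt{RefineAMP}, in any call and any batch, is a node of the fixed binary tree rooted at its initial ancestor. In particular, a descendant $K$ with $\gen(K)=3q+s$, $s\in\{0,1,2\}$, whose generation-$3q$ ancestor is $Q\in\mathcal U_q$, is one of the $2^s$ depth-$s$ nodes below $Q$. So for each fixed $Q$ there are at most $1+2+4=7$ candidate tetrahedra in these three generations.

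Second, I will show that within one call each such tetrahedron is bisected by at most one event. An event bisects an active tetrahedron, which is then replaced by its two children and ceases to be active. A node of the refinement tree is created only once, namely by the bisection of its unique parent, and it never becomes active again. Therefore the map sending an event to the tetrahedron it bisects is injective.

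Third, I will show that the events along a causal chain are distinct. A causal chain is a directed path in the causal forest of \Cref{def:causalforest}. Since each parent precedes its child, the graph is acyclic and a directed path cannot repeat an event.

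Combining the three steps, the events on the chain of the stated generations with ancestors in $\mathcal P$ inject into
\[
  \bigcup_{Q\in\mathcal P}\{\text{depth-}0,1,2\text{ nodes below }Q\},
\]
a set of cardinality at most $7\,\card\mathcal P$. The only point needing care is the identification of actual descendants with nodes of the uniform tree, so that the generation-$3q$ ancestor is a well-defined element of $\mathcal U_q$. Determinism of \BisectTet{} settles this, so I expect no real obstacle. The bound in fact holds for all events of the call, not just those on one chain.
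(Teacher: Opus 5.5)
Your proposal is correct and follows essentially the paper's argument. Each $Q\in\mathcal P$ has at most $1+2+4=7$ depth-$0,1,2$ nodes below it, and events inject into bisected tetrahedra because a tetrahedron is bisected at most once in a call. Your remarks on the determinism of \BisectTet{} and on the acyclicity of the causal forest only make explicit what the paper's brief proof takes for granted.
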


\begin{proof}
The binary ancestry tree below one $K\in\mathcal{U}_q$ contains respectively
$1$, $2$, and $4$ nodes at depths $0$, $1$, and $2$.  Their total number is
$7$.  During one refinement call a tetrahedron can be bisected only once, so
the events on a causal chain are distinct tetrahedra in these ancestry trees.
\end{proof}

\begin{lemma}[Fixed-edge multiplicity]\label{lem:fixededge}
Fix $q\ge0$ and a geometric edge $e\in\mathcal{E}(\mathcal{U}_q)$.  In one call of
\texttt{\emph{RefineAMP}}, any causal chain contains at most
$7\,\card\omega_q(e)\le7\nu$ events whose bisection edge is exactly $e$.
\end{lemma}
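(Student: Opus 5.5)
The plan is to reduce the count to \Cref{lem:eventcount} applied with $\mathcal P=\omega_q(e)$. Fix a causal chain, and let $K$ be one of its events whose bisection edge is exactly $e$, that is, $\refedge(K)=e$.

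\textbf{Step 1.} I first locate $K$ in the macro-hierarchy. Write $\gen(K)=3j+s$ with $s\in\{0,1,2\}$. By \Cref{lem:redge}, $\mgen(\refedge(K))=j$. Since $\refedge(K)=e$ and $\mgen(e)=q$, this forces $j=q$. So $K$ has generation $3q$, $3q+1$, or $3q+2$.

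\textbf{Step 2.} Next I place the macro-ancestor of $K$ in the edge star. Let $Q\in\mathcal{U}_q$ be the generation-$3q$ ancestor of $K$. \Cref{lem:redge} also gives $\refedge(K)\in\mathcal{E}(Q)$, hence $e\in\mathcal{E}(Q)$. By definition of $\omega_q(e)$, this means $Q\in\omega_q(e)$.

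\textbf{Step 3.} Now I invoke \Cref{lem:eventcount} with $\mathcal P:=\omega_q(e)$. By Steps 1 and 2, every event on the chain with bisection edge $e$ has generation in $\{3q,3q+1,3q+2\}$ and a generation-$3q$ ancestor in $\mathcal P$. Hence the number of such events is at most $7\,\card\omega_q(e)$. Then \Cref{lem:valence} gives $\card\omega_q(e)\le\nu$, which yields the bound $7\nu$.

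The only point needing care is Step 1. The identity $\mgen(\refedge(K))=\lvl(K)$ is exactly what rules out descendants of deeper levels, which might otherwise still contain the geometric edge $e$. In fact such descendants can never bisect $e$, since their refinement edges have strictly larger macro-generation. Once this is in hand, the rest is a direct substitution into \Cref{lem:eventcount}, so I expect no real obstacle.
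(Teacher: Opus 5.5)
Your proposal is correct and follows essentially the same route as the paper. Both use \Cref{lem:redge} to force $\gen(K)\in\{3q,3q+1,3q+2\}$ and to place the macro-ancestor $Q$ in $\omega_q(e)$ via $e\in\mathcal{E}(Q)$, and then conclude with \Cref{lem:eventcount} for $\mathcal P=\omega_q(e)$ together with \Cref{lem:valence}.
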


\begin{proof}
Let an event tetrahedron $K$ satisfy $\refedge(K)=e$.  By
\Cref{lem:redge}, $\lvl(K)=\mgen(e)=q$, so
$\gen(K)\in\{3q,3q+1,3q+2\}$.  If $Q\in\mathcal{U}_q$ is the generation-$3q$
macro-ancestor of $K$, the stronger containment
\eqref{eq:redgeancestor} gives $e\in\mathcal{E}(Q)$; hence
$Q\in\omega_q(e)$.  Each such $Q$ has only the seven possible event nodes at
depths zero, one, and two, and an event tetrahedron is bisected at most once in
the call.  Applying \Cref{lem:eventcount} to
$\mathcal P=\omega_q(e)$ proves the claim.
\end{proof}

\section{Horizontal propagation}
\label{sec:horizontal-propagation}

We prove that every horizontal block defined in the preceding section has
uniformly bounded length.  The proof combines the macro-star counting from
the preceding section with a finite classification of the possible edge
transitions within one AMP macrostep.

\subsection{The horizontal-propagation bound}

\begin{proposition}[Horizontal propagation bound]\label{prop:horizontal}
Let $\mathcal{T}_0$ be an initial AMP marked mesh.  For every \texttt{\emph{RefineAMP}} call on a
descendant of $\mathcal{T}_0$, each horizontal block of a causal chain has at most
\begin{equation*}
 H_{\mathrm{AMP}}
 :=
 \max\{448\,\card\mathcal{T}_0,\;630\,\nu\}
\end{equation*}
events, where $\nu$ is the uniform macro-vertex valence from
\Cref{lem:valence}.
\end{proposition}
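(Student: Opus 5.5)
Fix a causal chain $K_0,\ldots,K_J$ in one call and a horizontal block $K_r,\ldots,K_s$ with common macro-generation $q$, so that $\mgen(\refedge(K_j))=q$ and, by \Cref{lem:redge}, $\lvl(K_j)=q$, i.e.\ $\gen(K_j)\in\{3q,3q+1,3q+2\}$ for $r\le j\le s$. The plan is to split the block into maximal runs of consecutive events with the same bisection edge, and to bound separately the length of each run and the number of runs. \Cref{lem:fixededge} already gives at most $7\nu$ events per edge $e\in\mathcal{E}(\mathcal U_q)$ over the whole chain, which covers all runs using that edge. It remains to control how many distinct edges of macro-generation $q$ the block can visit. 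This is where the horizontal structure enters.

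For a different-edge transition $K_j\to K_{j+1}$ inside the block, set $e_1=\refedge(K_j)$ and $e_2=\refedge(K_{j+1})$. Then $e_1\in\mathcal{E}(K_{j+1})$, $e_1$ is bisected (its midpoint, or a face-tree first split through $\mu_F=e_1$ by \Cref{lem:causaledge}), and $\mgen(e_1)=\mgen(e_2)=q$. I will classify such transitions by the microgeneration $s\in\{0,1,2\}$ of $K_{j+1}$ and by the AMP type (from \ref{A2}), using the five physical edge classes of a macro-tetrahedron $Q\in\mathcal U_q$ and its first two microgenerations.

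For $s=2$, \Cref{lem:firstmicrofacemarks} gives $\mathcal{E}(Q)\cap\mathcal{E}(K_{j+1})=\{\refedge(K_{j+1})\}$. Since $e_1$ is a generation-$q$ edge of $K_{j+1}$ and all generation-$q$ edges there lie in $\mathcal{E}(Q)$, this forces $e_1=e_2$. So $s=2$ never produces a genuine edge change. For $s=0,1$ I will tabulate, for each type, which pairs $(e_1,e_2)$ of macro-edges of $Q$ can occur. The aim is to show the two transition tables always move in a restricted way: either the new edge shares a vertex with the old one within one macro-star, or the transition is forced into a configuration where it occurs at most a fixed number of times per macro-element.

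Two counting mechanisms then give the bound. When the transitions stay within a vertex star, the block is confined to the macro-tetrahedra of $\omega_q(z)$ for a bounded number of pivot vertices, and \Cref{lem:eventcount} with $\card\omega_q(z)\le\nu$ bounds the events by a multiple of $\nu$. Otherwise, the block moves across macro-tetrahedra of $\mathcal U_q$ with each visit charged to a macro-tetrahedron. Since $\card\mathcal U_q=8^q\card\mathcal T_0$ is not uniform, the real argument must show these transitions cannot repeat at generation $q$ beyond the initial-mesh combinatorics: they arise only from $M,O$ types (at $q=0$) or from a bounded number of ancestor configurations per initial tetrahedron, giving a multiple of $\card\mathcal T_0$ such as $448=64\cdot7$. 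Combining the cases, with constants tracked, should give $\max\{448\card\mathcal T_0,630\nu\}$, where $630=90\cdot7$ reflects at most $90$ edge-star classes each carrying $7\nu$ events.

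The main obstacle is the finite case analysis of different-edge transitions at $s=0,1$. I must show that a long horizontal block cannot wander indefinitely through distinct generation-$q$ edges, which amounts to excluding same-level cyclic propagation. My first attempt would be a potential argument: order the generation-$q$ edges within each macro-star by AMP's face marks (the refinement edge is maximal in the inherited order), and show that each different-edge transition strictly increases this order or exits to a new star only finitely often. Failing that, I would enumerate the type/edge-class transition tables explicitly.
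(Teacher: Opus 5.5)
\textbf{There is a genuine gap.}

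Your reduction is sound as far as it goes. Splitting a block into fixed-edge runs controlled by \Cref{lem:fixededge} is right, and so is excluding different-edge transitions into microgeneration two via \Cref{lem:firstmicrofacemarks}; that is exactly \Cref{lem:horizontalreduction}. But the actual content of the proposition is why a block cannot visit unboundedly many distinct macro-generation-$q$ edges, and you defer this to an unspecified ``tabulation'' or ``potential argument''. The mechanisms you sketch would not close it.

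First, classifying $e_1,e_2$ relative to $Q\in\mathcal{U}_q$ gives no information, since both are simply edges of $Q$. Second, knowing that consecutive edges lie in a common macro-tetrahedron or share a vertex does not confine the block: such a chain can drift through arbitrarily many stars. What is needed is an invariant preserved along the whole block, and it lives on coarser macro-meshes.

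Your second counting mechanism is also misdirected. The term $448\,\card\mathcal{T}_0$ is only the trivial count $7\cdot 8^q\,\card\mathcal{T}_0$ of all possible event tetrahedra for $q\le2$. There the transition tables are unavailable, because types $M,O$ occur and $\mathcal{U}_{q-2}$ is not yet of the persistent types $A,P_u$. For $q\ge3$ nothing is charged to $\card\mathcal{T}_0$, and $630\nu$ is not ``$90$ star classes times $7\nu$''.

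The missing idea is a two-level projection onto coarser macro-meshes.
\begin{enumerate}
\item Classify each edge of $\mathcal{U}_q$ by its minimal carrier in $\mathcal{U}_{q-1}$, giving the classes $H,V,M_{\mathrm f},I_A,I_P$ of \Cref{def:classes}.
\item The finite enumeration shows (\Cref{lem:classtable}) that the class graph is acyclic apart from the self-loops $H\to H$, $V\to V$, $M_{\mathrm f}\to M_{\mathrm f}$. So a block splits into at most five class sub-blocks.
\item The two interior sub-blocks use a single edge, giving $7\nu$ events each.
\item The $H$- and $V$-sub-blocks preserve a common anchor $a\in\mathcal{V}(\mathcal{U}_{q-1})$. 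They therefore live over $\omega_{q-1}(a)$ and have at most $56\nu$ events each by \Cref{lem:eventcount}.
\item In an $M_{\mathrm f}$-sub-block the edges really do move across macro-tetrahedra, so you must project again. By \Cref{lem:Mprojection}, $\sigma(e)$ (the mark of the carrier face) either stays fixed, which confines the run to an edge star and gives $56\nu$ events, or follows a spine relation $\mu_i(K)\rightsquigarrow\refedge(K)$ in $\mathcal{U}_{q-1}$.
\item By \Cref{lem:spinetable}, after the first nontrivial spine step all projected edges are $H$/$V$-edges relative to $\mathcal{U}_{q-2}$ with one anchor $a$. This confines the tail to the $8^2\nu$ macro-tetrahedra over $\omega_{q-2}(a)$ and gives at most $448\nu$ further events.
\end{enumerate}
Summing gives $2\cdot7\nu+504\nu+56\nu+56\nu=630\nu$. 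Without this anchor invariant at level $q-1$, and its second-level version at $q-2$ for medial edges, your plan has no way to exclude unbounded same-level wandering.
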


The proof of \Cref{prop:horizontal} is given in
\Cref{sec:horizontal-proof} after the finite transition analysis below.

By \Cref{lem:fixededge}, it remains to control transitions between distinct
bisection edges in a horizontal block.

\subsection{The five edge classes in one AMP macrostep}

For $q\ge2$, every edge of $\mathcal{U}_q$ is classified relative to $\mathcal{U}_{q-1}$.
Let $K\in\mathcal{U}_{q-1}$ and set $m_{ab}:=\operatorname{mid}([a,b])$ for each edge
$[a,b]$ of $K$.  AMP's description of the $25$ edges created by three uniform
bisections gives the following classes.

\begin{definition}[Macro-edge classes]\label{def:classes}
\begin{enumerate}[label=(\roman*)]
\item $H$ (\emph{half-edge}):\newline
      $[a,m_{ab}]$ or $[b,m_{ab}]$, where $[a,b]\in\mathcal{E}(K)$.
\item $V$ (\emph{vertex-to-midpoint spoke}): if a face $[a,b,c]$ is marked by
      $[a,b]$, the edge $[c,m_{ab}]$.
\item $M_{\mathrm f}$ (\emph{face-medial edge}): if a face $[a,b,c]$ is marked by
      $[a,b]$, one of
      \[
        [m_{ab},m_{ac}],\qquad [m_{ab},m_{bc}].
      \]
\item $I_A$ or $I_P$ (\emph{interior edge}): the unique edge joining the
      midpoint of the refinement edge of $K$ to the midpoint of the opposite
      edge, according as $K$ is of type $A$ or $P_u$.
\end{enumerate}
We write $\operatorname{cls}(e)\in\{H,V,M_{\mathrm f},I_A,I_P\}$ for the class of an edge.
For an $H$- or $V$-edge, $\alpha(e)$ denotes its unique endpoint that is a
vertex of $\mathcal{U}_{q-1}$ and is called its \emph{anchor}.  For an
$M_{\mathrm f}$-edge $e$,
its minimal carrier is a unique face
$\operatorname{car}(e)\in\mathcal{F}(\mathcal{U}_{q-1})$; we define
\begin{equation*}
   \sigma(e):=\text{the marked edge of }\operatorname{car}(e)
   \in\mathcal{E}(\mathcal{U}_{q-1}).
\end{equation*}
\end{definition}

For the finite checks below, work on the labeled tetrahedron $[0,1,2,3]$,
set $\mu_{ijk}:=\mu_{[i,j,k]}$, and denote the canonical type-$A$ and
type-$P_u$ marking patterns by
\[
\begin{array}{c|c|cccc}
 K&\refedge(K)&\mu_{012}&\mu_{013}&\mu_{023}&\mu_{123}\\ \hline
 K_A&[0,3]&[0,2]&[0,3]&[0,3]&[1,3]\\
 K_{P_u}&[0,2]&[0,2]&[0,1]&[0,2]&[1,2]
\end{array}
\]
Both flags are unset.

\begin{lemma}[Canonical AMP reduction]
\label{lem:canonicalmacrocycle}
Up to a permutation of vertex labels, the fully marked patterns of types $A$
and $P_u$ are $K_A$ and $K_{P_u}$, respectively.  Since $\BisectTet$ is
equivariant under such permutations, their depth-three descendants exhaust
all AMP macrochildren up to relabeling.
\end{lemma}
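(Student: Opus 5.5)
The plan is to classify the marked tetrahedra of types $A$ and $P_u$ directly from \Cref{def:markedtetrahedron}, by enumerating the admissible face marks relative to the refinement edge. The equivariance claim then follows from the purely combinatorial form of (B1)--(B4).

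\emph{Step 1 (normal form).} Let $K$ be a marked tetrahedron with refinement edge $[a,b]$ and remaining vertices $c,d$.
\begin{itemize}[leftmargin=2.1em]
\item The refinement faces $[a,b,c]$ and $[a,b,d]$ are both marked by $[a,b]$.
\item The nonrefinement faces are $[a,c,d]$ and $[b,c,d]$.
\item For types $A$ and $P$, each nonrefinement mark meets $[a,b]$. So $\mu_{acd}\in\{[a,c],[a,d]\}$ and $\mu_{bcd}\in\{[b,c],[b,d]\}$, giving four combinations.
\end{itemize}
The four marks are coplanar exactly when they share a face containing $[a,b]$. That happens iff both nonrefinement marks use the same third vertex, i.e.\ the pairs $([a,c],[b,c])$ or $([a,d],[b,d])$. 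The other two combinations, $([a,c],[b,d])$ and $([a,d],[b,c])$, are type $A$. In each case the two options are exchanged by the transposition $c\leftrightarrow d$, which fixes $[a,b]$ and both refinement faces. Hence each type has a single pattern up to relabeling.

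\emph{Step 2 (matching the tables).} For $P_u$, take the representative $([a,c],[b,c])$ and set $a=0$, $b=2$, $c=1$, $d=3$. Then $\refedge=[0,2]$ and $\mu_{012}=\mu_{023}=[0,2]$. The nonrefinement faces give $\mu_{013}=[0,1]$ and $\mu_{123}=[1,2]$, which is $K_{P_u}$.

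For $A$, take the representative $([a,c],[b,d])$ and set $a=0$, $b=3$, $c=2$, $d=1$. Then $\refedge=[0,3]$ and $\mu_{013}=\mu_{023}=[0,3]$. The nonrefinement faces give $\mu_{012}=\mu_{acd}=[0,2]$ and $\mu_{123}=\mu_{bcd}=[1,3]$, which is $K_A$.

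Flags are unset in both cases: type $A$ always has an unset flag, and $P_u$ is unset by definition. So the full marked data coincide.

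\emph{Step 3 (equivariance and exhaustion).} The rules (B1)--(B4) refer only to incidence data: the refinement edge, its midpoint, inherited, cut and new faces, edges opposite $m$, the common endpoint $w$, and the flag. Hence for any vertex permutation $\pi$, $\BisectTet(\pi K)=\pi\,\BisectTet(K)$. Induction on depth extends this to the depth-three descendants.

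The edge classes are used for $q\ge2$, so the parent $K\in\mathcal{U}_{q-1}$ has $q-1\ge1$. By \ref{A1}, such $K$ is of type $A$ or $P_u$. Steps 1--2 then identify $K$ with $K_A$ or $K_{P_u}$ up to relabeling, so its $3$-fold descendants are relabelings of those of the canonical patterns.

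The only delicate point is the coplanarity criterion in Step 1. It needs the observation that four edges of a tetrahedron containing $[a,b]$ lie in a common plane only if they lie in one face through $[a,b]$, which is immediate since $a,b,c,d$ are affinely independent. The remaining checks are routine bookkeeping.
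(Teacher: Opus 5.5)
Your proof is correct and follows the same route as the paper. You classify by the two nonrefinement marks: these are coplanar exactly when they share the third vertex, which gives $P_u$, and otherwise the tetrahedron is of type $A$. You then relabel to $K_{P_u}$ and $K_A$ and iterate the permutation-equivariance of (B1)--(B4), spelling out the explicit relabelings the paper leaves implicit. Two small refinements: the induction over depth needs $\pi$ extended to new vertices via $\pi(\operatorname{mid}([a,b]))=\operatorname{mid}([\pi(a),\pi(b)])$, as the paper states explicitly. Also, $c\leftrightarrow d$ swaps the two refinement faces rather than fixing each one, which is harmless because both carry the mark $[a,b]$.
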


\begin{proof}
For type $P_u$, coplanarity forces the two nonrefinement marks to meet at one
vertex outside the refinement edge, which gives the second row after
relabeling.  For type $A$, they meet the two endpoints of the refinement edge,
and noncoplanarity forces their other endpoints to be distinct, which gives
the first row.  Extend a relabeling $\pi$ by
$\pi(\operatorname{mid}([a,b])):=
\operatorname{mid}([\pi(a),\pi(b)])$.
Rules \textnormal{(B1)}--\textnormal{(B4)} use only face incidences, opposite
edges, transported midpoints, and the flag, and hence commute with relabeling.
Iteration proves the claim.
\end{proof}

Every edge of $\mathcal{U}_q$ has a unique minimal carrier in the conforming mesh
$\mathcal{U}_{q-1}$.  Its dimension distinguishes the half-edge class $H$,
the face-carried classes $V$ and $M_{\mathrm f}$, and the interior classes
$I_A$ and $I_P$,
while within a carrier face the presence of an old endpoint distinguishes
$V$ from $M_{\mathrm f}$.  Since shared faces have the same AMP mark, both
$\operatorname{cls}$ and $\sigma$ are independent of the incident
macro-tetrahedron.  Applying \textnormal{(B1)}--\textnormal{(B4)} for three
generations gives, for either parent type, exactly $12$ edges of class $H$,
$4$ of class $V$, $8$ of class $M_{\mathrm f}$, and one of class $I_A$ or $I_P$.  Their
total is $25$, so \Cref{lem:canonicalmacrocycle} makes the classification
exhaustive.

\subsection{Finite horizontal transition tables}

We first localize every different-edge horizontal transition to one
macro-tetrahedron and its first microgeneration.

\begin{lemma}[Localization of horizontal transitions]
\label{lem:horizontalreduction}
Let $q\ge2$ and let $K\to T$ be consecutive events in a horizontal
macro-generation-$q$ block.  Set
$e_1:=\refedge(K)$ and $e_2:=\refedge(T)$, and suppose that $e_1\ne e_2$.
If $Q\in\mathcal{U}_q$ is the generation-$3q$ macro-ancestor of $T$, then
\[
 e_1,e_2\in\mathcal{E}(Q)\cap\mathcal{E}(T),
 \qquad
 \gen(T)\in\{3q,3q+1\}.
\]
\end{lemma}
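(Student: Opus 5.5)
The plan is to combine \Cref{lem:redge}, the causal relation $e_1\in\mathcal{E}(T)$, and the depth-two intersection identity \eqref{eq:depthtwointersection}.

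First, $e_2\in\mathcal{E}(Q)$. Since the block is horizontal, $\mgen(e_1)=\mgen(e_2)=q$. \Cref{lem:redge} applied to $T$ gives $\lvl(T)=q$, so $\gen(T)=3q+s$ with $s\in\{0,1,2\}$, and also $e_2=\refedge(T)\in\mathcal{E}(Q)$.

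Second, $e_1\in\mathcal{E}(Q)$. By construction of the causal forest we have $e_1\in\mathcal{E}(T)$, and $\mgen(e_1)=q$ means $e_1\in\mathcal{E}(\mathcal{U}_q)$. I would show that an edge of $T$ with macro-generation $q$ is an edge of $Q$. Every edge of $T$ lies, as a segment, inside $Q$. Within the uniform tree below $Q$ up to depth two, the edges are the six edges of $Q$, their half-edges, and new edges joining midpoints or vertices through the interior of faces or of $Q$. The half-edges and new edges either have macro-generation $q+1$ (those appearing in $\mathcal{U}_{q+1}$), or are transient; in the latter case they are not edges of $\mathcal{U}_q$ at all.

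The cleanest way to make this rigorous is to use the partition of geometric edges into the sets $\mathcal{E}(\mathcal{U}_p)$, together with conformity of $\mathcal{U}_q$. Suppose $e_1\in\mathcal{E}(\mathcal{U}_q)$ is contained in $Q$, and both its endpoints are vertices of $T$, hence lie in $Q$. Then conformity of $\mathcal{U}_q$ forces $e_1$ to be an edge of $Q$. Indeed, $e_1$ belongs to some $Q'\in\mathcal{U}_q$, and $Q\cap Q'$ is a common subsimplex containing the segment $e_1$ with both its endpoints. An edge of $Q'$ contained in that common subsimplex is an edge of it, hence of $Q$. The one point needing care is that $e_1\subset Q$ only guarantees $e_1\subset Q\cap Q'$ when the segment's interior meets $Q$, which holds because $e_1\subset T\subset Q$.

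Third, the generation bound. Suppose $s=2$. Then $T$ is a grandchild of the unflagged $Q$ (unflagged by \ref{A1}), and \eqref{eq:depthtwointersection} gives $\mathcal{E}(Q)\cap\mathcal{E}(T)=\{\refedge(T)\}=\{e_2\}$. But $e_1$ lies in this intersection and $e_1\ne e_2$, a contradiction. Hence $\gen(T)\in\{3q,3q+1\}$.

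The main obstacle is the second step, justifying $e_1\in\mathcal{E}(Q)$ from $e_1\in\mathcal{E}(T)\cap\mathcal{E}(\mathcal{U}_q)$. I expect the conformity argument in $\mathcal{U}_q$ to close it. The hypothesis $q\ge2$ is presumably only needed later for the class analysis, not here.
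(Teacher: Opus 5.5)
Your proposal is correct and follows the paper's proof essentially step for step. \Cref{lem:redge} gives $\lvl(T)=q$ and $e_2\in\mathcal{E}(Q)$; conformity of $\mathcal{U}_q$, applied to some $Q'\in\mathcal{U}_q$ with $e_1\in\mathcal{E}(Q')$ together with $e_1\subset T\subset Q$, gives $e_1\in\mathcal{E}(Q)$; and \eqref{eq:depthtwointersection} for the unflagged $Q$ rules out $\gen(T)=3q+2$. Your caveat about the segment's interior meeting $Q$ is unnecessary, since $e_1\subset Q'$ and $e_1\subset Q$ already give $e_1\subset Q\cap Q'$, and you are right that $q\ge2$ plays no role here.
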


\begin{proof}
Since the two events belong to a horizontal macro-generation-$q$ block,
\[
 \mgen(e_1)=\mgen(e_2)=q.
\]
\Cref{lem:redge} therefore gives
$\lvl(K)=\lvl(T)=q$, so $T$ has the stated ancestor $Q\in\mathcal{U}_q$.
The causal relation gives $e_1\in\mathcal{E}(T)$, while
$\mgen(e_1)=q$ gives $e_1\in\mathcal{E}(\mathcal{U}_q)$.  Choose $Q_1\in\mathcal{U}_q$ with
$e_1\in\mathcal{E}(Q_1)$.  If $Q_1=Q$, there is nothing to prove.  Otherwise
\[
e_1\subset Q_1\cap Q,
\]
and conformity of $\mathcal{U}_q$ implies that $Q_1\cap Q$ is a common edge or face.
In either case $e_1\in\mathcal{E}(Q)$.

Applying \Cref{lem:redge} to $T$ gives
$e_2\in\mathcal{E}(Q)$, while $e_2\in\mathcal{E}(T)$ by definition.
This proves the first assertion.

Write
\[
\gen(T)=3q+s,
\qquad
s\in\{0,1,2\}.
\]
If $s=2$, $Q$ is unflagged by item~\ref{A1}, and
\Cref{lem:firstmicrofacemarks} gives
\[
\mathcal{E}(Q)\cap\mathcal{E}(T)=\{e_2\}.
\]
Together with the first assertion, this implies
$e_1=e_2$, contrary to the assumption.  Hence $s\in\{0,1\}$, which
proves the second assertion.
\end{proof}

\subsubsection{The full class table}

The checks below use one common finite enumeration.  Starting from the
representatives $K_A$ and $K_{P_u}$, apply $\BisectTet$ uniformly for three
generations.
This gives eight fully marked macrochildren $Q$ for each parent.  For every
$Q$, inspect the three possible
targets consisting of $Q$ itself and its two children.  For each target $T$,
set $e_2:=\refedge(T)$ and let $e_1$ range over
\[
 e_1\in\bigl(\mathcal{E}(Q)\cap\mathcal{E}(T)\bigr)\setminus\{e_2\}.
\]
Thus there are $2\cdot8\cdot3=48$ target configurations, and every candidate
edge $e_1$ in each configuration is inspected.  In addition to the class and
anchor of each edge pair, we record its $\sigma$-pair when both edges have
class $M_{\mathrm f}$.
For $q\ge2$, item~\ref{A1} shows that the macro-parent in $\mathcal{U}_{q-1}$ is of
type $A$ or $P_u$.  Hence
\Cref{lem:canonicalmacrocycle,lem:horizontalreduction} show that this check
contains every different-edge horizontal transition up to relabeling.
Relabeling preserves classes and transports anchors and $\sigma$-edges, so
the recorded data are exhaustive as well.

\begin{lemma}[Horizontal class transitions]\label{lem:classtable}
Let $q\ge2$.  Suppose that two consecutive events in a horizontal
macro-generation-$q$ block bisect distinct edges $e_1$ and $e_2$.  Then their
ordered class pair is one of those indicated in
\begin{equation}\label{eq:classpairs}
\begin{array}{c|ccccc}
 \operatorname{cls}(e_1)\backslash\operatorname{cls}(e_2)
   &H&V&M_{\mathrm f}&I_A&I_P\\ \hline
 H   &\bullet&\bullet&&&\\
 V   &&\bullet&&&\\
 M_{\mathrm f}   &\bullet&\bullet&\bullet&\bullet&\\
 I_A &&\bullet&&&\\
 I_P &&\bullet&\bullet&&
\end{array}
\end{equation}
Moreover, each of the transitions
\[
 H\to H,\qquad H\to V,\qquad V\to V,
\]
preserves the anchor:
\[
  \alpha(e_1)=\alpha(e_2).
\]
\end{lemma}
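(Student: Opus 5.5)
Lemma~\ref{lem:classtable} is a finite verification, and I will prove it by the enumeration set up just before its statement. By \Cref{lem:horizontalreduction}, every different-edge horizontal transition $K\to T$ with $q\ge2$ has $e_1,e_2\in\mathcal{E}(Q)\cap\mathcal{E}(T)$, where $Q\in\mathcal{U}_q$ is the macro-ancestor of $T$ and $\gen(T)\in\{3q,3q+1\}$. So $T$ is either $Q$ or one of its two children.

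By item~\ref{A1}, the macro-parent of $Q$ in $\mathcal{U}_{q-1}$ has type $A$ or $P_u$. \Cref{lem:canonicalmacrocycle} then reduces to the parents $K_A$ and $K_{P_u}$, up to a relabeling. Relabeling preserves $\operatorname{cls}$ and transports anchors, so both the class pair and the anchor relation are invariant. It therefore suffices to inspect the $48$ configurations $(Q,T)$ and, in each, every candidate $e_1\in(\mathcal{E}(Q)\cap\mathcal{E}(T))\setminus\{e_2\}$.

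The concrete steps are as follows. First, for each of $K_A$ and $K_{P_u}$, I apply (B1)--(B4) three times and record the eight macrochildren $Q$ with all face marks and refinement edges. I label every one of the $25$ edges by its class using \Cref{def:classes}: the carrier dimension of the edge, and, for face-carried edges, whether one endpoint is an old vertex. I also record the anchor $\alpha$ of every $H$- and $V$-edge.

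Second, for each $Q$, I compute $\refedge(Q)$ and its two children $T$ with their refinement edges. These are marks of inherited faces, so they are edges of $Q$ by \Cref{lem:firstmicrofacemarks} (applicable since $Q$ is unflagged). I then list $\mathcal{E}(Q)\cap\mathcal{E}(T)$, which is all six edges when $T=Q$ and three edges when $T$ is a child. This restriction to three edges is what prunes most pairs in \eqref{eq:classpairs}.

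Third, I tabulate $(\operatorname{cls}(e_1),\operatorname{cls}(e_2))$ and check that every occurring pair is marked with a bullet. For the pairs $H\to H$, $H\to V$ and $V\to V$, I check $\alpha(e_1)=\alpha(e_2)$. Geometrically this should hold because a shared edge pair of $Q$ or of a child that meets at a vertex of $\mathcal{U}_{q-1}$ is forced to meet at the common anchor: the other endpoints are midpoints, and two $H/V$-edges lying in a common tetrahedron of generation $3q$ or $3q+1$ share a vertex. I would verify it case by case rather than rely on this heuristic.

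The main obstacle is purely bookkeeping: producing the three-generation uniform refinement of both canonical parents correctly. The delicate points are the $P_f$ rule (B3) for the new face at the second bisection, and the types recorded at each depth, which must be consistent with \ref{A2}. A single wrong mark would change refinement edges and hence the table.

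To control errors, I will cross-check the counts $12,4,8,1$ of $H$, $V$, $M_{\mathrm f}$ and $I$ edges. I will also check that each $Q$ has type $A$ or $P_u$, as \ref{A1} requires. If necessary, the enumeration can be machine-checked; the paper's numerical section suggests such a script is natural.
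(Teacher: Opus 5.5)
Your proposal is correct and is essentially the paper's own proof. It uses the same $48$-configuration enumeration over $K_A$ and $K_{P_u}$, made exhaustive by \Cref{lem:horizontalreduction,lem:canonicalmacrocycle} and item~\ref{A1}, with the class pairs and anchors read off from the recorded edge pairs. One sharpening of your anchor heuristic: every $Q\in\mathcal{U}_q$ contains exactly one vertex of $\mathcal{U}_{q-1}$, because by \Cref{lem:firstmicrofacemarks} the depth-two descendant has exactly two old vertices joined by its refinement edge, and the third bisection separates them. Since $e_1,e_2\in\mathcal{E}(Q)$, anchor preservation is therefore automatic, whereas merely sharing a vertex would not suffice, since the shared vertex could be a midpoint.
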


\begin{proof}
Applying the enumeration above to a type-$A$ parent gives
\[
 \{H\!\to\!H,\ H\!\to\!V,\ V\!\to\!V,\ M_{\mathrm f}\!\to\!H,
   \ M_{\mathrm f}\!\to\!V,\ M_{\mathrm f}\!\to\!M_{\mathrm f},
   \ M_{\mathrm f}\!\to\!I_A,\ I_A\!\to\!V\}.
\]
For a type-$P_u$ parent the same enumeration gives
\[
 \{H\!\to\!H,\ H\!\to\!V,\ V\!\to\!V,\ M_{\mathrm f}\!\to\!H,
   \ M_{\mathrm f}\!\to\!V,\ M_{\mathrm f}\!\to\!M_{\mathrm f},
   \ I_P\!\to\!M_{\mathrm f},\ I_P\!\to\!V\}.
\]
Their union is exactly \eqref{eq:classpairs}.  Inspecting the corresponding
edge pairs before passing to classes shows that every $H\to H$, $H\to V$, or
$V\to V$ pair has the same old endpoint on both sides.  This proves the
anchor assertion.  The completeness statement preceding the lemma shows that
the table is exhaustive.
\end{proof}

Ignoring the three self-arrows $H\to H$, $V\to V$, and
$M_{\mathrm f}\to M_{\mathrm f}$, the class
graph is acyclic:
\[
I_P\longrightarrow M_{\mathrm f}\longrightarrow
\begin{cases}
H\longrightarrow V,\\
I_A\longrightarrow V,\\
V,
\end{cases}
\qquad
I_P\longrightarrow V.
\]
Thus only an $M_{\mathrm f}\to M_{\mathrm f}$ block requires more than a
vertex-patch argument.

\subsubsection{The spine relation}

For a type-$A$ or type-$P_u$ macro-tetrahedron $K$, let
$\mu_1(K),\mu_2(K)$ denote the marked edges of its two nonrefinement faces.
Define the two \emph{spine relations}
\begin{equation*}
   \mu_i(K)\rightsquigarrow\refedge(K),
   \qquad i\in\{1,2\}.
\end{equation*}
In canonical labels they are
\[
 \begin{array}{c|c}
 K_A&[0,2]\rightsquigarrow[0,3],\quad
      [1,3]\rightsquigarrow[0,3],\\
 K_{P_u}&[0,1]\rightsquigarrow[0,2],\quad
          [1,2]\rightsquigarrow[0,2].
 \end{array}
\]

\begin{lemma}[Spine class transitions]\label{lem:spinetable}
Let $p\ge2$, let $K\in\mathcal{U}_p$, and let $i\in\{1,2\}$.  With classes taken
relative to $\mathcal{U}_{p-1}$, the possible ordered pairs
$(\operatorname{cls}(\mu_i(K)),\operatorname{cls}(\refedge(K)))$ are exactly
those indicated in
\[
\begin{array}{c|ccccc}
 \operatorname{cls}(\mu_i(K))\backslash\operatorname{cls}(\refedge(K))
   &H&V&M_{\mathrm f}&I_A&I_P\\ \hline
 H   &\bullet&\bullet&&&\\
 V   &&\bullet&&&\\
 M_{\mathrm f}   &\bullet&\bullet&&&\\
 I_A &&\bullet&&&\\
 I_P &&&&&
\end{array}
\]
For $H\to H$, $H\to V$, and $V\to V$, the anchor is preserved:
\[
 \alpha(\mu_i(K))=\alpha(\refedge(K)).
\]
\end{lemma}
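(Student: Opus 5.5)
The plan is to reduce the statement to the same finite enumeration used for \Cref{lem:classtable}, this time reading off the spine pairs instead of causal pairs.

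\emph{Reduction to canonical macrochildren.} Since $p\ge2$, item~\ref{A1} of \Cref{prop:AMPmacrostructure} shows that $K\in\mathcal{U}_p$ is of type $A$ or $P_u$. The same item shows that its macro-parent $K'\in\mathcal{U}_{p-1}$ is also of type $A$ or $P_u$. By \Cref{lem:canonicalmacrocycle}, after a relabeling $K'$ is $K_A$ or $K_{P_u}$, and $K$ is one of the eight depth-three descendants of that representative. Because the marking data of $K$ are intrinsic (\Cref{lem:facetree}), they are the same whichever incident macro-tetrahedron is used. The class map $\operatorname{cls}$ and the anchor $\alpha$ are defined through minimal carriers in the conforming mesh $\mathcal{U}_{p-1}$, so they are also independent of the choice of $K'$. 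Relabeling preserves classes and transports anchors. Hence it suffices to check the $2\cdot8=16$ canonical macrochildren, each with its two spine pairs, giving $32$ ordered pairs in total.

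\emph{The enumeration.} For each canonical macrochild $Q$ (of type $A$ or $P_u$ by \ref{A1}), I will read off three things: $\refedge(Q)$, and the marks $\mu_1(Q),\mu_2(Q)$ of its two nonrefinement faces. These are computed by applying (B1)--(B4) three times. I will then classify each edge by \Cref{def:classes}. An edge whose minimal carrier is an edge of $K'$ is in $H$. An edge carried by a face of $K'$ is in $V$ if it has an old endpoint and in $M_{\mathrm f}$ otherwise. The remaining edge is $I_A$ or $I_P$ according to the type of $K'$.

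Two structural facts cut down the work and explain the zero columns. First, by \Cref{lem:redge} the refinement edge of $Q$ is an edge of $Q$ inherited from generation $3(p-1)+2$. Its mark comes from an inherited face chain, and by \Cref{lem:firstmicrofacemarks} applied at the previous level it can never be an interior or face-medial edge. Concretely, the enumeration should show that $\refedge(Q)$ always has class $H$ or $V$, which explains the empty $M_{\mathrm f}$, $I_A$ and $I_P$ columns. Second, $\mu_i(Q)$ never has class $I_P$. The $I_P$ edge is the mark of the new face created by (B3) in the flagged step, namely $[m,w]$, and the enumeration should confirm that this face always ends up as a refinement face of its macrochild.

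With these two facts in hand, the table is obtained by recording the $32$ pairs. The anchor claim is checked at the same time: for each $H\to H$, $H\to V$ and $V\to V$ pair I compare the old endpoints directly.

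\emph{Main obstacle.} The difficulty is the bookkeeping of the three-generation computation, in particular the $P_f$ step with rule (B3), which can send a mark onto a new spoke $[m,w]$. The statement says ``exactly'', so every bullet must also be shown to be realized. I therefore expect to present the $16$ macrochildren in an explicit table listing $\refedge$, $\mu_1$, $\mu_2$ and their classes, and then verify both the absence of the unmarked entries and the occurrence of each marked one from that table.
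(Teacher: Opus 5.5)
Your route is the paper's. Item~\ref{A1} and \Cref{lem:canonicalmacrocycle} reduce the claim to the $16$ depth-three descendants of $K_A$ and $K_{P_u}$, and the lemma is read off from each descendant's $\refedge$ and its two nonrefinement marks. The explicit table you promise is exactly the paper's proof, and your first structural fact ($\refedge(Q)$ always of class $H$ or $V$) is correct.

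Your second structural fact is false, and it must not be used to prune the enumeration.
\begin{itemize}
\item The exceptional \textnormal{(B3)} mark $[m,w]$ is never the $I_P$ edge. For a $P_u$ parent the flagged tetrahedra are the children, so the $[m,w]$ marks arise in the second bisection. In canonical labels they are the spokes $[m_{01},3]$ and $[m_{12},3]$, of class $V$. These faces do end up as inherited faces of macrochildren, so your argument for the first fact must account for these spokes, because \Cref{lem:firstmicrofacemarks} cannot be applied to a flagged tetrahedron. They have nothing to do with $I_P$.
\item The actual reason the $I_P$ row is empty is different. The edge $I_P=[m_{02},m_{13}]$ first appears in the third bisection, whose parent is of type $A$ and hence unflagged. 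By the first part of \Cref{lem:firstmicrofacemarks}, every face mark of a macrochild is then an edge of that parent, and $m_{13}$ is not one of its vertices.
\item For a type-$A$ parent the flagged step is the third bisection, and there your claim fails outright. The $[m,w]$-face can never become a refinement face: each child's refinement edge is an inherited old edge, while $[m,w]$ contains the new midpoint. So $[m,w]$ is a nonrefinement mark of both macrochildren. In canonical labels this gives, for example,
$[m_{01},m_{03}]\rightsquigarrow[0,m_{03}]$ ($M_{\mathrm f}\to H$),
$[m_{01},m_{03}]\rightsquigarrow[1,m_{03}]$ ($M_{\mathrm f}\to V$), and
$[m_{12},m_{03}]\rightsquigarrow[2,m_{03}]$ ($I_A\to V$, since $[m_{12},m_{03}]$ is the $I_A$ edge).
\end{itemize}
Trusting the heuristic would therefore delete the $I_A$ row of the table. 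Since you commit to listing and checking all $32$ pairs explicitly, the proof is sound once this remark is corrected.
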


\begin{proof}
Inspecting the two spine relations of every macrochild in the common finite
enumeration gives, for a type-$A$ parent,
\[
 \{H\!\to\!H,\ H\!\to\!V,\ V\!\to\!V,
   \ M_{\mathrm f}\!\to\!H,\ M_{\mathrm f}\!\to\!V,
   \ I_A\!\to\!V\},
\]
while for a type-$P_u$ parent they are
\[
 \{H\!\to\!H,\ H\!\to\!V,\ V\!\to\!V,
   \ M_{\mathrm f}\!\to\!H,\ M_{\mathrm f}\!\to\!V\}.
\]
This gives the stated table.  The anchor assertion follows directly from the
recorded edge pairs.  The parent in $\mathcal{U}_{p-1}$ has type $A$ or $P_u$ by
item~\ref{A1}, so \Cref{lem:canonicalmacrocycle} makes the enumeration
exhaustive.
\end{proof}

\subsubsection{Projection of medial transitions}

\begin{lemma}[Medial-to-spine projection]\label{lem:Mprojection}
Let $q\ge2$, and let $e_1\to e_2$ be a different-edge horizontal transition
with $\operatorname{cls}(e_1)=\operatorname{cls}(e_2)=M_{\mathrm f}$.  Then either
$\sigma(e_1)=\sigma(e_2)$, or
$\sigma(e_1)\rightsquigarrow\sigma(e_2)$ is one of the two spine relations of a
tetrahedron $K\in\mathcal{U}_{q-1}$ containing the transition.
\end{lemma}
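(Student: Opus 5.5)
The plan is to reduce the statement to the same finite enumeration used for \Cref{lem:classtable,lem:spinetable}, and then read off the $\sigma$-pairs. First, \Cref{lem:horizontalreduction} places the transition $e_1\to e_2$ inside a single macro-tetrahedron $Q\in\mathcal{U}_q$: we have $e_1,e_2\in\mathcal{E}(Q)\cap\mathcal{E}(T)$ and $\gen(T)\in\{3q,3q+1\}$. Let $K\in\mathcal{U}_{q-1}$ be the macro-parent of $Q$. By item~\ref{A1}, $K$ has type $A$ or $P_u$, so \Cref{lem:canonicalmacrocycle} lets us relabel $K$ as $K_A$ or $K_{P_u}$. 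Relabeling transports carriers, and hence $\sigma$, and it also transports the two spine relations. Since $e_1,e_2\in\mathcal{E}(Q)$ and $Q\subset K$, both $M_{\mathrm f}$-edges have minimal carrier faces among the faces of $K$. Because classes and $\sigma$ are independent of the incident macro-tetrahedron, $\sigma(e_1)$ and $\sigma(e_2)$ are marked edges of faces of this same $K$. This is what "a tetrahedron $K\in\mathcal{U}_{q-1}$ containing the transition" should mean.

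Second, I would make a structural observation that turns the conclusion into something checkable with little computation. The faces of $K$ are its two refinement faces, whose marks equal $\refedge(K)$, and its two nonrefinement faces, whose marks are $\mu_1(K)$ and $\mu_2(K)$. So every value of $\sigma$ on $M_{\mathrm f}$-edges inside $K$ lies in $\{\refedge(K),\mu_1(K),\mu_2(K)\}$. If $\sigma(e_1)=\sigma(e_2)$ there is nothing to prove. Otherwise the conclusion says the ordered pair is $(\mu_i(K),\refedge(K))$. We must therefore exclude three possibilities:
\begin{itemize}
\item[(a)] $\sigma(e_1)=\refedge(K)$ while $\sigma(e_2)$ is a nonrefinement mark;
\item[(b)] $\sigma(e_1)=\mu_1(K)$ and $\sigma(e_2)=\mu_2(K)$, or the reverse, when these marks are distinct;
\item[(c)] one nonrefinement face is carried but the other is not.
\end{itemize}

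Third, to exclude these, I would use the recorded $\sigma$-pairs from the common enumeration of the $48$ target configurations. Only those with $M_{\mathrm f}\to M_{\mathrm f}$ matter. For each, I would identify the carrier faces of $e_1$ and $e_2$ in canonical labels and check the direction. Heuristically, the direction reverses the causal direction in time. The event bisecting $e_2$ is caused by the midpoint of $e_1$. Medial edges on refinement faces, with $\sigma=\refedge(K)$, appear at microgeneration one, while those on nonrefinement faces appear later. For a child $T$ of $Q$ at generation $3q+1$, the face marks are edges of $Q$ by \Cref{lem:firstmicrofacemarks}. This constrains which face-medial edges of $Q$ can be $\refedge(T)$ and still carry a hanging midpoint of $e_1$.

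The main obstacle is this last step. It is a genuine case check, not a conceptual argument. I expect the $P_u$ case to be delicate, because there the two nonrefinement marks $[0,1]$ and $[1,2]$ share a vertex and a transition between their carriers might look possible. I would first try to rule out (b) by showing that the two nonrefinement faces of $K$ meet only along the edge opposite $\refedge(K)$, which for $K_{P_u}$ is $[1,3]$. An $M_{\mathrm f}$-edge on one of these faces never lies in a common macrochild $Q$ with a medial edge from the other face whose midpoint could hang on a target $T$. If that fails, I would fall back on the explicit list of $\sigma$-pairs produced by the enumeration.
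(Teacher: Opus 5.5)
Your reduction is the paper's: localize with \Cref{lem:horizontalreduction}, pass to $K_A$ or $K_{P_u}$ via item~\ref{A1} and \Cref{lem:canonicalmacrocycle}, and read the $\sigma$-pairs off the common enumeration. Your added observation is correct and useful: on $M_{\mathrm f}$-edges inside $K$, $\sigma$ takes only the values $\refedge(K),\mu_1(K),\mu_2(K)$, so the lemma amounts to excluding your cases (a) and (b). Case (c) is not a separate case, since (a) and (b) already exhaust the unequal ordered pairs from that three-element set.

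As written, however, the proposal stops exactly where the paper's proof begins. The whole content of the lemma is the outcome of the finite check, and you never state it; the paper records it as its two lists of $\sigma$-pairs. The check in fact shows something stronger. Take any target $T$ with $\gen(T)\in\{3q,3q+1\}$ whose refinement edge $e_2$ has class $M_{\mathrm f}$. In canonical labels, $e_2\in\{[m_{01},m_{03}],[m_{03},m_{23}]\}$ for $K_A$ and $e_2\in\{[m_{01},m_{02}],[m_{02},m_{12}]\}$ for $K_{P_u}$. These are medial edges of refinement faces, passing through the midpoint of $\refedge(K)$, so $\sigma(e_2)=\refedge(K)$ always. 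Combined with your reduction, this gives the lemma at once: (a) and (b) are excluded together, and the surviving pairs are exactly those the paper lists.

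Two smaller corrections. First, your heuristic that refinement-face medial edges appear at microgeneration one is false. The first bisection of $K$ creates only the two half-edges of $\refedge(K)$ and the two $V$-spokes; medial edges appear only at depths two and three. You do not use it, so drop it.

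Second, your idea for excluding (b) does work, but not for the reason you give. The medial edges of a face marked $[a,b]$ all contain $m_{ab}$. A macrochild $Q$ carrying medial edges of both nonrefinement faces would therefore have the segment joining $\operatorname{mid}(\mu_1(K))$ and $\operatorname{mid}(\mu_2(K))$ as an edge. That segment is $[m_{02},m_{13}]$ for $K_A$ and $[m_{01},m_{12}]$ for $K_{P_u}$, and it is not among the $25$ edges of the macrostep. Case (a), by contrast, genuinely requires inspecting the refinement edges of the $24$ targets per parent type.
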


\begin{proof}
Restrict the common edge-level enumeration to the pairs
$M_{\mathrm f}\to M_{\mathrm f}$ and apply
$\sigma$ to both entries.  The resulting pairs are
\begin{equation*}
\begin{array}{c|c}
 K_A&
 (\sigma(e_1),\sigma(e_2))\in
 \{([0,2],[0,3]),([1,3],[0,3]),([0,3],[0,3])\},\\[0.2em]
 K_{P_u}&
 (\sigma(e_1),\sigma(e_2))\in
 \{([0,1],[0,2]),([1,2],[0,2]),([0,2],[0,2])\}.
\end{array}
\end{equation*}
The unequal pairs in the first row are precisely the two type-$A$ spine
relations, and the unequal pairs in the second row are precisely the two
type-$P_u$ spine relations.  The remaining pair in each row is equality.
The completeness of the common enumeration proves the assertion for every
different-edge horizontal transition.
\end{proof}

\subsection{Proof of the horizontal-propagation bound}
\label{sec:horizontal-proof}

\begin{proof}[Proof of \Cref{prop:horizontal}]
Fix a horizontal macro-generation-$q$ block
\[
 K_0,K_1,\ldots,K_J
\]
of events and write $e_j=\refedge(K_j)$.  The event tetrahedra are distinct
nodes of the binary ancestry trees generated by bisection.

\medskip
\noindent\emph{Low macro-generations.}
For a fixed initial tetrahedron, the total number of possible event
tetrahedra at generations $3q$, $3q+1$, and $3q+2$ is
\[
  2^{3q}+2^{3q+1}+2^{3q+2}=7\,8^q.
\]
Therefore, for $q=0,1,2$,
\[
  J+1\le 7\,8^q\card\mathcal{T}_0\le448\,\card\mathcal{T}_0.
\]
It remains to consider $q\ge3$.  In this range, the macro-meshes
$\mathcal{U}_{q-2}$, $\mathcal{U}_{q-1}$, and $\mathcal{U}_q$ consist entirely of persistent types
$A$ and $P_u$, so all preceding transition tables apply.

\medskip
\noindent\emph{Decomposition by edge class.}
If $e_j=e_{j+1}$, their classes agree.  If they are distinct,
\Cref{lem:classtable} applies.  The directed class graph therefore shows that
the causal chain contains at most one block of each of
\[
 I_P,\quad M_{\mathrm f},\quad I_A,\quad H,\quad V.
\]
Some blocks may be absent, and their order is constrained by
\eqref{eq:classpairs}.

\medskip
\noindent\emph{Interior blocks.}
There is no different-edge transition $I_P\to I_P$ or $I_A\to I_A$.
Consequently, within either interior block all bisection edges are the same
geometric edge $e\in\mathcal{E}(\mathcal{U}_q)$.  The fixed-edge multiplicity estimate
in \Cref{lem:fixededge} therefore shows that each interior block has at most
\begin{equation*}
  7\nu
\end{equation*}
events.

\medskip
\noindent\emph{Half-edge and spoke blocks.}
For an $H$-block, equality of consecutive edges preserves the anchor, and every
different-edge $H\to H$ transition preserves the anchor by
\Cref{lem:classtable}.  Hence all bisection edges in the block have one common
anchor $a\in\mathcal{V}(\mathcal{U}_{q-1})$.  Every macro-tetrahedron of $\mathcal{U}_q$ containing one
of those edges descends from a tetrahedron in $\omega_{q-1}(a)$.  There are at
most $8\nu$ such macro-tetrahedra of $\mathcal{U}_q$.  By
\Cref{lem:eventcount}, the block has at most
\begin{equation*}
  7\cdot8\nu=56\nu
\end{equation*}
events.  The same proof, using $V\to V$ and anchor preservation, gives
\begin{equation*}
  \card(\text{$V$-block})\le56\nu.
\end{equation*}

\medskip
\noindent\emph{The medial block.}
Let
\[
 e_{j_0},e_{j_0+1},\ldots,e_{j_1}
\]
be the $M_{\mathrm f}$-block and put
$\mu_j=\sigma(e_j)\in\mathcal{E}(\mathcal{U}_{q-1})$.  By
\Cref{lem:Mprojection}, each consecutive pair satisfies either
$\mu_{j+1}=\mu_j$ or
\[
  \mu_j\rightsquigarrow\mu_{j+1}
\]
as a spine relation in $\mathcal{U}_{q-1}$.

If every $\mu_j$ is equal to one fixed edge $\mu$, then the carrier face of
every $e_j$ is marked by $\mu$ and therefore contains $\mu$.  Since $\mathcal{U}_{q-1}$ is
conforming and
$\operatorname{relint}(e_j)\subset
 \operatorname{relint}(\operatorname{car}(e_j))$, the $\mathcal{U}_{q-1}$-parent of
every $\mathcal{U}_q$-tetrahedron containing $e_j$ contains
$\operatorname{car}(e_j)$.  All relevant
$\mathcal{U}_q$-macro-tetrahedra descend from the edge star $\omega_{q-1}(\mu)$.
There are at most $8\nu$ of them, so the whole $M_{\mathrm f}$-block has at most $56\nu$
events.

Otherwise, let $i$ be the first index for which $\mu_i\ne\mu_{i+1}$.  The prefix
through $e_i$ is controlled by the fixed edge $\mu_i$ and therefore has at most
$56\nu$ events, as above.  The nontrivial transition
$\mu_i\rightsquigarrow\mu_{i+1}$ is a spine relation in $\mathcal{U}_{q-1}$.  Since
$q-1\ge2$, \Cref{lem:spinetable}, with classes relative to $\mathcal{U}_{q-2}$,
shows that $\mu_{i+1}$ is of class $H$ or $V$.  Let
$a\in\mathcal{V}(\mathcal{U}_{q-2})$ be its anchor.  Every later equality preserves this edge and
every later nontrivial spine transition starts from an $H$- or $V$-edge;
\Cref{lem:spinetable} shows that its target is again $H$ or $V$ with the same
anchor $a$.  Hence
\begin{equation*}
  a\in\mu_j\qquad\text{for all }j\ge i+1.
\end{equation*}
The carrier face of $e_j$ is marked by $\mu_j$, and therefore contains $a$.
The same carrier argument shows that every event in the tail lies over the
vertex star
$\omega_{q-2}(a)$.  Two macrosteps create at most
$8^2\nu$ tetrahedra of $\mathcal{U}_q$ over that star, and
\Cref{lem:eventcount} gives at most $7\cdot8^2\nu=448\nu$
tail events.  Consequently,
\begin{equation*}
  \card(\text{$M_{\mathrm f}$-block})\le56\nu+448\nu=504\nu.
\end{equation*}

\medskip
Adding the two possible interior blocks, the one medial block, and the
$H$- and $V$-blocks gives
\[
 J+1
 \le 2(7\nu)+504\nu+56\nu+56\nu
 =630\nu.
\]
Combining this with the low-level estimate proves \Cref{prop:horizontal}.
\end{proof}

\section{Closure complexity of AMP refinement}
\label{sec:amp-closure}

We now derive geometric locality and the cumulative closure estimate from
\Cref{prop:horizontal}.  Throughout this section, $H_{\mathrm{AMP}}$ denotes
the fixed constant in \Cref{prop:horizontal}.  Since it depends only on
$\mathcal{T}_0$, it is not displayed as a separate argument of subsequent
constants.

For nonempty sets $X,Y\subseteq\mathbb R^3$, a point $x\in\mathbb R^3$, and
$r>0$, write
\begin{align*}
 \dist(X,Y)&:=\inf\{|x'-y|:x'\in X,\ y\in Y\},\\
 B(x,r)&:=\{y\in\mathbb R^3:|y-x|<r\}.
\end{align*}
Thus $B(x,r)$ is the open Euclidean ball of radius $r$ centered at $x$.

\subsection{Geometric locality}

We first record its geometric consequence.

\begin{theorem}[Single-call locality]\label{thm:locality}
Let $\mathcal{T}_0$ be an initial AMP marked mesh, let $\mathcal{T}$ be a
conformingly marked AMP descendant mesh of $\mathcal{T}_0$, let
$\M\subseteq\mathcal{T}$ be a marking set, and let
$\mathcal{T}^+=\texttt{\emph{RefineAMP}}(\mathcal{T},\M)$.  For every
$T\in\mathcal{T}^+\setminus\mathcal{T}$, there is a causal root
$M\in\M$ such that
\begin{align}
 \lvl(T)&\le \lvl(M)+1,\label{eq:levelinc}\\
 \dist(T,M)&\le R_{\mathrm{AMP}}\,2^{-\lvl(T)},\label{eq:distlocal}
\end{align}
where $R_{\mathrm{AMP}}:=4H_{\mathrm{AMP}}C_h$ and $C_h$ is from
\Cref{lem:scale}.
\end{theorem}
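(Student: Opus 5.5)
Every $T\in\mathcal{T}^+\setminus\mathcal{T}$ is a child of some event tetrahedron $K_J$ of the call, so the plan is to work along a causal chain. Let $K_J$ be the event whose bisection produced $T$, and follow the chosen parents in the causal forest of \Cref{def:causalforest} back to a root $K_0$. The root is a batch-zero event, hence $K_0=M\in\M$. Put $q_j:=\mgen(\refedge(K_j))=\lvl(K_j)$, using \Cref{lem:redge}. By \Cref{lem:monotone} the sequence $q_0\ge q_1\ge\cdots\ge q_J$ is nonincreasing with unit drops. For the level estimate \eqref{eq:levelinc}: $\gen(T)=\gen(K_J)+1$, so $\lvl(T)\le\lvl(K_J)+1=q_J+1\le q_0+1=\lvl(M)+1$.

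For \eqref{eq:distlocal}, I split the chain into its horizontal blocks, one for each value $p\in\{q_J,\dots,q_0\}$. By \Cref{prop:horizontal} each block has at most $H_{\mathrm{AMP}}$ events. Consecutive events $K_{j}\to K_{j+1}$ touch: $\refedge(K_j)\in\mathcal{E}(K_{j+1})$ and $\refedge(K_j)\subset K_j$, so $K_j\cap K_{j+1}\neq\varnothing$. The triangle inequality along the chain then gives
\[
 \dist(T,M)\le \sum_{j=1}^{J}\diam(K_j)+\diam(T)\cdot 0,
\]
since $T\subset K_J$ meets $K_J$. More precisely, $\dist(K_J,K_0)\le\sum_{j=1}^{J-1}\diam(K_j)$, and $\dist(T,M)\le\dist(K_J,M)$ plus at most $\diam(K_J)$. \Cref{lem:scale} bounds each $\diam(K_j)\le C_h2^{-q_j}$. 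Grouping by blocks gives
\[
 \dist(T,M)\le H_{\mathrm{AMP}}C_h\sum_{p\ge q_J}2^{-p}
 = 2H_{\mathrm{AMP}}C_h2^{-q_J}.
\]
Since $\lvl(T)\le q_J+1$, we have $2^{-q_J}\le 2\cdot2^{-\lvl(T)}$, and the bound becomes $4H_{\mathrm{AMP}}C_h2^{-\lvl(T)}$. This is exactly $R_{\mathrm{AMP}}2^{-\lvl(T)}$.

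The main obstacle is the bookkeeping in the final step. The geometric sum must include the last block, containing $K_J$ itself, without overshooting the factor $4$. To keep it tight, I would bound $\dist(T,M)$ by the sum of $\diam(K_j)$ over $j=1,\dots,J$ only, since $T\subset K_J$ and $K_J$ meets $K_{J-1}$. The block sums then total at most $H_{\mathrm{AMP}}C_h\cdot2\cdot2^{-q_J}$. One more check is needed: every $K_j$ in the level-$p$ block has $\lvl(K_j)=p$, which holds by \Cref{lem:redge}, so the diameter bound applies uniformly within each block. Finally, if $T$ arises at batch zero, then $J=0$ and $\dist(T,M)=0$, so the claim is trivial.
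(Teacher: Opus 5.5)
Your proposal is correct and follows essentially the same argument as the paper. You trace the causal chain from the event producing $T$ back to a batch-zero root $M$, use \Cref{lem:redge,lem:monotone} for the level bound, and bound $\dist(T,M)$ by a chain of diameters grouped into horizontal blocks of size at most $H_{\mathrm{AMP}}$ (\Cref{prop:horizontal}), which gives $2H_{\mathrm{AMP}}C_h2^{-q_J}\le 4H_{\mathrm{AMP}}C_h2^{-\lvl(T)}$. Dropping $\diam(K_0)$ is valid but unnecessary: the paper keeps the full sum $\sum_{j=0}^{J}\diam(K_j)$ and still gets the factor $4$, so the ``obstacle'' you flag does not actually arise.
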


\begin{proof}
Let $K_0=M,K_1,\ldots,K_J$
be the causal chain ending at the bisection event whose child is $T$.  Put
\[
  q_j:=\mgen(\refedge(K_j)).
\]
By \Cref{lem:redge}, $q_j=\lvl(K_j)$.  By
\Cref{lem:monotone},
\[
  q_0\ge q_1\ge\cdots\ge q_J,
  \qquad q_{j+1}\in\{q_j,q_j-1\}.
\]
Since $T$ is a child of $K_J$,
\[
  \lvl(T)\le \lvl(K_J)+1=q_J+1.
\]
Therefore
\[
  \lvl(T)\le q_0+1=\lvl(M)+1,
\]
which proves \eqref{eq:levelinc}.

For the distance estimate, the parent and child event tetrahedra intersect
geometrically: the child event contains the edge whose midpoint was created by
the parent event.  Hence consecutive $K_j$ have nonempty intersection as closed
sets.  Also $T\subset K_J$.  Choosing one point in each consecutive
intersection and applying the triangle inequality gives
\[
  \dist(T,M)\le \sum_{j=0}^{J}\diam(K_j).
\]
Using \Cref{lem:scale,prop:horizontal} and the fact that each integer between
$q_J$ and $q_0$ occurs at most $H_{\mathrm{AMP}}$ times,
\begin{align*}
 \dist(T,M)
 &\le C_h\sum_{j=0}^{J}2^{-q_j}
 \le H_{\mathrm{AMP}}C_h\sum_{q=q_J}^{q_0}2^{-q}\\
 &\le 2H_{\mathrm{AMP}}C_h\,2^{-q_J}.
\end{align*}
Since $\lvl(T)\le q_J+1$, we have
$2^{-q_J}\le2\,2^{-\lvl(T)}$, which proves \eqref{eq:distlocal} with the
stated value of $R_{\mathrm{AMP}}$.
\end{proof}

\begin{remark}
No bound on the \emph{total} length of a causal chain is needed.  The chain may
descend through arbitrarily many macro-generations.  The diameters then form a
geometric series.  Only the number of same-scale events must be controlled.
This is the central structural reduction.
\end{remark}

\subsection{A self-contained weighted closure theorem}

Together with the scale estimate, \Cref{thm:locality} yields the full
closure-complexity bound.  We give the weighted packing argument in full.

\begin{lemma}[Level-wise packing]\label{lem:packing}
For every $R\ge0$, every conformingly marked AMP descendant mesh $\mathcal{T}$, every
$x\in\mathbb R^3$, and every $k\ge0$,
\begin{equation}\label{eq:Cpackexplicit}
\begin{aligned}
 \mathcal N_k(x,R)
 &:=\{K\in\mathcal{T}:\lvl(K)=k,\ K\cap B(x,R2^{-k})\ne\emptyset\},\\
 \card\mathcal N_k(x,R)
 &\le C_{\mathrm{pack}}(R):=\frac{\kappa_3(R+C_h)^3}{c_V},
 \qquad \kappa_3:=\frac{4\pi}{3}.
\end{aligned}
\end{equation}
\end{lemma}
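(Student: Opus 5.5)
This is a standard volume-packing argument. The tetrahedra of the conforming mesh $\mathcal{T}$ have pairwise disjoint interiors. Each one counted in $\mathcal N_k(x,R)$ has volume bounded below by \eqref{eq:volscale}. Each also lies inside a slightly enlarged ball, by the diameter bound \eqref{eq:diamscale}. Comparing total volume with the volume of that ball gives the count.

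\emph{Step 1: containment.} Fix $K\in\mathcal N_k(x,R)$ and pick $y\in K\cap B(x,R2^{-k})$. Since $\lvl(K)=k$, \Cref{lem:scale} gives $\diam(K)\le C_h2^{-k}$. Hence every $z\in K$ satisfies
\[
|z-x|\le|z-y|+|y-x|<C_h2^{-k}+R2^{-k},
\]
so $K\subseteq \overline{B}(x,(R+C_h)2^{-k})$.

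\emph{Step 2: volume comparison.} The elements of $\mathcal{T}$ have disjoint interiors, so their Lebesgue measures add. Each $K\in\mathcal N_k(x,R)$ satisfies $|K|\ge c_V2^{-3k}$ by \eqref{eq:volscale}. Therefore
\[
\card\mathcal N_k(x,R)\,c_V2^{-3k}\le\sum_{K\in\mathcal N_k(x,R)}|K|
\le\kappa_3(R+C_h)^3 2^{-3k}.
\]
Dividing by $c_V2^{-3k}$ gives $C_{\mathrm{pack}}(R)$. Finiteness of $\mathcal N_k(x,R)$ is automatic because $\mathcal{T}$ is finite. The case $R=0$ is trivial, since the open ball is empty and $\mathcal N_k$ is empty.

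\emph{Main point of care.} The only thing to check is that \Cref{lem:scale} applies to every element of $\mathcal{T}$. This holds because each element is an AMP descendant of $\mathcal{T}_0$, and its constants depend only on $\mathcal{T}_0$. The argument uses neither conformity nor the marking structure beyond this fact, so no genuine obstacle is expected.
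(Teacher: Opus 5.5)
Your proposal is correct and follows the paper's proof: containment of each $K\in\mathcal N_k(x,R)$ in the ball of radius $(R+C_h)2^{-k}$ via the diameter bound, then comparison of the total volume of these interior-disjoint tetrahedra, each of volume at least $c_V2^{-3k}$, with the volume of that ball. Your remark that only disjoint interiors are needed, not conformity, is accurate. Disjoint interiors already hold for any tetrahedral mesh as defined in the paper.
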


\begin{proof}
If $K\in\mathcal N_k(x,R)$, then
\Cref{lem:scale} implies
\[
  K\subset B(x,(R+C_h)2^{-k}).
\]
The interiors of the tetrahedra of a conforming mesh are disjoint, while
$|K|\ge c_V2^{-3k}$.  Comparing the sum of their volumes with the volume of the
expanded ball gives
\[
 \card\mathcal N_k(x,R)\,c_V2^{-3k}
 \le \frac{4\pi}{3}(R+C_h)^3 2^{-3k}.
\]
This proves the claim.
\end{proof}

\subsubsection{Causal witnesses}

Fix an initial AMP marked mesh $\mathcal{T}_0$.  A finite \emph{AMP adaptive sequence}
issued from $\mathcal{T}_0$ is a sequence with $\M_\ell\subseteq\mathcal{T}_\ell$ and
\begin{equation*}
\mathcal{T}_{\ell+1}=\texttt{RefineAMP}(\mathcal{T}_\ell,\M_\ell),
 \qquad \ell=0,\ldots,L-1.
\end{equation*}
Let $\mathfrak M:=\bigsqcup_{\ell=0}^{L-1}\M_\ell$
be the disjoint union of all marking sets.  Thus
\begin{equation}\label{eq:numbermarks}
  \card\mathfrak M=\sum_{\ell=0}^{L-1}\card\M_\ell.
\end{equation}

For each $\ell=0,\ldots,L-1$ and every
$T\in\mathcal{T}_{\ell+1}\setminus\mathcal{T}_\ell$,
choose once and for all a \emph{witness element} $M$ (the causal root) supplied by
\Cref{thm:locality}, satisfying
\eqref{eq:levelinc}--\eqref{eq:distlocal}.

Every $T\in\mathcal{T}_L\setminus\mathcal{T}_0$ has a finite witness history.  Starting from
its witness element, continue backward whenever the current witness element is not in
$\mathcal{T}_0$.  Creation times strictly decrease, so the process terminates.  Thus,
for some $N\ge1$, we obtain witness elements
\begin{equation}\label{eq:witnesshistory}
 M_0,M_1,\ldots,M_{N-1}\in\mathfrak M
\end{equation}
with $M_0\in\mathcal{T}_0$, where $M_{j+1}$ was created by a refinement rooted at $M_j$
and $T$ was created by a refinement rooted at $M_{N-1}$.  Hence
\begin{subequations}
\begin{align}
 \lvl(M_{j+1})&\le\lvl(M_j)+1,\label{eq:histlevel}\qquad j=0,\ldots,N-2,\\
 \dist(M_{j+1},M_j)&\le R_{\mathrm{AMP}}2^{-\lvl(M_{j+1})},\label{eq:histdist}\qquad j=0,\ldots,N-2,\\
 \lvl(T)&\le\lvl(M_{N-1})+1,\label{eq:lastlevel}\\
 \dist(T,M_{N-1})&\le R_{\mathrm{AMP}}2^{-\lvl(T)}.\label{eq:lastdist}
\end{align}
\end{subequations}

\subsubsection{Weights}

Set
\begin{equation*}
  F:=R_{\mathrm{AMP}}+C_h.
\end{equation*}
Fix $x\in(0,F]$ and put
\begin{equation*}
  E:=R_{\mathrm{AMP}}+x.
\end{equation*}
For $T\in\mathcal{T}_L\setminus\mathcal{T}_0$ and $M\in\mathfrak M$, define
\begin{equation}\label{eq:lambda}
 \lambda(T,M):=
 \begin{cases}
 F\,2^{\lvl(T)-\lvl(M)},
 &\begin{array}{l}
   \dist(T,M)\le E2^{-\lvl(T)},\\[-0.1em]
   \lvl(T)\le\lvl(M)+1,
  \end{array}\\[0.8em]
 0,&\text{otherwise.}
 \end{cases}
\end{equation}

\begin{lemma}[Uniform upper weight per marked element]\label{lem:upperweight}
For every $M\in\mathfrak M$,
\[
  \sum_{T\in\mathcal{T}_L\setminus\mathcal{T}_0}\lambda(T,M)
  \le 4F\,\frac{\kappa_3(E+3C_h)^3}{c_V}.
\]
\end{lemma}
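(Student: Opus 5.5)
We fix $M\in\mathfrak M$, write $m:=\lvl(M)$, and split the sum according to the level $k:=\lvl(T)$. The weight $\lambda(T,M)$ vanishes unless $k\le m+1$, and then it equals $F\,2^{k-m}$. The sum therefore becomes
\[
 \sum_{k=0}^{m+1}F\,2^{k-m}\,\card\mathcal A_k,
 \qquad
 \mathcal A_k:=\{T\in\mathcal{T}_L\setminus\mathcal{T}_0:\lvl(T)=k,\ \dist(T,M)\le E2^{-k}\}.
\]
The geometric factor $\sum_{k\le m+1}2^{k-m}\le 4$ produces the constant $4F$. It therefore suffices to show that $\card\mathcal A_k\le\kappa_3(E+3C_h)^3/c_V$ uniformly in $k\le m+1$.

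The main obstacle is that the elements of $\mathcal A_k$ need not lie in one conforming mesh. They may be created at different steps $\ell$, and some of them may be refined later. I would therefore not invoke \Cref{lem:packing} for a single mesh directly. Instead I would use its volume-comparison argument for the family $\mathcal A_k$ itself. Two distinct AMP descendants of the same generation have disjoint interiors, because they are distinct nodes at the same depth of the binary refinement trees rooted in $\mathcal{T}_0$. Elements with equal $\lvl$ but different generations $3k,3k+1,3k+2$ may be nested, so I split $\mathcal A_k$ by $s=\gen(T)-3k\in\{0,1,2\}$. Within each class the interiors are disjoint, and \Cref{lem:scale} gives $|T|\ge c_V2^{-3k}$. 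If the factor $3$ from this splitting is unwanted, one can instead use that each class-$s$ element contains a distinct generation-$(3k+2)$ descendant of volume at least $c_V 2^{-3k}/4$. That does not help, so I would accept a factor depending on $s$ and hope it is absorbed into the enlarged radius $3C_h$. More plausibly, the intended fix is to observe that $\lvl(M)\ge k-1$ gives $\diam(M)\le C_h2^{-m}\le 2C_h2^{-k}$.

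The containment step goes as follows. Choose $y\in M$ realizing the distance up to $\epsilon$, and let $x_M$ be any point of $M$. For $T\in\mathcal A_k$ we have
\[
 T\subset B\bigl(x_M,\ E2^{-k}+\diam(M)+\diam(T)\bigr)\subset B\bigl(x_M,(E+2C_h+C_h)2^{-k}\bigr),
\]
using $\diam(T)\le C_h2^{-k}$ and $\diam(M)\le C_h2^{-m}\le 2C_h2^{-k}$ since $m\ge k-1$. This yields exactly the radius $(E+3C_h)2^{-k}$ in the claimed constant. Comparing volumes then gives the bound $\card\mathcal A_k\le\kappa_3(E+3C_h)^3/c_V$ per disjoint-interior family.

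The delicate point, which I expect to be the real obstacle, is to obtain disjointness for all of $\mathcal A_k$ at once rather than per generation class. I would first try to argue that among the nested elements of the same level only a controlled configuration can occur. Failing that, I would accept the factor from the three generation classes and check whether it is absorbed into the constant. If it is not, I would reduce to the leaves: replace each $T$ by one of its level-$k$ descendants in $\mathcal{T}_L$, or alternatively use volume $|T|\ge c_V2^{-3k}$ with $c_V$ already calibrated to $\lvl$.
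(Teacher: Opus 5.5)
Your level splitting, the geometric factor $\sum_{k\le m+1}2^{k-m}\le 4$, and the containment of every contributing $T$ in a ball of radius $(E+3C_h)2^{-k}$ about a fixed point of $M$ (via $\diam(M)\le C_h2^{-m}\le 2C_h2^{-k}$) are exactly the paper's argument. The gap is in the counting step, and it comes from a misreading. You assert that the elements of $\mathcal A_k$ need not lie in one conforming mesh. By your own definition, however, $\mathcal A_k\subseteq\mathcal{T}_L\setminus\mathcal{T}_0\subseteq\mathcal{T}_L$: the lemma sums only over elements of the final mesh. The step at which an element was created is irrelevant, and no element of $\mathcal{T}_L$ is refined later. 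Since $\mathcal{T}_L$ is a mesh, conforming by \Cref{thm:AMPtermination}, distinct elements of $\mathcal A_k$ have pairwise disjoint interiors, whatever their generations $3k$, $3k+1$, $3k+2$. Two nested tetrahedra cannot both be active in one mesh. So \Cref{lem:packing} applies directly to $\mathcal{T}_L$ with center $y\in M$ and $R=E+2C_h$, which is what the paper does. This gives $\card\mathcal A_k\le\kappa_3(E+3C_h)^3/c_V$ and hence the claim. Equivalently, your own volume comparison can be applied to all of $\mathcal A_k$ at once.

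As written, though, your proposal does not reach the stated bound. You decline to use \Cref{lem:packing} and split $\mathcal A_k$ into three generation classes, which yields $12F\kappa_3(E+3C_h)^3/c_V$. The extra factor $3$ cannot be absorbed, because the constant in the lemma is explicit and your radius is already exactly $(E+3C_h)2^{-k}$. Your other fallbacks do not repair this. Passing to level-$k$ descendants in $\mathcal{T}_L$ is vacuous, since every $T$ in the sum is already a leaf of $\mathcal{T}_L$. Recalibrating $c_V$ does not address the overlap you were worried about. The missing ingredient is the one-line observation that every summand is an element of the single mesh $\mathcal{T}_L$.
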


\begin{proof}
Fix $M$ and write $m=\lvl(M)$.  If $\lambda(T,M)\ne0$ and $\lvl(T)=k$, then
$k\le m+1$ and $\dist(T,M)\le E2^{-k}$.  Choose a fixed point $y\in M$.
Because $\diam(M)\le C_h2^{-m}\le2C_h2^{-k}$ and
$\diam(T)\le C_h2^{-k}$, every such $T$ meets
$B(y,(E+2C_h)2^{-k})$.  Hence \Cref{lem:packing} and
\eqref{eq:Cpackexplicit} bound their number by
\[
  P(E):=\frac{\kappa_3(E+3C_h)^3}{c_V},
\]
and therefore
\begin{align*}
 \sum_{T\in\mathcal{T}_L\setminus\mathcal{T}_0}\lambda(T,M)
 &\le P(E)F\sum_{k=0}^{m+1}2^{k-m}
 \le 4P(E)F.
\end{align*}
This is the stated bound.
\end{proof}

\begin{lemma}[Uniform lower weight]\label{lem:lowerweight}
Every $T\in\mathcal{T}_L\setminus\mathcal{T}_0$ satisfies
\begin{equation}\label{eq:lowerweight}
  \sum_{M\in\mathfrak M}\lambda(T,M)\ge x.
\end{equation}
\end{lemma}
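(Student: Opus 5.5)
The plan is to run a Binev--Dahmen--DeVore type walk backward along the witness history \eqref{eq:witnesshistory} of $T$. Fix $T\in\mathcal T_L\setminus\mathcal T_0$, put $k:=\lvl(T)$, and let $M_0,\ldots,M_{N-1}$ be its witness elements. Because the elements of $\mathfrak M$ are counted separately in the disjoint union, it will suffice to bound from below the partial sum of $\lambda(T,M_j)$ over some subset of the indices $j$.

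\emph{Step 1 (levels).} Since $M_0\in\mathcal T_0$, we have $\lvl(M_0)=0\le k$. Let $s$ be the largest index with $\lvl(M_s)\le k$. I claim that $\lvl(M_j)\ge k+1$ for $j>s$ and that $k\le\lvl(M_s)+1$. The first assertion is the definition of $s$. For the second, if $s=N-1$ it is \eqref{eq:lastlevel}; if $s<N-1$, then \eqref{eq:histlevel} gives $k+1\le\lvl(M_{s+1})\le\lvl(M_s)+1$. Hence every $M_j$ with $j\ge s$ satisfies the level condition $\lvl(T)\le\lvl(M_j)+1$ in \eqref{eq:lambda}.

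\emph{Step 2 (distance accumulation).} For $t\in\{s,\ldots,N-1\}$, I will chain the triangle inequality through the closed sets $M_{N-1},\ldots,M_t$. Consecutive sets are within the distances \eqref{eq:histdist}, and crossing an intermediate set $M_j$ costs at most $\diam(M_j)\le C_h2^{-\lvl(M_j)}$ by \Cref{lem:scale}. Together with \eqref{eq:lastdist} this gives
\[
 \dist(T,M_t)\le R_{\mathrm{AMP}}2^{-k}
 +\sum_{j=t+1}^{N-1}\bigl(R_{\mathrm{AMP}}+C_h\bigr)2^{-\lvl(M_j)}
 =R_{\mathrm{AMP}}2^{-k}+F\sum_{j=t+1}^{N-1}2^{-\lvl(M_j)}.
\]

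\emph{Step 3 (dichotomy).} Note that $M_{N-1}$ satisfies the distance condition, since $R_{\mathrm{AMP}}\le E$. There are two cases.
\begin{itemize}
\item[(a)] If every $M_j$ with $s\le j\le N-1$ satisfies $\dist(T,M_j)\le E2^{-k}$, then $\lambda(T,M_s)=F2^{k-\lvl(M_s)}\ge F\ge x$.
\item[(b)] Otherwise, let $t\ge s$ be the largest index with $\dist(T,M_t)>E2^{-k}=(R_{\mathrm{AMP}}+x)2^{-k}$; then $t<N-1$. Step~2 yields $F\sum_{j=t+1}^{N-1}2^{-\lvl(M_j)}>x2^{-k}$. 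The indices $j=t+1,\ldots,N-1$ all satisfy both conditions in \eqref{eq:lambda}, by Step~1 and by the maximality of $t$. Therefore
\[
 \sum_{j=t+1}^{N-1}\lambda(T,M_j)=F\sum_{j=t+1}^{N-1}2^{k-\lvl(M_j)}>x .
\]
\end{itemize}
In both cases \eqref{eq:lowerweight} holds.

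The main obstacle is the bookkeeping in Step~2. The intermediate diameters have to be charged at the levels $\lvl(M_j)$, and the sum must start exactly at the index following the first failure, so that the accumulated error matches the weights $F2^{k-\lvl(M_j)}$. This is also why $F$ was chosen as $R_{\mathrm{AMP}}+C_h$.
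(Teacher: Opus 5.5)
Your proof is correct and follows the paper's argument: the same index $s$, the same backward triangle-inequality chain in which $F=R_{\mathrm{AMP}}+C_h$ absorbs both the $\diam(M_j)$ terms and the \eqref{eq:histdist} terms, and the same choice of the last index outside the $E2^{-\lvl(T)}$-neighborhood. The only difference is cosmetic: you split on whether every $M_j$ with $j\ge s$ lies in that neighborhood, whereas the paper checks only $M_s$; both case splits are valid.
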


\begin{proof}
Let $M_0,\ldots,M_{N-1}$ be a witness history
\eqref{eq:witnesshistory}, put $\ell=\lvl(T)$, and write
$m_j=\lvl(M_j)$.  Since $M_0\in\mathcal{T}_0$, $m_0=0\le\ell$.  Let
\[
 s:=\max\{j\in\{0,\ldots,N-1\}:m_j\le\ell\}.
\]
The set is nonempty.  By maximality of $s$ and the one-step increase
\eqref{eq:histlevel}, one necessarily has
\begin{equation}\label{eq:ms}
  m_s\in\{\ell-1,\ell\}.
\end{equation}
Indeed, if $s<N-1$ and $m_s\le\ell-2$, then
$m_{s+1}\le m_s+1\le\ell-1$, contradicting maximality; if $s=N-1$, then
\eqref{eq:lastlevel} gives the same conclusion.

If $\dist(T,M_s)\le E2^{-\ell}$, then \eqref{eq:ms} implies
\[
  \lambda(T,M_s)=F2^{\ell-m_s}\ge F,
\]
and we are done because $x\le F$.

Assume therefore that $M_s$ lies outside the $E2^{-\ell}$ neighborhood of $T$.
By \eqref{eq:lastdist} and $E>R_{\mathrm{AMP}}$, the last witness element
$M_{N-1}$ lies inside that
neighborhood.  Hence there is a largest index
\[
 k\in\{s,\ldots,N-2\}
 \quad\text{such that}\quad
 \dist(T,M_k)>E2^{-\ell}.
\]
Then all $M_j$ with $j>k$ lie inside the neighborhood.  Also $j>k\ge s$ and the
maximality of $s$ imply $m_j>\ell$, so all these witness elements satisfy the level
condition in \eqref{eq:lambda}.

For closed sets $X,Y,Z$ one has
$\dist(X,Z)\le\dist(X,Y)+\diam(Y)+\dist(Y,Z)$.  Applying this repeatedly along
the witness history, then using \eqref{eq:lastdist},
\eqref{eq:histdist}, and \Cref{lem:scale}, gives
\begin{align*}
 E2^{-\ell}
 &<\dist(T,M_k)\\
 &\le R_{\mathrm{AMP}}2^{-\ell}
    +\sum_{j=k+1}^{N-1}
      \bigl(\diam(M_j)+\dist(M_j,M_{j-1})\bigr)\\
 &\le R_{\mathrm{AMP}}2^{-\ell}
    +(R_{\mathrm{AMP}}+C_h)\sum_{j=k+1}^{N-1}2^{-m_j}\\
 &=R_{\mathrm{AMP}}2^{-\ell}
   +2^{-\ell}\sum_{j=k+1}^{N-1}
       F2^{\ell-m_j}.
\end{align*}
For $j>k$ the corresponding terms are exactly
$\lambda(T,M_j)$.  Since $E-R_{\mathrm{AMP}}=x$, multiplication by $2^\ell$ yields
\[
  \sum_{j=k+1}^{N-1}\lambda(T,M_j)>x.
\]
This proves \eqref{eq:lowerweight}.
\end{proof}

\begin{theorem}[Unconditional AMP closure estimate]\label{thm:unconditional}
Every finite AMP adaptive sequence issued from an initial AMP marked mesh
$\mathcal{T}_0$ satisfies
\begin{equation*}
  \card\mathcal{T}_L-\card\mathcal{T}_0
  \le C_{\mathrm{clos}}(\mathcal{T}_0)
       \sum_{\ell=0}^{L-1}\card\M_\ell.
\end{equation*}
Here the constant $C_{\mathrm{clos}}(\mathcal{T}_0)$ is bounded by
\[
 C_{\mathrm{clos}}(\mathcal{T}_0)
 \le 36\,\frac{\pi C_h^3}{c_V}
 (4H_{\mathrm{AMP}}+1)(4H_{\mathrm{AMP}}+3)^2.
\]
\end{theorem}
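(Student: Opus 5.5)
The plan is the standard double-counting argument using the weights $\lambda$ from \eqref{eq:lambda}. First reduce the left-hand side to a count of new elements: each bisection removes one tetrahedron and adds two, so $\card\mathcal{T}_L-\card\mathcal{T}_0$ is at most the number of bisections performed. The number of bisections is in turn at most $\card(\mathcal{T}_L\setminus\mathcal{T}_0)$. Indeed, every bisected tetrahedron has at least one descendant in $\mathcal{T}_L$, and all these descendants are new, so charging each bisection injectively is possible. A cleaner route is to observe that the refinement forest below $\mathcal{T}_0$ has $\card\mathcal{T}_L$ leaves. With $N$ internal nodes it has exactly $N$ more leaves than roots, and the leaves not in $\mathcal{T}_0$ lie in $\mathcal{T}_L\setminus\mathcal{T}_0$. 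Hence $\card\mathcal{T}_L-\card\mathcal{T}_0\le\card(\mathcal{T}_L\setminus\mathcal{T}_0)$.

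Next, apply \Cref{lem:lowerweight} and \Cref{lem:upperweight} and exchange the order of summation:
\[
 x\,\card(\mathcal{T}_L\setminus\mathcal{T}_0)\le\sum_{T}\sum_{M\in\mathfrak M}\lambda(T,M)
 =\sum_{M\in\mathfrak M}\sum_T\lambda(T,M)\le \card\mathfrak M\cdot 4F\,\frac{\kappa_3(E+3C_h)^3}{c_V}.
\]
By \eqref{eq:numbermarks} this gives the estimate with
\[
 C_{\mathrm{clos}}\le \frac{4F\kappa_3(R_{\mathrm{AMP}}+x+3C_h)^3}{x\,c_V}
\]
for every $x\in(0,F]$.

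It remains to optimize in $x$. Writing $A:=R_{\mathrm{AMP}}+3C_h$, the function $(A+x)^3/x$ is minimized at $x=A/2$, giving $27A^2/4$. This choice is admissible only if $A/2\le F=R_{\mathrm{AMP}}+C_h$, which holds because $R_{\mathrm{AMP}}=4H_{\mathrm{AMP}}C_h\ge C_h$. Substituting $\kappa_3=4\pi/3$ yields
\[
 4F\cdot\tfrac{4\pi}{3}\cdot\tfrac{27}{4}A^2/c_V=36\pi F A^2/c_V .
\]
With $F=C_h(4H_{\mathrm{AMP}}+1)$ and $A=C_h(4H_{\mathrm{AMP}}+3)$, this is exactly the stated bound $36\pi C_h^3(4H_{\mathrm{AMP}}+1)(4H_{\mathrm{AMP}}+3)^2/c_V$.

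The only delicate point is checking that both weight lemmas apply to the same sets. \Cref{lem:upperweight} uses packing (\Cref{lem:packing}) in the final mesh $\mathcal{T}_L$, which is conforming by \Cref{thm:AMPtermination}. Since the sum runs over $T\in\mathcal{T}_L\setminus\mathcal{T}_0\subseteq\mathcal{T}_L$, this is legitimate. Beyond that check, the argument is bookkeeping plus the one-variable optimization.
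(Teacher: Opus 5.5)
Your proposal is correct and follows the paper's proof essentially line by line. It uses the same reduction $\card\mathcal{T}_L-\card\mathcal{T}_0\le\card(\mathcal{T}_L\setminus\mathcal{T}_0)$, which is in fact immediate from $\card(\mathcal{T}_L\cap\mathcal{T}_0)\le\card\mathcal{T}_0$, so your forest-counting detour is unnecessary; the same double counting via \Cref{lem:lowerweight,lem:upperweight}; and the same minimization at $x_*=(R_{\mathrm{AMP}}+3C_h)/2$, admissible because $R_{\mathrm{AMP}}\ge C_h$, which reproduces the stated constant exactly.
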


\begin{proof}
Fix $x\in(0,F]$.  By \Cref{lem:lowerweight,lem:upperweight},
\begin{align*}
 \card\mathcal{T}_L-\card\mathcal{T}_0
 &\le \card(\mathcal{T}_L\setminus\mathcal{T}_0)\\
 &\le \frac1x
       \sum_{T\in\mathcal{T}_L\setminus\mathcal{T}_0}
       \sum_{M\in\mathfrak M}\lambda(T,M)\\
 &=\frac1x
       \sum_{M\in\mathfrak M}
       \sum_{T\in\mathcal{T}_L\setminus\mathcal{T}_0}\lambda(T,M)\\
 &\le \frac{4F}{x}\,
       \frac{\kappa_3(E+3C_h)^3}{c_V}\card\mathfrak M\\
 &=\frac{4F}{x}\,
       \frac{\kappa_3(R_{\mathrm{AMP}}+x+3C_h)^3}{c_V}
   \sum_{\ell=0}^{L-1}\card\M_\ell,
\end{align*}
where \eqref{eq:numbermarks} was used in the last step.
Thus one may take
\[
 C_{\mathrm{clos}}^{(x)}
 :=\frac{4F}{x}\,
   \frac{\kappa_3(R_{\mathrm{AMP}}+x+3C_h)^3}{c_V}.
\]
Differentiating its logarithm gives the unconstrained minimizer
\[
 x_*=\frac{R_{\mathrm{AMP}}+3C_h}{2}
     =\frac{C_h(4H_{\mathrm{AMP}}+3)}2.
\]
Since $H_{\mathrm{AMP}}\ge1$, one has
$R_{\mathrm{AMP}}=4C_hH_{\mathrm{AMP}}\ge C_h$, which is exactly the condition
$x_*\in(0,F]$.  Substitution gives the stated bound.
\end{proof}

\section{Closure complexity of B\"ansch refinement}
\label{sec:bansch-analysis}

We next return to the face-based formulation introduced by B\"ansch.  The
marking, local bisection rule, red--black transition, and termination and
conformity of the algorithm formulated there are due to
B\"ansch~\cite{Bansch1991}; the local equivalence with
$\BisectTet$ is due to Arnold, Mukherjee, and Pouly
\cite{AMP2000}.  The argument below first isolates the initial
two-edge ambiguities and then transfers arbitrary, possibly
history-dependent executions to AMP\@.  This yields termination and a uniform
closure estimate for all such choices.

\subsection{Intrinsic face marking}

A \emph{B\"ansch face marking} assigns one edge
\[
  \mu_F\in\mathcal{E}(F)
\]
to every triangular face $F$.  For a tetrahedron $K$, define its set of
\emph{global refinement edges} by
\begin{equation}\label{eq:bansch-global-set}
 \mathcal G(K)
 :=
 \{e\in\mathcal{E}(K):\mu_F=e
     \text{ for both faces $F\subset K$ containing $e$}\}.
\end{equation}
B\"ansch's admissibility assumptions are
\begin{enumerate}[label=\textnormal{(F\arabic*)},leftmargin=2.8em]
\item $\mathcal G(K)\ne\varnothing$ for every tetrahedron $K$;
\item if a face $F$ is shared by two tetrahedra, then its marked edge is the
same when viewed from either side.
\end{enumerate}
We use \emph{intrinsic} face marks, attached to geometric faces rather than
to incidences $(K,F)$, so item~\textnormal{(F2)} is built into the notation.
An intrinsic face marking is called \emph{admissible} if it also satisfies
item~\textnormal{(F1)}~\cite[p.~184]{Bansch1991}.

\begin{fixedfigure}
\begin{tikzpicture}[
  faceedge/.style={line width=.38pt},
  facemark/.style={line width=1.15pt},
  every node/.style={font=\footnotesize}
]
\draw[faceedge] (-5.0,0)--(-3.6,0)--(-4.3,1.18)--cycle;
\draw[facemark] (-4.88,.07)--(-3.72,.07);
\node at (-4.3,-.30) {$F\subset K^+$};
\draw[faceedge] (-2.8,0)--(-1.4,0)--(-2.1,1.18)--cycle;
\draw[facemark] (-2.68,.07)--(-1.52,.07);
\node at (-2.1,-.30) {$F\subset K^-$};
\node at (-3.2,.55) {$=$};
\node at (-3.2,-.76) {$\mu_F\text{ is independent of the side}$};

\begin{scope}[xshift=2.0cm]
  \coordinate (ba) at (-.55,0);
  \coordinate (bb) at (.55,0);
  \coordinate (bc) at (0,-.95);
  \coordinate (bdt) at (0,.95);
  \coordinate (bdl) at (-1.10,-.95);
  \coordinate (bdr) at (1.10,-.95);
  \draw[faceedge] (ba)--(bb)--(bc)--cycle;
  \draw[faceedge] (ba)--(bdt)--(bb);
  \draw[faceedge] (ba)--(bdl)--(bc);
  \draw[faceedge] (bb)--(bdr)--(bc);
  \draw[facemark] (-.47,.07)--(.47,.07);
  \draw[facemark] (-.47,-.07)--(.47,-.07);
  \draw[facemark] (-1.00,-.88)--(-.10,-.88);
  \draw[facemark] (.10,-.88)--(1.00,-.88);
  \node[left] at (ba) {$a$};
  \node[right] at (bb) {$b$};
  \node[below] at (bc) {$c$};
  \node[above] at (bdt) {$d$};
  \node[left] at (bdl) {$d$};
  \node[right] at (bdr) {$d$};
  \node at (0,-1.38) {$\mathcal G(K)=\{[a,b],[c,d]\}$};
\end{scope}
\end{tikzpicture}
\caption{Face compatibility and the two-edge ambiguity in the B\"ansch
marking.  On the left, a shared face carries the same intrinsic mark from
both incident tetrahedra.  On the right, the unfolded copies show that $[a,b]$
and $[c,d]$ each mark both faces containing that edge, so both are global
refinement edges; they are opposite.}
\label{fig:banschmarking}
\end{fixedfigure}

\begin{lemma}[The two-edge alternative]\label{lem:bansch-two-global}
An admissibly face-marked tetrahedron has either one or two global refinement
edges.  If it has two, they are opposite.
\end{lemma}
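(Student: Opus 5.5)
The plan is a counting argument on the four faces of $K$. Each face $F\subset K$ carries one marked edge $\mu_F\in\mathcal{E}(F)$. An edge $e\in\mathcal{E}(K)$ lies in exactly two faces of $K$, and $e\in\mathcal G(K)$ exactly when both of those faces choose $e$. Since each face chooses exactly one edge, a face can certify at most one global refinement edge. Distinct global edges therefore consume disjoint pairs of faces, and with only four faces there are at most two global refinement edges. Item~\textnormal{(F1)} gives at least one, which settles the count.

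For the second assertion, suppose $e,e'\in\mathcal G(K)$ are distinct. Then the face pair of $e$ and the face pair of $e'$ must be disjoint. Otherwise a common face $F$ would satisfy $\mu_F=e$ and $\mu_F=e'$, which is impossible.

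Now I will use the combinatorics of the tetrahedron $[v_0,v_1,v_2,v_3]$. Faces correspond to the omitted vertex. The two faces containing the edge $[v_i,v_j]$ are those omitting $v_k$ and omitting $v_l$, where $\{k,l\}$ is the complementary index pair.

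\begin{itemize}
\item If $e$ and $e'$ share a vertex, say $e=[v_0,v_1]$ and $e'=[v_0,v_2]$, their face pairs omit $\{v_2,v_3\}$ and $\{v_1,v_3\}$.
\item These pairs share the face omitting $v_3$, namely $[v_0,v_1,v_2]$, which contains both edges.
\item Hence adjacent distinct edges always have a common face, and disjointness of the face pairs forces $e$ and $e'$ to be opposite.
\end{itemize}

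Conversely, the example in \Cref{fig:banschmarking} shows that the two-edge case can actually occur, though the statement only requires the upper bound and the opposite-edge conclusion.

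There is no real obstacle here. The only care needed is to state cleanly that each face contains exactly three edges, that each edge lies in exactly two faces, and that two distinct edges share a face if and only if they meet.
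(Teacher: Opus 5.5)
Your argument is correct and is essentially the paper's. Both rest on the fact that a face carries a single mark, so two distinct global edges cannot share a face and must be opposite, with (F1) supplying at least one; the only difference is that you bound the count first via disjoint face pairs among four faces, while the paper first derives oppositeness and then notes that an opposite pair already uses all four faces, which excludes a third global edge.
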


\begin{proof}
A triangular face has only one mark, so two distinct global edges cannot lie
in one face.  Two edges of a tetrahedron fail to share a face exactly when
they are opposite.  The four faces incident to an opposite pair exhaust the
faces of the tetrahedron, which also rules out a third global edge.
\end{proof}

Fix a global refinement edge $[a,b]\in\mathcal G(K)$, write the remaining
vertices as $c,d$, and set
\[
 \mu_a:=\mu_{[a,c,d]},
 \qquad
 \mu_b:=\mu_{[b,c,d]}.
\]
Since the two faces containing $[a,b]$ are marked by $[a,b]$, the remaining
configuration is determined by $(\mu_a,\mu_b)$.  B\"ansch calls $K$
\emph{red} or \emph{black}, respectively, when
\[
\begin{aligned}
 K\text{ is red}
 &\quad\Longleftrightarrow\quad
 (\mu_a,\mu_b)\in
 \{([a,c],[b,d]),([a,d],[b,c])\},\\
 K\text{ is black}
 &\quad\Longleftrightarrow\quad
 (\mu_a,\mu_b)\in
 \{([a,c],[b,c]),([a,d],[b,d])\}.
\end{aligned}
\]
In a black configuration there is therefore a unique $w\in\{c,d\}$ such
that $(\mu_a,\mu_b)=([a,w],[b,w])$.  In the two remaining configurations
exactly one of $\mu_a,\mu_b$ equals $[c,d]$, or both do.  B\"ansch leaves these two
configurations unnamed.

\begin{fixedfigure}
\begin{tikzpicture}[
  faceedge/.style={line width=.38pt},
  facemark/.style={line width=1.15pt,line cap=round},
  tetedge/.style={line width=.45pt,line join=round},
  tethidden/.style={densely dashed,line width=.40pt},
  tetmark/.style={line width=1.15pt,line cap=round},
  dualmark/.style={double=white,double distance=.75pt,line width=.42pt,
                   line cap=round},
  vertex/.style={font=\scriptsize},
  every node/.style={font=\footnotesize}
]
\newcommand{\BanschTetra}{%
  \coordinate (ta) at (-.72,-.22);
  \coordinate (tb) at ( .00,-.60);
  \coordinate (tc) at ( .04, .80);
  \coordinate (td) at ( .72,-.18);
  \draw[tethidden] (ta)--(td);
  \draw[tetedge] (ta)--(tb)--(td);
  \draw[tetedge] (tc)--(ta) (tc)--(tb) (tc)--(td);
}
\newcommand{\BanschNet}{%
  \draw[faceedge] (-.55,0)--(.55,0)--(0,-.95)--cycle;
  \draw[faceedge] (-.55,0)--(0,.95)--(.55,0);
  \draw[faceedge] (-.55,0)--(-1.10,-.95)--(0,-.95);
  \draw[faceedge] (.55,0)--(1.10,-.95)--(0,-.95);
  \node[vertex,left] at (-.55,0) {$a$};
  \node[vertex,right] at (.55,0) {$b$};
  \node[vertex,below] at (0,-.95) {$c$};
  \node[vertex,above] at (0,.95) {$d$};
  \node[vertex,below left] at (-1.10,-.95) {$d$};
  \node[vertex,below right] at (1.10,-.95) {$d$};
}
\newcommand{\BanschAB}{%
  \draw[facemark] (-.47,.07)--(.47,.07);
  \draw[facemark] (-.47,-.07)--(.47,-.07);
}
\newcommand{\BanschAC}{\draw[facemark] (-.57,-.10)--(-.10,-.90);}
\newcommand{\BanschBC}{\draw[facemark] (.57,-.10)--(.10,-.90);}
\newcommand{\BanschBD}{\draw[facemark] (.53,-.10)--(1.00,-.90);}
\newcommand{\BanschCDleft}{%
  \draw[facemark] (-1.00,-.88)--(-.10,-.88);
}
\newcommand{\BanschCDright}{%
  \draw[facemark] (.10,-.88)--(1.00,-.88);
}

\begin{scope}
  \begin{scope}[yshift=1.45cm,scale=1.00]
    \BanschTetra
    \draw[dualmark] (ta)--(tb);
    \draw[tetmark] (ta)--(tc) (tb)--(td);
  \end{scope}
  \begin{scope}[yshift=-.60cm]
    \BanschNet\BanschAB\BanschAC\BanschBD
  \end{scope}
  \node[align=center] at (0,-2.12)
    {\emph{red}\\[-1pt]$([a,c],[b,d])$};
\end{scope}

\begin{scope}[xshift=2.9cm]
  \begin{scope}[yshift=1.45cm,scale=1.00]
    \BanschTetra
    \draw[dualmark] (ta)--(tb);
    \draw[tetmark] (ta)--(tc) (tb)--(tc);
  \end{scope}
  \begin{scope}[yshift=-.60cm]
    \BanschNet\BanschAB\BanschAC\BanschBC
  \end{scope}
  \node[align=center] at (0,-2.12)
    {\emph{black}\\[-1pt]$([a,c],[b,c])$};
\end{scope}

\begin{scope}[xshift=5.8cm]
  \begin{scope}[yshift=1.45cm,scale=1.00]
    \BanschTetra
    \draw[dualmark] (ta)--(tb);
    \draw[tetmark] (ta)--(tc) (tc)--(td);
  \end{scope}
  \begin{scope}[yshift=-.60cm]
    \BanschNet\BanschAB\BanschAC\BanschCDright
  \end{scope}
  \node[align=center] at (0,-2.12)
    {\emph{unnamed}\\[-1pt]$([a,c],[c,d])$};
\end{scope}

\begin{scope}[xshift=8.7cm]
  \begin{scope}[yshift=1.45cm,scale=1.00]
    \BanschTetra
    \draw[dualmark] (ta)--(tb);
    \draw[dualmark] (tc)--(td);
  \end{scope}
  \begin{scope}[yshift=-.60cm]
    \BanschNet\BanschAB\BanschCDleft\BanschCDright
  \end{scope}
  \node[align=center] at (0,-2.12)
    {\emph{unnamed}\\[-1pt]$([c,d],[c,d])$};
\end{scope}
\end{tikzpicture}
\caption{The four intrinsic marking configurations relative to a selected
global refinement edge $[a,b]$, shown as marked tetrahedra (top) and unfolded
face nets (bottom).  The first two configurations are called red and black
by B\"ansch; the last two are unnamed.  Thick strokes are face marks, with
double strokes when both incident faces mark the same geometric edge.}
\label{fig:bansch-configurations}
\end{fixedfigure}

B\"ansch notes that admissible markings can be constructed by making the
longest edge of every face unique through a hypothetical perturbation and
marking that edge~\cite[p.~185]{Bansch1991}.

\subsection{One B\"ansch bisection}

Let $K$ be an active tetrahedron and suppose that
$[a,b]\in\mathcal G(K)$ is the edge selected for its bisection.  Write the
remaining vertices as $c,d$ and put
\[
 z:=\operatorname{mid}([a,b]),
 \qquad
 K_a=[a,z,c,d],
 \qquad
 K_b=[b,z,c,d].
\]
Every initial tetrahedron carries the state $\varepsilon_K=0$.  If $K$ is
noninitial with immediate parent $T$, its state is
\[
 \varepsilon_K=1
 \quad\Longleftrightarrow\quad
 K\text{ and }T\text{ are both black}.
\]
The children and their data are determined by the following rules.
\begin{enumerate}[label=\textnormal{(R\arabic*)},leftmargin=2.8em]
\item An inherited face retains its mark from $K$.
\item A cut face is marked by its edge opposite $z$; thus
\[
\begin{aligned}
 \mu_{[a,z,c]}&=[a,c],&\qquad \mu_{[a,z,d]}&=[a,d],\\
 \mu_{[b,z,c]}&=[b,c],&\qquad \mu_{[b,z,d]}&=[b,d].
\end{aligned}
\]
\item If $\varepsilon_K=0$, the common new face is marked by
\[
 \mu_{[z,c,d]}=[c,d].
\]
If $\varepsilon_K=1$, then $K$ is black, so there is a unique
$w\in\{c,d\}$ with $(\mu_a,\mu_b)=([a,w],[b,w])$.  The exceptional mark is
\[
 \mu_{[z,c,d]}=[z,w].
\]
\item After the face marks have been assigned, set
\[
 \varepsilon_{K_i}=1
 \quad\Longleftrightarrow\quad
 K_i\text{ and }K\text{ are both black},
 \qquad i\in\{a,b\}.
\]
\end{enumerate}
These are B\"ansch's local three-dimensional rules, written in intrinsic
face notation.

\begin{fixedfigure}
\begin{tikzpicture}[
  every node/.style={font=\footnotesize},
  tetedge/.style={line width=.45pt,line join=round},
  hidden/.style={densely dashed,line width=.40pt},
  selected/.style={line width=1.15pt,line cap=round},
  newedge/.style={densely dashed,line width=.85pt},
  faceedge/.style={line width=.45pt},
  facemark/.style={line width=1.15pt,line cap=round},
  flow/.style={-{Latex[length=2mm]},line width=.45pt}
]
\begin{scope}[xshift=-3.40cm]
  \coordinate (a) at (-2.0,-.70);
  \coordinate (b) at ( 2.0,-.70);
  \coordinate (c) at ( .65,1.25);
  \coordinate (d) at (-.15,.08);
  \coordinate (z) at (0,-.70);
  \fill[black!5] (z)--(c)--(d)--cycle;
  \draw[tetedge] (a)--(c)--(b)--cycle;
  \draw[tetedge] (a)--(d)--(b);
  \draw[hidden] (c)--(d);
  \draw[selected] (a)--(b);
  \draw[newedge] (z)--(c) (z)--(d);
  \fill (a) circle (1.15pt) node[below left] {$a$};
  \fill (b) circle (1.15pt) node[below right] {$b$};
  \fill (c) circle (1.15pt) node[above] {$c$};
  \fill (d) circle (1.15pt) node[above left] {$d$};
  \fill (z) circle (1.50pt) node[below] {$z$};
  \node at (0,-1.38)
    {$K_a=[a,z,c,d]$,\qquad $K_b=[b,z,c,d]$};
\end{scope}

\draw[flow] (-1.75,.28)--(-.10,.28)
  node[midway,above,align=center,font=\scriptsize]
  {common new face\\$[z,c,d]$};

\begin{scope}[xshift=1.25cm]
  \coordinate (zo) at (0,-.30);
  \coordinate (co) at (-.88,1.00);
  \coordinate (do) at (.88,1.00);
  \draw[faceedge] (zo)--(co)--(do)--cycle;
  \draw[facemark] (co)--(do);
  \fill (zo) circle (1.05pt) node[below] {$z$};
  \fill (co) circle (1.05pt) node[above right] {$c$};
  \fill (do) circle (1.05pt) node[above right] {$d$};
  \node[align=center,text width=2.1cm,font=\scriptsize] at (0,-1.18)
    {ordinary case\\$\varepsilon_K=0$\\$\mu_{[z,c,d]}=[c,d]$};
\end{scope}

\begin{scope}[xshift=4.45cm]
  \coordinate (ze) at (0,-.30);
  \coordinate (ce) at (-.88,1.00);
  \coordinate (de) at (.88,1.00);
  \draw[faceedge] (ze)--(ce)--(de)--cycle;
  \draw[facemark] (ze)--(ce);
  \fill (ze) circle (1.05pt) node[below] {$z$};
  \fill (ce) circle (1.05pt) node[above left] {$c=w$};
  \fill (de) circle (1.05pt) node[above right] {$d$};
  \node[align=center,text width=2.6cm,font=\scriptsize] at (0,-1.18)
    {black--black case\\
     $\varepsilon_K=1$\\$\mu_{[z,c,d]}=[z,w]$};
\end{scope}
\end{tikzpicture}
\caption{One B\"ansch bisection and the two possible marks of its common new
face.  The ordinary case uses $[c,d]$, whereas the black--black case uses
$[z,w]$.  Here $w=c$; the case $w=d$ is symmetric.}
\label{fig:bansch-bisection}
\end{fixedfigure}

\begin{algorithm}[H]
\caption{One B\"ansch bisection \BanschBisectTet}
\label{alg:banschbisecttet}
\KwIn{An active tetrahedron $K$ and a selected edge
$[a,b]\in\mathcal G(K)$}
\KwOut{the two marked children $\{K_a,K_b\}=\BanschBisectTet(K,[a,b])$ with
their states}
Write $\mathcal{V}(K)\setminus\{a,b\}=\{c,d\}$\;
$z\gets\operatorname{mid}([a,b])$\;
$K_a\gets[a,z,c,d]$ and $K_b\gets[b,z,c,d]$\;
Assign the face marks and child states according to
\textnormal{(R1)}--\textnormal{(R4)}\;
\Return $\{K_a,K_b\}$\;
\end{algorithm}

\begin{proposition}[B\"ansch local structure]
\label{prop:bansch-local}
The two children of a B\"ansch bisection have the same configuration and
unique global refinement edges.  Thus only an unbisected tetrahedron of the
initial mesh can have two global refinement edges.  Writing
$\mathrm{black}_i$ for a black tetrahedron $K$ with $\varepsilon_K=i$, the
transition is
\[
 \mathrm{red}\longrightarrow\mathrm{black}_0
 \longrightarrow\mathrm{black}_1
 \longrightarrow\mathrm{red},
 \qquad
 \mathrm{unnamed}\longrightarrow\mathrm{black}_0.
\]
\end{proposition}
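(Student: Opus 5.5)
The plan is a direct finite computation on a single B\"ansch bisection, organized by the four configurations of $K$ relative to the selected edge $[a,b]$. After a permutation of $\{a,b\}$ and of $\{c,d\}$, which commutes with (R1)--(R4), there are four cases to check: red $(\mu_a,\mu_b)=([a,c],[b,d])$; black $([a,c],[b,c])$ with $\varepsilon_K\in\{0,1\}$; and the unnamed configurations $([a,c],[c,d])$ and $([c,d],[c,d])$. For each case, and for the child $K_a=[a,z,c,d]$ (the child $K_b$ follows by the symmetry $a\leftrightarrow b$, which also swaps $\mu_a,\mu_b$ appropriately), I will list the four face marks:
\begin{itemize}
\item the inherited face $[a,c,d]$ carries $\mu_a$ by (R1);
\item the cut faces $[a,z,c]$ and $[a,z,d]$ carry $[a,c]$ and $[a,d]$ by (R2);
\item the new face $[z,c,d]$ carries $[c,d]$ or $[z,w]$ by (R3).
\end{itemize}

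\textbf{Global refinement edges.} Read off $\mathcal G(K_a)$ from \eqref{eq:bansch-global-set}. Whenever $\mu_a=[a,c]$, the edge $[a,c]$ marks both faces containing it, namely $[a,c,d]$ and $[a,z,c]$. The opposite edge $[z,d]$ lies in $[a,z,d]$, which is marked $[a,d]$, so $[z,d]\notin\mathcal G(K_a)$. By \Cref{lem:bansch-two-global}, $\mathcal G(K_a)=\{[a,c]\}$ is a singleton. When $\mu_a=[c,d]$, the candidate is $[c,d]$, provided the new face is marked $[c,d]$, which holds since $\varepsilon_K=0$ whenever $K$ is not black. Its opposite edge $[a,z]$ lies in cut faces marked $[a,c]$ and $[a,d]$, so again $\mathcal G(K_a)$ is a singleton. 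The corollary that only unbisected initial tetrahedra can have two global refinement edges then follows, because every noninitial tetrahedron is some child.

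\textbf{Colours and the transition.} Relative to the unique global edge $[a,c]$ of $K_a$, the remaining vertices are $z$ and $d$. The two relevant marks are the mark of $[a,z,d]$, which is $[a,d]$, and the mark of the new face $[z,c,d]$. If the new face is marked $[c,d]$, the pair is $([a,d],[c,d])$, so $K_a$ is black with $w=d$. If instead it is marked $[z,w]$ with $w=c$, coming from a $\mathrm{black}_1$ parent, the pair is $([a,d],[c,z])$, so $K_a$ is red. Doing the same bookkeeping for $K_b$ with $\mu_b$ shows that it lands in the same colour class. For the unnamed-parent case with global edge $[c,d]$, I will carry out the analogous computation: the faces of $[c,d]$ give the pair $([c,a],[d,a])$, which is black. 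Finally, attach the states using (R4): a black child of a red or unnamed parent gets $\varepsilon=0$; a black child of a $\mathrm{black}_0$ parent gets $\varepsilon=1$; and a child of a $\mathrm{black}_1$ parent is red. This reproduces the displayed transition diagram.

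\textbf{Main obstacle.} The main difficulty is careful bookkeeping rather than any conceptual step. Relabeling after each bisection is error prone, especially in the mixed unnamed case $([a,c],[c,d])$, where $K_a$ and $K_b$ reach their global edges through different faces. I would verify that case explicitly to confirm that both children are black rather than just one of them.
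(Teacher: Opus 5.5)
Your proposal is correct, and I checked every case you list. For uniqueness you follow the paper: it lists the four face marks of $K_a$, namely $\mu_a$, $[a,c]$, $[a,d]$ and the new-face mark. It then reads off $\mathcal G(K_a)=\{\mu_a\}$ when $\varepsilon_K=0$ and $\mathcal G(K_a)=\{[a,w]\}$ when $\varepsilon_K=1$. You do the same up to the symmetry $c\leftrightarrow d$, and you exclude the opposite edge through the two-edge lemma, which is slightly more explicit.

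The real difference is the colour transition. The paper does not compute it. It cites B\"ansch's three-dimensional case analysis, sharpened for the unnamed configurations by the AMP type transition $M,O\to P_u$. That citation implicitly uses the dictionary red, $\mathrm{black}_0$, $\mathrm{black}_1$, unnamed $\leftrightarrow A,P_u,P_f,M,O$, which the paper only states later and justifies by another citation. You instead compute each child's pair relative to its unique global edge and attach states via \textnormal{(R4)}. This makes the proposition self-contained and shows the mechanism: the exceptional \textnormal{(R3)} mark $[z,w]$ is exactly what turns the children of a $\mathrm{black}_1$ tetrahedron red, since it gives the pair $([a,d],[c,z])$.

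On the mixed unnamed case $([a,c],[c,d])$ you flagged: $K_a$ has global edge $[a,c]$ with pair $([a,d],[c,d])$. The child whose inherited face carries $[c,d]$ is $K_b$, with global edge $[c,d]$ and pair $([c,b],[d,b])$, not $([c,a],[d,a])$. Both children are therefore black, and both are $\mathrm{black}_0$ because the parent is not black.

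The paper's route is shorter but rests on external case analyses. Yours costs a table of four cases times two children and needs no outside input.
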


\begin{proof}
The configuration transition is the three-dimensional case analysis in
\cite{Bansch1991}, sharpened for the unnamed
configurations by the AMP type transition in
\cite[Section~2]{AMP2000}.

It remains to check uniqueness.  For $K_a$, the four face marks are
$\mu_a$, $[a,c]$, $[a,d]$, and the new-face mark.  If $\varepsilon_K=0$, the
last mark is $[c,d]$; according as $\mu_a=[a,c]$, $[a,d]$, or $[c,d]$, the
unique global refinement edge of $K_a$ is $[a,c]$, $[a,d]$, or $[c,d]$.  If
$\varepsilon_K=1$, then $K$ is black, so $\mu_a=[a,w]$ and the new-face mark
is $[z,w]$ for one $w\in\{c,d\}$; the unique global refinement edge is then
$[a,w]$.
The argument for $K_b$ is symmetric.  Thus every child has a unique global
refinement edge, which also proves the ambiguity assertion.
\end{proof}

\subsection{B\"ansch refinement}

An \emph{admissible initial B\"ansch marked mesh} $\mathcal B_0$ consists of a
finite conforming tetrahedral mesh, an admissible intrinsic face marking, and
the state $\varepsilon_K=0$ on every $K\in\mathcal B_0$.  A current
\emph{B\"ansch marked mesh} $\mathcal B$ is the active mesh, intrinsic face marks,
and states obtained from $\mathcal B_0$ by the local rule above.  In
particular, its states are consistent with \textnormal{(R4)}.  Moreover,
every active $K$ satisfies $\mathcal G(K)\ne\varnothing$: an unbisected
initial tetrahedron retains its nonempty set $\mathcal G(K)$, whereas every
descendant has a unique global refinement edge by
\Cref{prop:bansch-local}.
A B\"ansch marked mesh is called \emph{conforming} if its underlying active
tetrahedral mesh is conforming.

Only an initial tetrahedron $K$ with $\card\mathcal G(K)=2$ requires a
choice.  It may remain active through several refinement calls; the choice is
made when it is first bisected.  A \emph{selection strategy} $\mathcal A$
assigns one edge $\mathcal A(\mathcal H;K)\in\mathcal G(K)$ at that event,
possibly as a function of the preceding batch history $\mathcal H$.  For any
active tetrahedron selected for bisection, let
$e_{\mathcal A}(\mathcal H;K)$ denote this choice in the
initial two-edge case and the unique member of $\mathcal G(K)$ otherwise.
We suppress $\mathcal H$ below; all choices in one batch use the same batch-start
history.

\begin{algorithm}[H]
\caption{Simultaneous B\"ansch bisection \BanschBisectTets}
\label{alg:banschbisecttets}
\KwIn{A B\"ansch marked mesh $\mathcal B$, a set
$\mathcal S\subseteq\mathcal B$, and a selection strategy $\mathcal A$}
\KwOut{the B\"ansch marked mesh $\BanschBisectTets_{\mathcal A}(\mathcal B,\mathcal S)$
obtained by bisecting every $K\in\mathcal S$}

$\widehat{\mathcal B}\gets\mathcal B\setminus\mathcal S$\;

\For{$K\in\mathcal S$}{
  $e_K\gets e_{\mathcal A}(K)$\;
  $\mathcal C_K\gets\BanschBisectTet(K,e_K)$\;
  $\widehat{\mathcal B}\gets\widehat{\mathcal B}\cup\mathcal C_K$\;
}

\Return $\widehat{\mathcal B}$\;
\end{algorithm}

All edges used in a batch are determined from the fixed input marked mesh
$\mathcal B$; no child created in the batch is bisected again within that
batch.

\begin{algorithm}[H]
\caption{One B\"ansch refinement call \BanschRefine}
\label{alg:banschrefine}
\KwIn{A conforming B\"ansch marked mesh $\mathcal B$, a marking set
$\M\subseteq\mathcal B$, and a selection strategy $\mathcal A$}
\KwOut{upon termination, the B\"ansch marked mesh
$\BanschRefine_{\mathcal A}(\mathcal B,\M)$ without hanging vertices}
$\mathcal S\gets\M$\;
\While{$\mathcal S\ne\varnothing$}{
  $\mathcal B\gets
    \BanschBisectTets_{\mathcal A}(\mathcal B,\mathcal S)$\;
  $\mathcal S\gets
    \{K\in\mathcal B:K\text{ has a hanging vertex}\}$\;
}
\Return $\mathcal B$\;
\end{algorithm}

\Cref{alg:banschrefine} is B\"ansch's global three-dimensional refinement
loop, with the deferred initial two-edge choices made explicit through
$\mathcal A$.

For the three-dimensional algorithm, B\"ansch proves termination, conformity,
and stability.  That formulation
leaves a possible two-edge choice implicit.  Termination uniformly over the
selection strategies above will instead follow from the deferred-resolution
argument below.

\subsection{Finite resolution and AMP correspondence}

\begin{definition}[Resolution space]\label{def:bansch-resolution-space}
For an admissible initial B\"ansch marked mesh $\mathcal B_0$, let
\[
 \mathfrak D_0
 :=
 \{K\in\mathcal B_0:\card\mathcal G(K)=2\}.
\]
A \emph{resolution} is a simultaneous choice of one edge
$\rho(K)\in\mathcal G(K)$ for every $K\in\mathfrak D_0$.  The set of all
resolutions is
\begin{equation}\label{eq:bansch-resolution-space}
 \Sigma_{\mathrm B}
 :=
 \prod_{K\in\mathfrak D_0}\mathcal G(K).
\end{equation}
\end{definition}

\begin{lemma}[Finite initial resolution]\label{lem:bansch-resolution-space}
The resolution space is finite, with
\[
 \card\Sigma_{\mathrm B}=2^{\card\mathfrak D_0}.
\]
Moreover, a resolution fixes every possible initial ambiguity, while all
descendant refinement edges are unique.
\end{lemma}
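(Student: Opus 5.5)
The plan is to read the lemma as two separate claims. One is a counting statement about the product in \eqref{eq:bansch-resolution-space}. The other is structural and follows from the local results already established.

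\emph{Cardinality.} First, $\mathfrak D_0\subseteq\mathcal B_0$ is finite because $\mathcal B_0$ is a finite mesh. By definition, every $K\in\mathfrak D_0$ has $\card\mathcal G(K)=2$; \Cref{lem:bansch-two-global} shows this is the largest possible value, and the two edges are opposite. So $\Sigma_{\mathrm B}$ is a finite Cartesian product of $\card\mathfrak D_0$ two-element sets, and $\card\Sigma_{\mathrm B}=\prod_{K\in\mathfrak D_0}2=2^{\card\mathfrak D_0}$. If $\mathfrak D_0$ is empty, the product is the one-point set and the formula still holds.

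\emph{Uniqueness of refinement edges.} Next, I would show that once $\rho\in\Sigma_{\mathrm B}$ is fixed, every bisection in any history has a single well-defined edge. Every active tetrahedron is either an unbisected initial tetrahedron or a proper descendant, so there are two cases.
\begin{enumerate}[label=(\roman*)]
\item For an initial $K\notin\mathfrak D_0$, admissibility (F1) together with \Cref{lem:bansch-two-global} gives $\card\mathcal G(K)=1$. For $K\in\mathfrak D_0$, the edge $\rho(K)$ is prescribed.
\item For a proper descendant, induct on generation. \Cref{prop:bansch-local} says each child of a B\"ansch bisection has a unique global refinement edge, and this holds whichever edge of $\mathcal G(\text{parent})$ was used. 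One point needs checking: the children's marks under (R1)--(R4) depend only on the parent's face marks, its state $\varepsilon$, and the chosen edge. The state $\varepsilon$ is determined by the configuration history, and this history is itself fixed once the ancestor choices are fixed.
\end{enumerate}
Hence a resolution fixes every initial ambiguity. Also, no choice can arise later, because $\mathcal G$ of any descendant is a singleton.

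\emph{Expected obstacle.} The one subtle point is how this relates to history-dependent strategies $\mathcal A$. A strategy makes its choice only when an initial tetrahedron is first bisected, and an initial tetrahedron is bisected at most once. So any single execution under $\mathcal A$ fixes a value in $\mathcal G(K)$ for each $K\in\mathfrak D_0$ that is actually bisected. Extending these values arbitrarily to the tetrahedra that are never bisected gives an element of $\Sigma_{\mathrm B}$ whose associated deterministic edge choice agrees with the execution. I would record this as the reason the finiteness claim is meaningful. The full transfer to AMP histories, however, belongs to the subsequent results and not to this lemma.
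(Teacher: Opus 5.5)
Your proposal is correct and follows the paper's proof. The count is a finite product of two-element factors by \Cref{lem:bansch-two-global}, non-ambiguous initial tetrahedra have a unique global edge by (F1), and descendant uniqueness is \Cref{prop:bansch-local}. The induction on generation is unnecessary, since \Cref{prop:bansch-local} applies to every B\"ansch bisection directly. Your closing remark on history-dependent strategies is really the content of \Cref{thm:bansch-deferred}.
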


\begin{proof}
By \Cref{lem:bansch-two-global}, every factor in
\eqref{eq:bansch-resolution-space} consists of two opposite edges, whereas
every initial tetrahedron in
$\mathcal B_0\setminus\mathfrak D_0$ has a unique global refinement edge.
Since the initial mesh is finite, the product is finite and has the stated
cardinality.  The choices in different factors are independent because they
do not alter the fixed intrinsic face marks.  The assertion about descendants
is \Cref{prop:bansch-local}.
\end{proof}

For $\rho\in\Sigma_{\mathrm B}$, select $\rho(K)$ as the refinement edge
of every $K\in\mathfrak D_0$, select the unique global refinement edge of every other
initial tetrahedron, retain the intrinsic face marks, and unset all AMP flags.
The resulting initial AMP marked mesh is denoted by $\mathcal{T}_0^\rho$.
Thus $\mathcal B_0$ and $\mathcal{T}_0^\rho$ have the same underlying
tetrahedral mesh and intrinsic face marks, while the AMP refinement edges
encode the resolution of every B\"ansch ambiguity.

With the selected global edge as the AMP refinement edge, red,
$\mathrm{black}_0$, $\mathrm{black}_1$, and the unnamed configurations with
one or two occurrences of $[c,d]$ are, respectively, the AMP types
$A,P_u,P_f,M,O$.  Hence the state evolution is
\[
 A\longrightarrow P_u\longrightarrow P_f\longrightarrow A,
 \qquad
 M,O\longrightarrow P_u.
\]

\begin{proposition}[One-step B\"ansch--AMP correspondence]
\label{prop:bansch-amp-onestep}
After a global refinement edge has been selected, one call of
$\BanschBisectTet$ and one application of $\BisectTet$ produce the same
geometric children, face marks, and refinement edges, with corresponding
B\"ansch states and AMP types.
\end{proposition}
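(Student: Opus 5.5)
The proof is a direct rule-by-rule comparison of \Cref{alg:banschbisecttet} with \Cref{alg:bisecttet}, under a fixed dictionary between the two states. Let $K$ be a tetrahedron with selected edge $[a,b]\in\mathcal G(K)$ and remaining vertices $c,d$. We regard $K$ as an AMP marked tetrahedron with $\refedge(K)=[a,b]$, the same intrinsic face marks, and flag set if and only if $K$ is $\mathrm{black}_1$. The refinement faces $[a,b,c]$ and $[a,b,d]$ carry the mark $[a,b]$ by definition of $\mathcal G(K)$, so \Cref{def:markedtetrahedron} is satisfied. We first check the dictionary $\mathrm{red}\leftrightarrow A$, $\mathrm{black}_0\leftrightarrow P_u$, $\mathrm{black}_1\leftrightarrow P_f$, and $\mathrm{unnamed}\leftrightarrow M,O$. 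This is a finite check on $(\mu_a,\mu_b)$. In the black case, $([a,w],[b,w])$ makes all four marks lie in the plane through $a,b,w$, which is type $P$. In the red case, $([a,c],[b,d])$ gives marks meeting both endpoints but not coplanar, which is type $A$. One occurrence of $[c,d]$ gives $M$, and two give $O$. The flag matches because a black tetrahedron is flagged exactly when its state is $\varepsilon_K=1$.

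Next, we compare the geometry and the face marks of the children. Both algorithms bisect $[a,b]$ at $z=\operatorname{mid}([a,b])$ and produce $[a,z,c,d]$ and $[z,b,c,d]$. Rule (R1) coincides with (B1), and (R2) coincides with (B2), since both mark a cut face by its edge opposite $z$. For the common new face, we distinguish the value of $\varepsilon_K$. If $\varepsilon_K=0$, both (R3) and (B3) give $[c,d]$; the AMP side applies because $K$ is then not of type $P_f$. If $\varepsilon_K=1$, then $K$ is of type $P_f$. In this case the child refinement edges are the inherited marks $\mu_a=[a,w]$ and $\mu_b=[b,w]$, so the common old endpoint is $w$. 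Hence (B3) gives $[z,w]$, which agrees with (R3).

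It then remains to check that the children's refinement edges agree and that their states correspond. By (B1), the AMP refinement edge of each child is the mark of its inherited face. By the proof of \Cref{prop:bansch-local}, that same edge is the unique global refinement edge of the child. For the states, (B4) sets the child flags exactly when $K$ is of type $P_u$, that is, exactly when $K$ is $\mathrm{black}_0$. We must show that (R4) gives the same result. This is the main obstacle. Rule (R4) sets $\varepsilon_{K_i}=1$ exactly when both $K$ and $K_i$ are black, whereas (B4) depends only on the type of the parent. We therefore need the transition of \Cref{prop:bansch-local}, namely $\mathrm{black}_0\to\mathrm{black}_1$, $\mathrm{black}_1\to\mathrm{red}$, $\mathrm{red}\to\mathrm{black}_0$, and $\mathrm{unnamed}\to\mathrm{black}_0$. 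From it we read off two facts. A black child of a black parent occurs precisely when the parent is $\mathrm{black}_0$. The children of a red or unnamed parent are black but get state $0$, because the parent is not black.

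If one wants independence from \Cref{prop:bansch-local} at this point, the transition can be verified directly from the explicit child marks computed in its proof. For instance, a $\mathrm{black}_1$ parent yields children with marks $[a,w]$, $[a,w']$, and $[z,w]$, where $w'$ denotes the vertex of $\{c,d\}$ other than $w$. Relative to the refinement edge $[a,w]$ these marks form a red configuration, which matches the AMP transition $P_f\to A$. With the dictionary shown to be preserved by one bisection, the correspondence of states and types follows, and the proposition is proved.
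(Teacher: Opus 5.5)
Your proposal is correct, but it takes a different route from the paper. The paper proves \Cref{prop:bansch-amp-onestep} by a one-line citation of the local equivalence in \cite[Section~2]{AMP2000}. You instead carry out the verification yourself:
\begin{enumerate}[label=\textnormal{(\roman*)},leftmargin=2.4em]
\item you fix the dictionary red, $\mathrm{black}_0$, $\mathrm{black}_1$, unnamed $\leftrightarrow A,P_u,P_f,M/O$, with the flag set exactly when $\varepsilon_K=1$;
\item you match (R1)--(R3) with (B1)--(B3), including the check that the common old endpoint in (B3) is the vertex $w$ of (R3);
\item you isolate the one point that is not a syntactic match, namely that the history rule (R4) and the parent-type rule (B4) assign the same child flags.
\end{enumerate}

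Your approach buys self-containment, and it makes explicit exactly what that last step needs. It needs only two facts: a $\mathrm{black}_0$ parent has black children, and a $\mathrm{black}_1$ parent has red, hence non-black, children. For a red or unnamed parent, (R4) yields state $0$ whatever the children are, so the transition $\mathrm{unnamed}\to\mathrm{black}_0$ is not actually needed.

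Your direct check of these two transitions from the explicit child marks is the cleaner way to close this step. In the paper, \Cref{prop:bansch-local} is itself justified partly by appeal to the AMP type transition. Checking the transitions directly therefore avoids any appearance of circularity in leaning on that proposition.

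The paper's citation buys brevity by deferring the whole case analysis to Arnold--Mukherjee--Pouly.
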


\begin{proof}
This is the local equivalence established in
\cite[Section~2]{AMP2000} under the identification above.
\end{proof}

\begin{theorem}[Deferred resolution]\label{thm:bansch-deferred}
Let $\mathcal B_0$ be an admissible initial B\"ansch marked mesh.  For every finite
or infinite B\"ansch refinement history from $\mathcal B_0$ generated by
selection strategies, there exists a resolution
$\rho\in\Sigma_{\mathrm B}$, depending on the history but fixed throughout
it, such that, at every complete-batch boundary, the B\"ansch execution and
the AMP execution from $\mathcal{T}_0^\rho$ using the same marking sets have the same
active tetrahedra, face marks, and corresponding states.  Every bisection
performed up to that boundary uses the same geometric edge in both
executions.
\end{theorem}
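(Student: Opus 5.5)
We construct $\rho$ retrospectively from the history and then prove the correspondence by induction over complete batches. For every $K\in\mathfrak D_0$ that is bisected at some point of the history, set $\rho(K)$ to the edge $e_{\mathcal A}(\mathcal H;K)\in\mathcal G(K)$ chosen at that first (and only) bisection of $K$. Because an initial tetrahedron is bisected at most once in the whole history, this is well defined even when the strategy is history dependent. For every $K\in\mathfrak D_0$ that is never bisected, choose $\rho(K)$ arbitrarily in $\mathcal G(K)$. This gives $\rho\in\Sigma_{\mathrm B}$ by \Cref{lem:bansch-resolution-space}. The choice is fixed for the entire history, but it may depend on the whole history, including future choices. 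This is harmless, since only the initial data of $\mathcal T_0^\rho$ depend on $\rho$.

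The induction hypothesis at the boundary of batch $n$ has three parts: the two active meshes coincide as sets of geometric tetrahedra, the intrinsic face marks coincide, and corresponding tetrahedra have matching states and types. The matching is red $\leftrightarrow A$, $\mathrm{black}_0\leftrightarrow P_u$, $\mathrm{black}_1\leftrightarrow P_f$, and unnamed $\leftrightarrow M,O$. In addition, every active B\"ansch tetrahedron other than an unbisected member of $\mathfrak D_0$ has as its unique global refinement edge the AMP refinement edge $\refedge(K)$. For unbisected $K\in\mathfrak D_0$, the AMP refinement edge is $\rho(K)$. At $n=0$ this holds by the construction of $\mathcal T_0^\rho$ and the type identification stated before \Cref{prop:bansch-amp-onestep}.

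For the induction step, note that both executions bisect the same set $\mathcal S$. At the start of a call this is the common marking set. Inside a call, it is the set of active tetrahedra with a hanging vertex, which depends only on the underlying geometric mesh, and that mesh agrees by hypothesis. Take $K\in\mathcal S$. If $K\notin\mathfrak D_0$, the B\"ansch edge is the unique member of $\mathcal G(K)$, which equals $\refedge(K)$. If $K\in\mathfrak D_0$, then this batch is its first bisection, so the B\"ansch edge is $e_{\mathcal A}(K)=\rho(K)=\refedge(K)$ by the definition of $\rho$. In either case the same geometric edge is used. \Cref{prop:bansch-amp-onestep} then gives identical children, face marks, refinement edges, and corresponding states. \Cref{prop:bansch-local} shows that every child has a unique global refinement edge, and this edge must equal its AMP refinement edge, which carries the extra hypothesis forward. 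Both executions perform the batch simultaneously from the same batch-start data, so processing tetrahedra one at a time inside a batch cannot cause divergence.

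The main difficulty is the exact bookkeeping of states. The B\"ansch state $\varepsilon$ depends on parent colour, while the AMP flag depends on parent type. Rule (R4) and rule (B4) together with the transition diagram need to agree, in the sense that $\varepsilon_{K_i}=1$ exactly when $K$ has type $P_u$. This holds because $\mathrm{black}_0$ children are black and correspond to $P_f$ under \Cref{prop:bansch-local}, whereas black$_1$ and red parents produce red and black$_0$ children respectively. I expect to settle this part by citing \Cref{prop:bansch-amp-onestep} rather than redoing the case analysis. For infinite histories the same argument applies: $\rho$ is still well defined, since each $K\in\mathfrak D_0$ is bisected at most once, and the induction runs over every finite prefix.
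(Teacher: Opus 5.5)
Your proposal is correct and follows the paper's proof. Both define $\rho$ retrospectively from the edge chosen at the unique bisection of each ambiguous initial tetrahedron, then induct over complete batches using three facts: hanging-vertex sets agree because the geometric meshes agree, descendants have unique global refinement edges, and the one-step B\"ansch--AMP correspondence holds. Your extra bookkeeping only makes explicit what the paper's terse proof leaves implicit: you carry the refinement-edge identification in the induction hypothesis and check that (R4) matches (B4). In that check, your phrase ``$\mathrm{black}_0$ children'' should read ``children of a $\mathrm{black}_0$ parent are $\mathrm{black}_1$, i.e.\ $P_f$'', and you should add that unnamed parents give unflagged $\mathrm{black}_0$ children.
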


\begin{proof}
For each $K\in\mathfrak D_0$ that is ever bisected, let $\rho(K)$ be the
edge selected at that event; choose either edge for an ambiguous initial
tetrahedron that is never bisected.  This defines one fixed resolution after
the history is known.

The initial geometries, face marks, and states agree under the identification
above.  By construction, the selected edge also agrees at the first
bisection of every ambiguous initial tetrahedron.  All descendants have a
unique global refinement edge by
\Cref{prop:bansch-local}; hence all later bisection edges agree automatically.
By \Cref{prop:bansch-amp-onestep}, the two histories have identical children,
marks, and states by induction.  Therefore the intermediate meshes have the
same hanging vertices, and the two conformity loops select the same
tetrahedra.  Induction over the batches proves the assertions at every
complete-batch boundary.
\end{proof}

\subsection{Uniform B\"ansch closure complexity}

A finite \emph{B\"ansch adaptive process} issued from an admissible
initial B\"ansch marked mesh $\mathcal B_0$ specifies an integer $L\ge0$ and, at each
completed step $\ell<L$, a marking set
$\M_\ell\subseteq\mathcal B_\ell$ and a selection strategy $\mathcal A_\ell$.
Both choices may depend on the preceding history; formally,
\[
 \mathcal B_{\ell+1}
 :=
 \BanschRefine_{\mathcal A_\ell}(\mathcal B_\ell,\M_\ell),
 \qquad \ell=0,\ldots,L-1.
\]

\begin{theorem}[Unconditional B\"ansch closure estimate]
\label{thm:bansch-closure}
Let $\mathcal B_0$ be an admissible initial B\"ansch marked mesh.  Every finite
B\"ansch adaptive process issued from $\mathcal B_0$ is well defined:
all calls terminate, and the resulting sequence satisfies
\begin{equation}\label{eq:bansch-closure}
 \card\mathcal B_L-\card\mathcal B_0
 \le C_{\mathrm B}(\mathcal B_0)
       \sum_{\ell=0}^{L-1}\card\M_\ell,
\end{equation}
where $C_{\mathrm B}(\mathcal B_0)<\infty$ depends only on $\mathcal B_0$.
\end{theorem}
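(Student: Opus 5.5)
The plan is to reduce everything to the AMP estimate of \Cref{thm:unconditional} through the deferred-resolution correspondence of \Cref{thm:bansch-deferred}, and then take a maximum over the finite resolution space of \Cref{lem:bansch-resolution-space}. The candidate constant is
\[
 C_{\mathrm B}(\mathcal B_0):=\max_{\rho\in\Sigma_{\mathrm B}}C_{\mathrm{clos}}(\mathcal{T}_0^\rho),
\]
which is finite because $\card\Sigma_{\mathrm B}=2^{\card\mathfrak D_0}$ and every $\mathcal{T}_0^\rho$ is an initial AMP marked mesh: it is conforming, its intrinsic face marks agree across shared faces, its flags are unset, and each selected refinement edge marks both faces containing it. The constant depends only on $\mathcal B_0$.

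\textbf{Step 1: well-definedness and termination.} This is the delicate part, since \Cref{thm:bansch-deferred} is stated for histories and speaks only at complete-batch boundaries. I would argue by induction on $\ell$ and, inside a call, by contradiction.

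Suppose some call $\BanschRefine_{\mathcal A_\ell}(\mathcal B_\ell,\M_\ell)$ did not terminate. Its execution would then be an infinite B\"ansch history consisting of infinitely many batches. Apply \Cref{thm:bansch-deferred} to this infinite history to obtain a single resolution $\rho$. At every batch boundary the B\"ansch execution coincides with the AMP execution from $\mathcal{T}_0^\rho$ with the same marking sets $\M_0,\ldots,\M_\ell$. In particular, the conformity loop of that AMP call would select the same nonempty hanging-vertex sets forever. This contradicts \Cref{thm:AMPtermination}, which guarantees that every \texttt{RefineAMP} call terminates.

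The induction hypothesis ensures that the previous meshes $\mathcal B_k$, $k\le\ell$, coincide with the AMP meshes $\mathcal{T}_k$. The same correspondence shows that the output is conforming, because the AMP output is conformingly marked. This in turn justifies the input hypothesis of the next call. The point to watch is that a resolution depending on the whole (possibly infinite) history is legitimate. It is, because $\rho$ is defined a posteriori from the first bisection of each ambiguous initial tetrahedron, and \Cref{thm:bansch-deferred} explicitly allows infinite histories.

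\textbf{Step 2: the estimate.} Once all $L$ calls terminate, the finite process is one finite history. \Cref{thm:bansch-deferred} gives a resolution $\rho$ such that $\mathcal B_\ell$ and $\mathcal{T}_\ell:=\texttt{RefineAMP}(\mathcal{T}_{\ell-1},\M_{\ell-1})$, started from $\mathcal{T}_0^\rho$, have the same active tetrahedra at each step boundary $\ell=0,\ldots,L$. Each step boundary is in particular a complete-batch boundary. Hence $\M_\ell\subseteq\mathcal{T}_\ell$, and this is a legitimate AMP adaptive sequence with the same marking sets, so $\card\mathcal B_\ell=\card\mathcal{T}_\ell$. Applying \Cref{thm:unconditional} to this sequence gives
\[
 \card\mathcal B_L-\card\mathcal B_0\le C_{\mathrm{clos}}(\mathcal{T}_0^\rho)\sum_{\ell}\card\M_\ell\le C_{\mathrm B}(\mathcal B_0)\sum_\ell\card\M_\ell.
\]

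The history dependence of both the marking sets and the strategies is harmless: \Cref{thm:unconditional} holds for every marking sequence, and $\rho$ ranges over a fixed finite set. I therefore expect the main obstacle to be only the careful bookkeeping in Step 1, which consists of the contradiction argument for nontermination and the inductive verification that each B\"ansch input satisfies the conformity hypothesis of \Cref{alg:banschrefine}.
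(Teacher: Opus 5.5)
Your proposal is correct and follows essentially the paper's own proof. Nontermination is excluded by applying \Cref{thm:bansch-deferred} to the infinite history and contradicting \Cref{thm:AMPtermination}; the estimate then follows from \Cref{thm:bansch-deferred} on the finite history, \Cref{thm:unconditional}, and taking $C_{\mathrm B}(\mathcal B_0)=\max_{\rho\in\Sigma_{\mathrm B}}C_{\mathrm{clos}}(\mathcal{T}_0^\rho)$ over the finite set of \Cref{lem:bansch-resolution-space}. Your added checks only make explicit what the paper leaves implicit: that each $\mathcal{T}_0^\rho$ is an initial AMP marked mesh, and the inductive conformity of each call's input.
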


\begin{proof}
Suppose that one conformity loop were infinite.  Append its batches to the
preceding finite history.  By \Cref{thm:bansch-deferred}, there exists a
resolution $\rho\in\Sigma_{\mathrm B}$, depending on this infinite history
but fixed throughout it, such that every prefix ending at a batch boundary
agrees with the AMP execution from $\mathcal{T}_0^\rho$.  The AMP completion loop
would therefore also have infinitely many batches,
contradicting \Cref{thm:AMPtermination}.  Hence every B\"ansch call terminates;
the same correspondence and \Cref{thm:AMPtermination} show that its output is
conforming.

For the resulting finite adaptive history, \Cref{thm:bansch-deferred} gives a
resolution $\rho\in\Sigma_{\mathrm B}$, depending on this history but fixed
throughout it, for which the B\"ansch and AMP histories have the same active
meshes at every batch boundary.  Applying \Cref{thm:unconditional} and then
taking the maximum over the finite set in
\Cref{lem:bansch-resolution-space} proves \eqref{eq:bansch-closure}.  Thus one
may take
\[
 C_{\mathrm B}(\mathcal B_0)
 :=
 \max_{\rho\in\Sigma_{\mathrm B}}
 C_{\mathrm{clos}}(\mathcal{T}_0^\rho)<\infty.
\]
The proof is complete.
\end{proof}

\begin{corollary}[Order-based admissible initialization]
\label{cor:bansch-order-initialization}
Every finite conforming tetrahedral mesh admits an initial face marking to
which \Cref{thm:bansch-closure} applies: impose a strict global order on its
geometric edges, mark each face by its maximal edge, and set
$\varepsilon_K=0$.
\end{corollary}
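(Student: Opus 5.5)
The plan is to check that the order-based marking is admissible in the sense of (F1)--(F2). Once that holds, \Cref{thm:bansch-closure} applies directly, since its only hypothesis is an admissible initial B\"ansch marked mesh with all states zero.

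Fix a strict total order on $\mathcal{E}(\mathcal{T}_0)$ and set $\mu_F:=\max\mathcal{E}(F)$ for every geometric face $F$. Condition (F2) holds automatically: the mark is defined from the geometric face alone, so both incident tetrahedra read the same edge. For (F1), take a tetrahedron $K$ and let $e^*:=\max\mathcal{E}(K)$. Each of the two faces of $K$ containing $e^*$ has $e^*$ among its edges. Since $e^*$ dominates every edge of $K$, it is the maximal edge of both faces. Hence $\mu_F=e^*$ for both, so $e^*\in\mathcal G(K)$ and $\mathcal G(K)\ne\varnothing$.

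Setting $\varepsilon_K=0$ on every initial tetrahedron then produces exactly an admissible initial B\"ansch marked mesh $\mathcal B_0$. \Cref{thm:bansch-closure} yields termination and the closure estimate, with constant $C_{\mathrm B}(\mathcal B_0)$.

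The one point needing care is that $\mathcal G(K)$ may contain a second edge. This would be an edge opposite $e^*$ that is the maximum of its two faces, which are exactly the faces not containing $e^*$. That is harmless: \Cref{lem:bansch-two-global} allows it, and \Cref{thm:bansch-closure} handles the resulting two-edge choice uniformly over all resolutions. The AMP initialization does not see this issue, because it fixes $e^*$ as the refinement edge. No step presents a genuine obstacle; the argument reduces to the maximality observation above.
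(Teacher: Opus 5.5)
Your proof is correct and follows the paper's own argument. The maximal edge of $K$ is maximal on both faces containing it, so it lies in $\mathcal G(K)$, and (F2) holds because the global order makes the mark depend only on the geometric face; your remark that an opposite second global edge may occur, and is covered by \Cref{thm:bansch-closure} uniformly over resolutions, is a correct and harmless addition.
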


\begin{proof}
The maximal edge of a tetrahedron is maximal on both faces containing it and
therefore belongs to $\mathcal G(K)$.  The marking is intrinsic because the
order is global.
\end{proof}

\section{Numerical experiments}\label{sec:numerics}
\raggedbottom

This section compares AMP with the DGS initialization~\cite{DGS} and the
weakly compatible AGK initialization~\cite{AGK2018}.  The B\"ansch routine is
not included as a separate numerical competitor.  Once its ambiguous choices
are fixed according to a corresponding AMP initialization,
\Cref{prop:bansch-amp-onestep,thm:bansch-deferred} give the same refinement history event by event,
so separate convergence curves would be redundant.  These computations are
illustrative: no theoretical comparison with DGS or AGK is asserted.

\subsection{Experimental setting}

We use two three-dimensional test domains.  Their initial meshes were
generated with Gmsh~\cite{GeuzaineRemacle2009}.  The experiments were run in
MATLAB R2024b.  For each initial
mesh and refinement rule we solve
\begin{equation}\label{eq:numerical-model}
 -\Delta u=1\quad\text{in }\Omega,
 \qquad
 u=0\quad\text{on }\partial\Omega
\end{equation}
with continuous piecewise affine finite elements.  The adaptive loop uses the
squared residual error indicators and D\"orfler marking with parameter
$\theta=0.3$, and stops at the first iterate with at least $10^5$ degrees of
freedom.  Because an exact solution is unavailable on these domains, the
figures report the residual estimator $\eta_\ell$, rather than an exact energy
error.

For a tetrahedron $K$, let
\[
 \gamma(K):=\frac{\diam(K)}{2r_K},
 \qquad
 \gamma(\mathcal{T}_\ell):=\max_{K\in\mathcal{T}_\ell}\gamma(K),
\]
where $r_K$ is the inradius.  We report the normalized shape parameter
$\gamma(\mathcal{T}_L)/\gamma(\mathcal{T}_0)$ and the realized closure ratio
\begin{equation}\label{eq:empirical-closure}
 C_{\mathrm{emp}}
 :=
 \frac{\card\mathcal{T}_L-\card\mathcal{T}_0}
      {\displaystyle\sum_{\ell=0}^{L-1}\card\M_\ell}.
\end{equation}
The latter is one history-dependent observation and hence only a lower bound
for a worst-case closure constant; it should not be confused with the
explicit upper bounds proved above.

\begin{figure}[htbp]
\centering
\includegraphics[width=\textwidth]{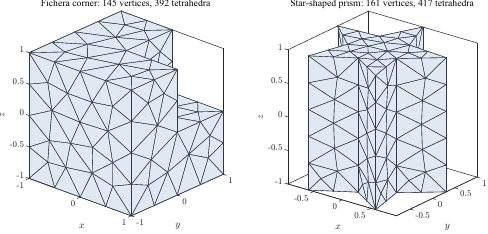}
\caption{The initial Gmsh-generated tetrahedral meshes.  The displayed
triangles are the boundary faces of the tetrahedralizations; the panel titles
give the numbers of vertices and tetrahedra.}
\label{fig:initial-meshes}
\end{figure}

\subsection{Fichera corner}

The domain is the cube $[-1,1]^3$ with the positive octant removed.  We use
the unstructured Gmsh mesh shown in the left panel of
\Cref{fig:initial-meshes}; its greedy DGS color parameter is
$N_{\mathrm{DGS}}=9$.  The
complete terminal statistics are given in \Cref{tab:fichera-results}.

\begin{table}[H]
\caption{Fichera-corner results.  Here $L$ is the number of adaptive steps and $N_L$ is the number of degrees of freedom.}
\label{tab:fichera-results}
\centering
\small
\begin{tabular}{@{}lrrrrrr@{}}
\toprule
 Method & $L$ & $N_L$ & $\card\mathcal{T}_L$
& $\gamma(\mathcal{T}_L)/\gamma(\mathcal{T}_0)$ & $C_{\mathrm{emp}}$
& $10\eta_L$ \\
\midrule
 AMP      & 21 & 101206 & 524126 & 2.060 & 2.786 & $1.3017$ \\
      DGS      & 22 & 119158 & 610731 & 3.872 & 2.912 & $1.5883$ \\
      AGK      & 21 & 110486 & 565448 & 3.860 & 2.895 & $1.4699$ \\
\bottomrule
\end{tabular}
\end{table}

\begin{figure}[H]
\centering
\includegraphics[width=.95\textwidth]{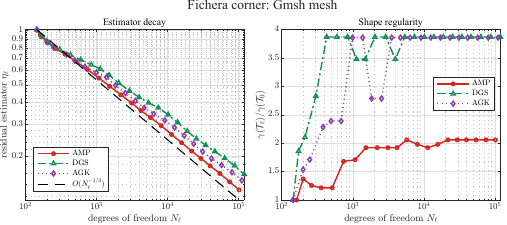}
\caption{Fichera corner.  Left: residual estimator versus degrees of freedom;
the dashed black line has reference slope $-1/3$.  Right: normalized
shape parameter along the adaptive histories.}
\label{fig:fichera-results}
\end{figure}

\subsection{Star-shaped prism}

The second domain is the extrusion over $-1\le z\le1$ of a ten-vertex star
whose alternating outer and inner radii are $1$ and $0.45$.  Gmsh's Delaunay
three-dimensional algorithm provides the mesh shown in
\Cref{fig:initial-meshes}; its greedy DGS color parameter is
$N_{\mathrm{DGS}}=7$.

\begin{table}[H]
\caption{Star-prism numerical results.}
\label{tab:starprism-results}
\centering
\small
\begin{tabular}{@{}lrrrrrr@{}}
\toprule
 Method & $L$ & $N_L$ & $\card\mathcal{T}_L$
& $\gamma(\mathcal{T}_L)/\gamma(\mathcal{T}_0)$ & $C_{\mathrm{emp}}$
& $10^2\eta_L$ \\
\midrule
 AMP      & 21 & 108876 & 572380 & 2.058 & 2.815 & $6.1657$ \\
          DGS      & 22 & 104747 & 543771 & 4.554 & 3.053 & $8.5609$ \\
          AGK      & 22 & 108077 & 557750 & 4.554 & 2.985 & $7.5111$ \\
\bottomrule
\end{tabular}
\end{table}

\begin{figure}[H]
\centering
\includegraphics[width=.95\textwidth]{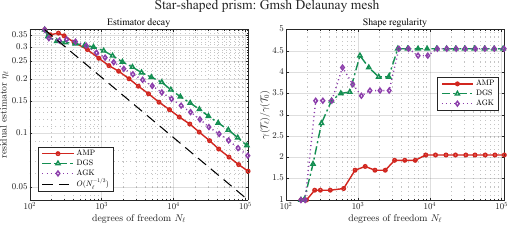}
\caption{Star-shaped prism.  Left: residual estimator versus degrees of
freedom; the dashed black line has reference slope $-1/3$.  Right:
normalized shape parameter along the adaptive histories.}
\label{fig:starprism-results}
\end{figure}

\subsection{Sensitivity to the initial vertex numbering}

To separate geometric effects from dependence on the input labels, we keep
each physical initial mesh fixed and repeat the experiment after five
independent random permutations of its vertex numbers. The
coordinates and physical tetrahedra are unchanged.  Trial zero uses the
original numbering.  All six trials use the same model problem, marking
parameter, and stopping criterion as above.

For this subsection, $\refedge(K)$ denotes the initial refinement edge assigned
to $K$ by the method and initialization under consideration.  Define its mean
normalized length by
\begin{equation}\label{eq:mean-refinement-edge-ratio}
 \overline r_0
 :=\frac{1}{\card\mathcal{T}_0}\sum_{K\in\mathcal{T}_0}
 \frac{|\refedge(K)|}{\diam(K)}.
\end{equation}
Every entry in \Cref{tab:numbering-sensitivity} is the mean over the six
numberings followed by the range in brackets.

\begin{table}[htbp]
\caption{Sensitivity to the initial vertex numbering.  Each entry is the
six-numbering mean followed by $[\min,\max]$.}
\label{tab:numbering-sensitivity}
\centering
\small
\setlength{\tabcolsep}{2pt}
\begin{tabular}{@{}lcccc@{}}
\toprule
\multicolumn{5}{c}{Fichera corner} \\
\cmidrule(lr){1-5}
Method & $\overline r_0$ & $C_{\mathrm{emp}}$
& Shape ratio & $10\eta_L$ \\
\midrule
AMP & $\begin{array}{c}1.0000\\ {[1.0000,1.0000]}\end{array}$
 & $\begin{array}{c}2.7704\\ {[2.7386,2.7864]}\end{array}$
 & $\begin{array}{c}2.0596\\ {[2.0596,2.0596]}\end{array}$
 & $\begin{array}{c}1.2616\\ {[1.1792,1.3035]}\end{array}$ \\
DGS & $\begin{array}{c}0.8156\\ {[0.7987,0.8347]}\end{array}$
 & $\begin{array}{c}2.9423\\ {[2.9112,2.9774]}\end{array}$
 & $\begin{array}{c}3.6808\\ {[3.0132,4.4001]}\end{array}$
 & $\begin{array}{c}1.6339\\ {[1.5883,1.6541]}\end{array}$ \\
AGK & $\begin{array}{c}0.8091\\ {[0.8005,0.8206]}\end{array}$
 & $\begin{array}{c}3.0126\\ {[2.8950,3.0612]}\end{array}$
 & $\begin{array}{c}3.9019\\ {[2.4648,5.2520]}\end{array}$
 & $\begin{array}{c}1.6609\\ {[1.4699,1.7816]}\end{array}$ \\
\addlinespace
\multicolumn{5}{c}{Star-shaped prism} \\
\cmidrule(lr){1-5}
Method & $\overline r_0$ & $C_{\mathrm{emp}}$
& Shape ratio & $10^2\eta_L$ \\
\midrule
AMP & $\begin{array}{c}1.0000\\ {[1.0000,1.0000]}\end{array}$
 & $\begin{array}{c}2.8145\\ {[2.8145,2.8145]}\end{array}$
 & $\begin{array}{c}2.0577\\ {[2.0577,2.0577]}\end{array}$
 & $\begin{array}{c}6.1657\\ {[6.1657,6.1657]}\end{array}$ \\
DGS & $\begin{array}{c}0.8023\\ {[0.7880,0.8209]}\end{array}$
 & $\begin{array}{c}3.0556\\ {[2.9963,3.1217]}\end{array}$
 & $\begin{array}{c}4.3290\\ {[4.2436,4.5536]}\end{array}$
 & $\begin{array}{c}8.3929\\ {[7.8744,8.8280]}\end{array}$ \\
AGK & $\begin{array}{c}0.7959\\ {[0.7694,0.8210]}\end{array}$
 & $\begin{array}{c}3.0771\\ {[2.9851,3.1395]}\end{array}$
 & $\begin{array}{c}4.3265\\ {[4.2436,4.5536]}\end{array}$
 & $\begin{array}{c}8.1806\\ {[7.5111,8.6608]}\end{array}$ \\
\bottomrule
\end{tabular}
\end{table}

The test isolates the principal effect of the geometry-aware AMP
initialization.  For every initial tetrahedron, not merely on average, AMP
selects a diameter-carrying edge; hence the ratio in
\eqref{eq:mean-refinement-edge-ratio} is identically one under every
permutation.  DGS and AGK need not select a longest edge, and their mean ratios
vary with the numbering.  AMP also has an invariant final shape ratio on both
meshes and completely invariant terminal data on the star prism.  On the
Fichera mesh, equal-length tie breaking changes some AMP refinement histories,
but its observed closure and shape statistics remain markedly more stable
than those of DGS and AGK\@.

\subsection{Observed behavior}

Across the two initial meshes, the empirical closure ratios lie between
$2.786$ and $3.053$.  Direct implementation checks of B\"ansch against AMP
produced exactly the same recorded history on the star-prism mesh and only the
small difference caused by a distinct resolution of the initial ambiguity on
the Fichera mesh.

For these initializations, the final AMP shape ratios lie between $2.058$ and
$2.060$, whereas the DGS and AGK ratios lie between $3.860$ and $4.554$.  All
three displayed methods reduce the estimator steadily, but at comparable
terminal degrees of freedom AMP gives the smallest estimator in these tests.

The numbering-sensitivity test supports a geometry-based explanation for this
behavior.  AMP's strict global edge order uses Euclidean length as the primary
key, with vertex indices only breaking ties, whereas the DGS greedy coloring
and the AGK vertex ordering used here are combinatorial.  Cutting a
diameter-carrying edge can reduce the local scale more directly and may limit
unfavorable shape excursions.  Moreover, the AMP values
$C_{\mathrm{emp}}=2.786$ and $2.815$ for the original numberings are slightly
smaller than the corresponding DGS and AGK values $2.895$--$3.053$,
suggesting that less conforming completion was required in these particular
histories.

\providecommand{\bysame}{\leavevmode\hbox to3em{\hrulefill}\thinspace}
\providecommand{\MR}{\relax\ifhmode\unskip\space\fi MR }
\providecommand{\MRhref}[2]{%
  \href{http://www.ams.org/mathscinet-getitem?mr=#1}{#2}
}
\providecommand{\href}[2]{#2}

\end{document}